\definecolor{blou}{rgb}{0.66, 0.44, 0.98}
\pgfplotsset{width=7cm, compat=1.10}
\newtheoremstyle{pourdef}
  {10pt}
  {10pt}
  {}
  {}
  {\bf}
  {.~}
  { }
  {}
\newtheoremstyle{pourth}{10pt}{10pt}{\em}{}{\sc}{.~}{ }{}
\newtheoremstyle{pourpp}{10pt}{10pt}{\em}{}{\bf \em}{.~}{ }{}
\newtheoremstyle{pourrk}{10pt}{10pt}{}{}{\em}{.~}{ }{}
\newtheoremstyle{pourlm}{10pt}{10pt}{\em}{}{\bf \em}{.~}{ }{}
\newtheoremstyle{pourco}{10pt}{10pt}{\em}{}{\bf \em}{.~}{ }{}
\definecolor{black}{cmyk}{1,1,1,1}
\definecolor{colordef}{rgb}{0.2,0.5,0.07} 
\definecolor{colorprop}{rgb}{0.2,0.1,0.5}
\definecolor{color1}{rgb}{0.6,0.4,0.8}
\definecolor{color1}{rgb}{0.9,0.6,0.4}
\definecolor{color1}{rgb}{0.36, 0.54, 0.66}
\definecolor{color2}{rgb}{0.2,0.1,0.5}
\definecolor{color2}{rgb}{0.91, 0.84, 0.42}
\definecolor{color2}{rgb}{0.87, 0.36, 0.51}
\definecolor{color2}{rgb}{0.4, 0.69, 0.2}
\definecolor{color2}{rgb}{0.84, 0.23, 0.24}
\definecolor{color3}{rgb}{1.0, 0.13, 0.32}
\definecolor{color3}{rgb}{0.54, 0.17, 0.89}
\definecolor{babypink}{rgb}{0.96, 0.76, 0.76}
\definecolor{babyblue}{rgb}{0.76, 0.76, 0.95}
\renewenvironment{proof}[1][\proofname]{\par
  \normalfont
  \trivlist
  \item[\hskip\labelsep\itshape
    #1.]\ignorespaces
}{%
  \endtrivlist
}
\newtheorem{lem}{Lemma}[section]
\newtheorem{cor}[lem]{Corollary}
\newtheorem{thm}[lem]{Theorem}
\newtheorem{fact}[lem]{Fact}
\newtheorem{prop}[lem]{Proposition}
\theoremstyle{definition}
\newtheorem{definition}[lem]{Definition}
\newtheorem{ex}[lem]{Example}
\newtheorem{rmk}[lem]{Remark}
\newtheorem{question}[lem]{Question}
\numberwithin{equation}{section}
\newcommand{\SB}{\mathbf{Sb}}
\newcommand{\Ein}{\operatorname{Ein}}
\newcommand{\Grassd}{\operatorname{Gr}_p (\mathbb{R}^{2p})}
\newcommand{\GRK}
{\operatorname{Gr}_r(\mathbb{K}^{2r})}
\newcommand{\Lag}{\operatorname{Lag}}
\def\O{\Omega}
\newcommand{\Diams}{\mathbf{D}}
\newcommand{\I}{\mathbf I}
\newcommand{\J}{\mathbf J}
\newcommand{\Fl}{\mathcal{F}}
\newcommand{\Phot}{\Lambda}
\newcommand{\Affstd}{\mathbb{A}}
\newcommand{\MRr}{\mathcal{R}}
\newcommand{\Phoc}{\mathcal{P}_N}
\def\Rr{\operatorname{Extr}_{\MRr}}
\newcommand{\Contlam}{\mathcal{X}}
\newcommand{\hyp}{\operatorname{Z}}
\newcommand{\uu}{\mathfrak{u}}
\newcommand{\g}{\mathfrak{g}}
\newcommand{\aaa}{\mathfrak{a}}
\newcommand{\Mat}{\operatorname{Mat}}
\newcommand{\tr}{\mathsf{t}}
\newcommand{\FS}{\operatorname{\Delta}}
\newcommand{\Kf}{\mathbb{K}}
\newcommand{\Cf}{\mathbb{C}}
\newcommand{\Rf}{\mathbb{R}}
\newcommand{\Hf}{\mathbb{H}}
\newcommand{\isl}{\mathsf{i}}
\newcommand{\FC}{\mathsf{F}}
\newcommand{\dev}{\mathsf{dev}}
\def\b{\mathbf{b}}
\newcommand{\ro}{\mathsf{k}}
\newcommand{\kob}{K}
\newcommand{\pol}{\mathsf{Q}}
\newcommand{\plong}{\tau}
\newcommand{\Ad}{\operatorname{Ad}}
\newcommand{\ad}{\operatorname{ad}}
\newcommand{\leng}{\mathsf{len}_{\O}}
\newcommand{\plongsl}{\operatorname{j}}
\newcommand{\pstd}{\operatorname{pr}_{\mathsf{std}}}
\newcommand{\E}{\operatorname{E}}
\newcommand{\F}{\operatorname{F}}
\newcommand{\He}{\operatorname{H}}
\newcommand{\nG}{\mathsf{n}(G)}
\newcommand{\e}{\varepsilon}
\newcommand{\Iso}{\mathrm{Isom}}
\newcommand{\Aut}{\mathsf{Aut}}
\newcommand{\hol}{\mathsf{hol}}
\newcommand{\HTT}{\operatorname{HTT}}
\newcommand{\SL}{\operatorname{SL}}
\newcommand{\PO}{\operatorname{PO}}
\newcommand{\GL}{\operatorname{GL}}
\newcommand{\PGL}{\operatorname{PGL}}
\newcommand{\SU}{\operatorname{SU}}
\newcommand{\Sp}{\operatorname{Sp}}
\newcommand{\spp}{\mathfrak{sp}}
\newcommand{\SO}{\operatorname{SO}}
\newcommand{\soo}{\mathfrak{so}}
\renewcommand{\qed}{\hfill\square}
\newcommand{\opp}{-}
\def\@setcopyright{}
\def\serieslogo@{}
\def\@adminfootnotes{}
\newcommand{\tpitchfork}{%
  \vbox{
    \baselineskip\z@skip
    \lineskip-.52ex
    \lineskiplimit\maxdimen
    \m@th
    \ialign{##\crcr\hidewidth\smash{$-$}\hidewidth\crcr$\pitchfork$\crcr}
  }%
}
\title[Proper almost-homogeneous domains in positive flag manifolds]{Rigidity of proper almost-homogeneous domains in positive flag manifolds}
\author{Blandine Galiay}
\date{}
\begin{document}

\begin{abstract}
   We show that, inside the Shilov boundary of any given Hermitian symmetric space of tube type, there is, up to isomorphism, only one proper domain such that every point on its boundary belongs to the closure of an orbit under its automorphism group. This gives a classification of all closed proper manifolds locally modelled on such Shilov boundaries, and provides a positive answer, in the case of flag manifolds admitting a $\Theta$-positive structure, to a rigidity question of W. van Limbeek and A. Zimmer.
\end{abstract}

\maketitle

\section{Introduction}

The general context of this paper is that of geometric structures. A manifold $M$ admits a \emph{$(G,X)$-structure}, where $G$ is a real Lie group and $X$ is a $G$-homogeneous space, if there exists an atlas of charts on $M$ with values in $X$ whose changes of coordinates are given by elements of $G$. The manifold $M$ endowed with this structure is called a \emph{$(G,X)$-manifold}. Geometric structures have been the object of deep work and fundamental theories since Klein's Erlangen program, including Poincaré--Koebe's uniformization theorem, Teichmüller theory, Thurston's geometrization program and Perelman's theorem; see \cite{goldman2010locally}.

In this paper, we consider more specifically the case where the manifold $M$ is closed, the group $G$ is semisimple and the space $X$ is a \emph{flag manifold} of $G$, i.e.\ a smooth projective variety on which $G$ acts transitively. There exist many examples of such $(G,X)$-manifolds $M$, including quotients of domains of discontinuity of Anosov representations \cite{frances2005lorentzian, guichard2012anosov, kapovich2017dynamics}. However, it is in general difficult to classify all closed $(G,X)$-manifolds. To obtain classification results, one strategy is to assume, for instance, that the manifold $M$ is a quotient $\O / \Gamma$ of a ``not too large" (i.e.\ \emph{proper}) domain $\O$ of $X$ by a discrete subgroup $\Gamma$ of $G$ acting properly discontinuously and freely on $\O$. We say that the manifold $M$ is \emph{proper}.

In the case where $G = \PGL(n, \Rf)$ is the projective linear group and $X = \mathbb{P}(\Rf^{n})$ is the real projective space, this leads to the theory of \emph{divisible convex sets},  whose main results and ideas are outlined in Section~\ref{sect_divisible_convex} below. This rich theory motivates general questions concerning ``small" divisible open sets in arbitrary flag manifolds.

With this in mind, in this paper we investigate closed proper $(G,X)$-manifolds where~$G$ is a Lie group of Hermitian type of tube type and $X$ is the Shilov boundary of the symmetric space of $G$ (in particular $X$ is a flag manifold of $G$); see Table \ref{table_shilov_bnds}. This reduces to the study of proper divisible domains of $X$. Our main result (Theorem~\ref{thm_main}) implies that these domains are actually subject to a strong rigidity, contrasting with the flexibility that can be observed in the real projective case.

\subsection{Divisible convex sets in real projective geometry} \label{sect_divisible_convex}
Convex projective geometry generalizes real hyperbolic geometry. The objects of study are the open sets~$\O \subset \mathbb{P}(\mathbb{R}^{n})$ that are \emph{properly convex}, meaning they are bounded and convex in an affine chart. The group of elements of $\PGL(n, \mathbb{R})$ preserving $\O$ is called the \emph{automorphism group} of $\O$ and is denoted by $\Aut(\O)$.  The set $\O$ is \emph{divisible} if there exists a discrete subgroup of $\Aut(\O)$ that acts cocompactly on $\O$, and \emph{quasi-homogeneous} if~$\Aut(\O)$ acts cocompactly on~$\O$. It is said to be \emph{almost-homogeneous} if its f\emph{ull orbital limit set}, that is, the union of the accumulation points of all orbits of $\Aut(\O)$, is equal to its boundary. Note that divisibility implies quasi-homogeneity, which implies almost-homogeneity.

Properly convex open subsets of the projective space that are divisible are called \emph{divisible convex sets} and admit a rich theory, which was initiated in the 1960's with work of Benzecri \cite{benzecri1960varietes}. This theory has since then been developed by numerous authors (see e.g.\ \cite{vinberg1965structure, goldman1990convex, cooper2015convex}), in particular by Benoist in the early 2000's  \cite{benoist2000automorphismes, benoist2003convexes, benoist2005convexes, benoist2006convexes}. The strictly convex case is well understood, while the  nonstrictly convex case is still in expansion \cite{islam2019rank, choi2020convex, zimmer2020higher, blayac2021boundary}. See \cite{benoist2008survey} for a survey on this theory and more references.

By \cite{vey1970automorphismes, benoist2001convexes}, the theory of divisible convex sets reduces to the case where $\O$ is \emph{irreducible}. In this case, either $\O$ is symmetric, i.e., there exists a symmetry in $\Aut(\O)$ at every point of $\O$, or $\Aut(\O)$ is a discrete Zariski-dense subgroup of $\PGL(n, \mathbb{R})$; see \cite{vinberg1965structure, koecher1999minnesota, benoist2003convexes}.

There is a list of all irreducible symmetric domains in any dimension \cite{koecher1999minnesota}. All domains of this list identify with Riemannian symmetric spaces, and their isometry group and automorphism group (as properly convex open subsets in projective space) coincide. The simplest example is the real hyperbolic space $\mathbb{H}^{n-1}$ embedded in the projective space $\mathbb{P}(\mathbb{R}^n)$ via the Klein model. In this case, we are in the framework of real hyperbolic geometry. 

There exists cocompact lattices $\Gamma$ of $\PO(n-1,1)$ that admit nontrivial deformations into $\PGL(n, \mathbb{R})$ \cite{johnson1987deformation}. By Koszul's openness Theorem~\cite{koszul1968deformations}, the image of any small deformation of~$\Gamma$ in~$\PGL(n, \mathbb{R})$ still divides a divisible convex set, which can be nonsymmetric. There also exist several constructions of nonsymmetric irreducible divisible convex sets; the first examples were constructed by Kac--Vinberg in dimension 2, using Coxeter groups \cite{vinberg1967quasi}. 
Kapovich later constructed examples that are not quasi-isometric to any symmetric space, and whose automorphism group is discrete and Gromov-hyperbolic, in any dimension $n-1\geq 4$ \cite{kapovich2007convex}. Examples with  a discrete and non-Gromov-hyperbolic automorphism group were constructed in projective dimension 3,4,5,6 by Benoist \cite{benoist2006convexes} and 3 by Ballas--Danciger--Lee \cite{ballas2018convex}, and recently, in any projective dimension $n-1\geq 3$ by Blayac--Viaggi \cite{blayac2023divisible}.

Even though it has recently been proved that divisible convex sets in the projective space are subject to a certain rigidity \cite{zimmer2020higher}, the diversity of nonsymmetric examples highlights the importance of general results concerning them, beyond those on cocompact actions on Riemannian symmetric spaces.

Other examples of properly convex sets arise by imposing weaker assumptions on the action of the automorphism group: certain properly convex sets in $\mathbb{P}(\mathbb{R}^n)$ admit actions by discrete groups $\Gamma \leq \PGL(n, \mathbb{R})$ with finite covolume, without being divisible or even quasi-homogeneous. Such examples are almost-homogeneous \cite{ballas2020properly}.

\subsection{Flag manifolds and rigidity}\label{sect_flag_mfd} The real projective space is an example of a \emph{flag manifold}, i.e.\ a compact homogeneous space $G/P$ where $G$ is a noncompact real semisimple linear Lie group and $P$ a parabolic subgroup of $G$. The question of whether the theory of divisible convex sets generalizes to other flag manifolds than the real projective space was asked by W. van Limbeek and A. Zimmer. There are natural notions of convexity, properness, divisibility and quasi-homogeneity for domains (i.e.\ connected open sets) in flag manifolds, generalizing those in the projective space (see Section~\ref{sect_dual_proper_domain} and \cite{zimmer2018proper}). By ``natural", we mean that the following property, already true in the projective space (by \cite{kobayashi1984projectively}), remains true in general flag manifolds: proper quasi-homogeneous domains are convex.

One can first consider a particular class of flag manifolds, those that identify with a symmetric space of compact type, with isometry group a maximal compact subgroup of $G$. These flag manifolds are called \emph{Nagano spaces}, \emph{extrinsinc symmetric spaces} or \emph{symmetric $R$-spaces} depending on the authors, and their list is known \cite{nagano1965transformation}. There is a wealth of literature about these manifolds, starting with
\cite{kobayashi1964filtered, takeuchi1965cell, takeuchi1968minimal, makarevivc1973open}. Nagano has classified symmetric spaces $\mathbb{X}$ of noncompact type that embed into their compact dual, which turn out to be Nagano spaces. Moreover, the image of~$\mathbb{X}$ in its dual is a proper domain. 

Nagano's list provides examples of divisible convex sets in flag manifolds (beyond the projective space), but all these examples are symmetric. The issue is to determine if there exist nonsymmetric examples, as in the projective case. This is a general question, which applies to any flag manifold:

\begin{question}[{\cite{van2019rigidity}}] \label{question_Lim_Zim}
    Given a noncompact real semisimple Lie group $G$, and a parabolic subgroup $P$ of $G$, are all proper divisible domain of $G/P$ symmetric?
\end{question}

One could also ask the same question, replacing ``divisible" with ``almost-homogeneous". A positive answer in the almost-homogeneous case would imply a positive answer to Question \ref{question_Lim_Zim}. 

First note that the study of proper almost-homogeneous domains reduces to the case where $G$ is simple, by the following fact.

\begin{lem}{\cite[Thm 1.7]{zimmer2018proper}}\label{thm_zim_semisimple}
Let $G$ be a semisimple Lie group $G$ with trivial center and no compact factors, and write $G = G_1 \times \cdots \times G_k$ where $G_i$ is a noncompact simple Lie group for all $1 \leq i \leq k$. For any parabolic subgroup $P$ of $G$, there exist parabolic subgroups $P_i \leq G_i$ such that $P = P_1 \times \cdots \times P_k$. Now let $\O \subset G/P$ be  a proper almost-homogeneous domain. Then there exist proper almost-homogeneous domains $\O_i \subset G_i /P_i$ such that $\O = \O_1 \times \cdots \times \O_k$.
\end{lem}
Although Lemma~\ref{thm_zim_semisimple} is proved in \cite[Thm 1.7]{zimmer2018proper} for quasi-homogeneous domains, their proof holds in the quasi-homogeneous case. We give it in Section~\ref{sect_group_aut_omega}.

Question \ref{question_Lim_Zim} admits a negative answer in the case where $G = \SO(n,1)$ for $n \geq 3$, and~$P$ is a minimal parabolic subgroup of $G$. In this case, the flag manifold $G/P$ is the conformal sphere and it admits proper divisible domains that are nonsymmetric (here ``proper" simply means that the complement of the domain has nonempty interior). For instance, the limit set of a representation obtained by a deformation of the natural inclusion of a cocompact lattice in $\SO(2,1)$ into $\SO(3,1)$ (called \emph{quasi-Fuchsian}) is a quasi-circle, and thus separates the two-sphere $G/P$ into two proper, divisible domains. The case where $G$ is the projective linear group $\PGL(n+1, \mathbb{R})$ and $G/P$ is the projective space $\mathbb{P}(\mathbb{R}^{n+1})$ also has a negative answer (see Section~\ref{sect_divisible_convex}). However, in other flag manifolds $G/P$, one observes more rigidity, and Zimmer makes the conjecture that any divisible convex domain of~$G/P$ is homogeneous \cite[Conj.\ 2.6]{zimmer2018proper}. 

Question \ref{question_Lim_Zim} has a complete answer for flag manifolds $G/P$ where $P$ is a nonmaximal proper parabolic subgroup:

\begin{fact}[{\cite[Thm 1.5]{zimmer2018proper}}]\label{th_zim_G/P}
Let $G$ be a noncompact real simple Lie group and~$P \leq G$ a nonmaximal proper parabolic subgroup. Then there are no proper almost-homogeneous domains in $G/P$.
\end{fact}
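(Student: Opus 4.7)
My plan is to proceed by contradiction and reduce, via the fibration structure associated to a proper parabolic strictly between $P$ and $G$, to a statement on a strictly smaller flag manifold. Suppose for contradiction that $\Omega \subset G/P$ is a proper quasi-homogeneous domain, and $P$ is proper and non-maximal. Then there exists a parabolic subgroup $Q$ with $P \subsetneq Q \subsetneq G$. Consider the canonical $G$-equivariant projection $\pi : G/P \to G/Q$: it is a fiber bundle whose fiber $F \cong Q/P$ is a positive-dimensional flag manifold of the Levi subgroup $L_Q \leq Q$. The idea is to use $\pi$ to extract, from $\Omega$, a proper quasi-homogeneous domain in $F$, then iterate and induct on dimension.

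First I would show that the image $\pi(\Omega) \subset G/Q$ is a proper quasi-homogeneous domain in $G/Q$. Properness of $\Omega$ in $G/P$ is witnessed by a nonempty open subset of the dual flag manifold consisting of points transverse to every point of $\Omega$; applying the natural $G$-equivariant projection between dual flag manifolds produces such an open set witnessing properness of $\pi(\Omega)$. Cocompactness transfers directly: if $K \subset \Omega$ is compact with $\Aut(\Omega) \cdot K = \Omega$, then $\pi(K)$ is compact with $\Aut(\Omega) \cdot \pi(K) = \pi(\Omega)$. Iterating if needed, I may also replace $Q$ by a maximal parabolic containing it. The next, harder, step is to descend to a proper quasi-homogeneous domain in the fiber: pick $y_0 \in \pi(\Omega)$, set $\Omega_{y_0} := \pi^{-1}(y_0) \cap \Omega$, and try to show it is a proper quasi-homogeneous domain in $F$, viewed as a flag manifold of the semisimple part of $L_Q$. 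Properness of $\Omega_{y_0}$ in $F$ follows analogously from transversality within the fiber, intersecting the dual open set with the fiber of the corresponding dual fibration. Once this step is in place, Fact~\ref{thm_zim_semisimple} lets me split $L_Q$ into simple factors and induct on $\dim(G/P)$, since the fiber $F$ has strictly smaller dimension than $G/P$.

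The principal obstacle is extracting a cocompact action on the fiber $\Omega_{y_0}$. The stabilizer $\Aut(\Omega)_{y_0}$ of $y_0$ in $\Aut(\Omega)$ could a priori be much too small: nothing formally forces it to act cocompactly on $\Omega_{y_0}$, because two different elements of $\Aut(\Omega)$ can carry a fiber to itself without fixing $y_0$. To overcome this I would fix a compact fundamental set $K \subset \Omega$ for $\Aut(\Omega)$ and analyse the set $\pi^{-1}(y_0) \cap \Aut(\Omega) \cdot K$: its image under $\Aut(\Omega)_{y_0}$-action should cover $\Omega_{y_0}$ by a compact core, provided $y_0$ is chosen in a suitable Zariski-open subset of $\pi(\Omega)$ so that the induced fiber action is nondegenerate. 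This compactness transfer is the delicate heart of the proof; I would expect it to rely on the rigidity of parabolic subgroups of $G$, on a careful dynamical analysis of how $\Aut(\Omega)$-translates of $K$ meet the fibers of $\pi$, and possibly on a Baire-type argument to produce a well-behaved basepoint $y_0$.
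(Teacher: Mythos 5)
The paper does not prove this statement at all: it is imported verbatim as a ``Fact'' from Zimmer's paper, so there is no internal argument to compare against. Assessed on its own terms, your fibration-and-induction plan has a fatal logical gap that is independent of the technical difficulty you flag at the end, and which would persist even if every intermediate reduction you sketch were carried out in full.

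The problem is that your induction has nowhere to bottom out. You reduce from $\Omega\subset G/P$ to a proper quasi-homogeneous domain $\pi(\Omega)\subset G/Q$ and (optimistically) a proper quasi-homogeneous domain $\Omega_{y_0}$ in the fiber $Q/P$, but neither of these conclusions is absurd: flag manifolds defined by \emph{maximal} parabolic subgroups genuinely do carry proper quasi-homogeneous domains --- the diamonds $\Diams(p,q)\subset\SB(G)$ studied throughout this very paper, and more generally the symmetric domains in Nagano's list. Write $P=P_\Theta$ with $|\Theta|\geq 2$. In the crucial case $|\Theta|=2$, taking $Q$ to be a maximal parabolic containing $P$ makes the base $G/Q$ a flag manifold of maximal type, while the fiber $Q/P\cong L_Q/(L_Q\cap P)$ is also cut out by a singleton of simple roots of $L_Q$, hence again a flag manifold of maximal type. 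Your induction hypothesis (``no proper quasi-homogeneous domains for nonmaximal $P$'') applies to neither, and neither admits the desired contradiction; for example both could perfectly well be diamonds. The same obstruction resurfaces at the last stage of the induction for any $|\Theta|$. So even granting the delicate fiber-cocompactness step, the argument establishes nothing.

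The additional gap you already identified --- transferring cocompactness of $\Aut(\Omega)\curvearrowright\Omega$ to an action on a single fiber $\Omega_{y_0}$ --- is also genuinely unresolved: a stabilizer $\Aut(\Omega)_{y_0}$ can be trivial, and the Baire-type argument you gesture at would have to contend with the fact that different elements of $\Aut(\Omega)$ carrying translates of a compact fundamental set into $\pi^{-1}(y_0)$ need not return to that fiber. What a correct proof must supply is a structural incompatibility, specific to nonmaximal $P$, between dual convexity, the boundary of $\Omega$, and a cocompact subgroup of $G$; that incompatibility is exactly the ingredient your reduction strategy never produces, and it cannot be conjured by dimension counting alone.
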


As for Lemma~\ref{thm_zim_semisimple},  Fact~\ref{th_zim_G/P} is proved in~\cite[Thm 1.5]{zimmer2018proper} for proper quasi-homogeneous domains, but their proof still holds for almost-homogeneous domains.

Flag manifolds defined by maximal proper parabolic subgroups have also been studied: Question \ref{question_Lim_Zim} has a positive answer for self-opposite Grassmannians $\Grassd$ \cite{van2019rigidity} and for the complex projective space  $\mathbb{P}(\mathbb{C}^n)$ for $n \geq 3 $ \cite{zimmer2013rigidity}. In both these cases, the authors show --- under some additional assumption --- that there is only one divisible proper domain in $G/P$ up to the action of $G$, and that this domain is symmetric. 

In simultaneous joint work with A. Chalumeau \cite{chalumeau2024rigidity}, we give a positive answer to Question~\ref{question_Lim_Zim} when $G/P$ is the boundary of the pseudo-hyperbolic space $\mathbb{H}^{p,q}$ for $p \geq 2$ and $q \geq 1$ --- also called \emph{the Einstein universe of signature $(p-1, q)$}. The special case where $q=1$ admits a different proof than the general case, using \emph{causality} in the Einstein universe of signature $(n-1,1)$. In Lemma~\ref{lem_existence_extremal} of the present paper, we generalize this causality approach to \emph{Shilov boundaries} of Hermitian symmetric spaces of tube type (see the next section), in order to prove our main Theorem~\ref{thm_main} below. These flag manifolds appear naturally in several contexts, such as Euclidean Jordan algebras and complex analysis (see e.g.\ \cite{faraut1994analysis}), or $\Theta$-positivity and higher Teichmüller Theory \cite{guichard2018positivity}. Here the proper parabolic subgroup $P$ is maximal, so Fact~\ref{th_zim_G/P} does not apply. For any two transverse points $p,q \in G/P$, the open set consisting of all points of $G/P$ which are transverse to both $p$ and $q$ has several connected components, two of which are proper symmetric domains called \emph{diamonds} (see Definition~\ref{def_diamant}). The main result of the present paper (Theorem~\ref{thm_main} below) is a positive answer to Question~\ref{question_Lim_Zim}: we prove that in our context of Shilov boundaries there is, up to the action of $G$, only one proper almost-homogeneous domain (hence only one proper divisible domain) in $G/P$, namely a diamond; in particular, it is symmetric.

Removing the properness assumption in Question \ref{question_Lim_Zim} adds flexibility. Indeed, as mentioned above, there exist many examples of nonproper, nonsymmetric divisible domains of flag manifolds $G/P$, constructed e.g.\ as domains of discontinuity for Anosov representations \cite{frances2005lorentzian, guichard2012anosov, kapovich2017dynamics}.

\subsection{Statement of the main theorem. } Let $G$ be a simple Lie group of Hermitian type of tube type and of real rank $r \geq 1$ (the complete list of the corresponding Lie algebras is given in Table \ref{table_shilov_bnds}). Let $\FS$ be set of simple restricted roots of $G$ and $\alpha_r \in \FS$ be the unique long root. Then the flag manifold $\Fl_{\{\alpha_r\}} = G/P_{\{\alpha_r\}}$ defined by $\{\alpha_r\}$ (see Section~\ref{sect_recalls_lie_groups}) is the \emph{Shilov boundary} of the symmetric space $\mathbb{X}_G$ of $G$, denoted by~$\SB(G)$. Recall that the \emph{automorphism group} $\Aut(\O)$ of a domain $\O \subset \SB(G)$ is the group of all elements of $G$ that leave $\O$ invariant; we say that $\O$ is \emph{almost-homogeneous} if every element of $\partial \O$ lies in the closure of the orbit of an element of $\O$ under $\Aut(\O)$.

\begin{thm}\label{thm_main} Let $G$ be a simple Lie group of Hermitian type of tube type. Then every proper almost-homogeneous domain of $\SB(G)$ is a diamond.
\end{thm}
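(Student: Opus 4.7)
The plan is to exploit the causal structure on the Shilov boundary $\SB(G)$, generalizing the approach used for the Einstein universe of signature $(n\opp 1,1)$ in \cite{chalumeau2024rigidity}. The key point is that $\SB(G)$ carries a $G$-invariant causal structure, whose causal cones at each tangent space come from an invariant pointed cone in the Jordan-algebraic tangent model (equivalently the cone of $\Theta$-positive vectors of \cite{guichard2018positivity}). In this causal structure, diamonds $\Diams(p,q)$ are exactly the causal intervals between two causally related transverse points $p$ and $q$.

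I would first analyze proper domains $\O \subset \SB(G)$ through this causal lens. A central observation is that properness is equivalent to a causal separation condition: for each $x \in \O$ the closure $\overline{\O}$ avoids a suitable translate of the non-transverse hypersurface $\hyp_x$. From this I would deduce that a proper domain is \emph{causally convex} (every causal curve joining two points of $\O$ stays in $\O$) and \emph{strongly causal} (no future-directed causal loop lives in $\O$). This is the causal analog of the classical fact that proper quasi-homogeneous domains in a flag manifold are convex.

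The core step is to produce two transverse boundary points $p^\opp, p^+ \in \partial \O$ such that $\O \subset \Diams(p^\opp, p^+)$. I would exploit quasi-homogeneity: pick a sequence $(x_n) \subset \O$ escaping every compact and, using $\Aut(\O)$-cocompactness, choose $(g_n) \subset \Aut(\O)$ with $g_n \cdot x_n$ in a compact fundamental domain. Applying the Cartan $KAK$-decomposition and the standard north-south contraction dynamics on $\SB(G)$, one extracts boundary points $p^\opp, p^+ \in \partial\O$ playing the role of attracting and repelling limits of $(g_n^{\pm 1})$. Properness together with the previously established causal convexity should force $p^\opp \transv p^+$ and confine $\O$ between them, yielding $\O \subset \Diams(p^\opp, p^+)$.

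For the reverse inclusion, I would argue that the finite-index subgroup of $\Aut(\O)$ stabilizing the pair $\{p^\opp, p^+\}$ preserves the diamond $\Diams(p^\opp, p^+)$ and acts cocompactly on~$\O$. Since $\Diams(p^\opp, p^+)$ is a Riemannian symmetric space on which this stabilizer acts transitively, and $\O$ is a non-empty open connected subset on which the same group acts cocompactly, a minimality/dimension argument combined with the fact that diamonds are the maximal proper causally convex open subsets forces $\O = \Diams(p^\opp, p^+)$.

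The main obstacle will be the identification of the antipodal pair $(p^\opp, p^+)$ and the verification of their transversality. In signature $(n\opp 1,1)$ the non-transverse hypersurface at a boundary point is one-dimensional, so dynamical limits are easy to pin down; in general $\hyp_p$ is higher-dimensional and carries its own Jordan-algebraic structure, so a priori the sequences $(g_n^{\pm 1})$ could contract only to proper subvarieties of $\hyp_{p^\opp}$. A delicate analysis -- exploiting the tube-type hypothesis, the self-duality of the invariant cone field, and the positive semigroup generated by causal translations -- should be needed to collapse these subvarieties to single points, thereby exhibiting the diamond.
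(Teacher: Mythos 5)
Your high-level plan --- exploit causality, produce two transverse ``extremal'' boundary points $p^\opp, p^+$ with $\O \subset \Diams(p^\opp,p^+)$, then use cocompactness for equality --- matches the broad strokes of the paper, but the middle of the argument rests on two steps that do not go through, and the paper's main technical innovation is precisely what fills the resulting gap.

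First, you deduce ``from properness'' that a proper quasi-homogeneous domain is causally convex, intending to mirror the Einstein-universe case. The paper explicitly warns (Example~\ref{ex_eplicit_diamonds}(1) and Remark~\ref{rmk_strong_extr_pts} / the remark closing Section~\ref{sect_end_mainth}) that this is the step that fails: for higher-rank $\HTT$ groups the inclusion $\mathbf{C}(P) \subset \hyp_P \cap \Affstd$ is \emph{strict}, the set $\SB(G)\smallsetminus(\hyp_p\cup\hyp_q)$ has $r+1$ rather than $3$ connected components, and the causal-convexity argument of~\cite{chalumeau2024rigidity} (their Prop.~6.2) does not generalize. What one gets from quasi-homogeneity is only \emph{dual} convexity (Proposition~\ref{prop_zimmer_dual_convex}), a strictly weaker statement that does not by itself pin a visual boundary point onto a single supporting hypersurface.

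Second, your mechanism for locating $p^\opp, p^+$ --- extract KAK-contraction dynamics from escaping sequences of automorphisms --- is exactly where you acknowledge the obstacle: the dynamics \emph{a priori} collapse only onto proper subvarieties of $\hyp_{p^\opp}$, and ``a delicate analysis should be needed.'' That delicate analysis is the content the paper supplies and you do not: (i) the Kobayashi pseudo-metric built from photons (Section~\ref{sect_dist_kob}), (ii) its comparison with a Caratheodory metric via the intersection-polynomial argument (Lemma~\ref{lem_comp_crossratios}, Proposition~\ref{lem_key_lemma}), which yields properness of $\kob_\O$, (iii) the equivalence ``$\MRr$-extremal $\Rightarrow$ visual'' (Proposition~\ref{lem_extr_are_visual}), and (iv) the geometric consequence $\hyp_p\cap\O=\emptyset$ for visual $p$ (Lemma~\ref{lem_geom_prop_visuel}). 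Crucially, the paper does \emph{not} obtain the extremal points $p_0,q_0$ from contraction dynamics; it constructs strongly $\MRr$-extremal points directly by minimizing a linear functional against the invariant cone (Lemma~\ref{lem_existence_extremal}), and only then feeds them into the visuality machinery. Your final ``maximality of diamonds among proper causally convex sets'' step is also not what the paper does; it instead proves $\Aut(\O)\le\Aut(\Diams(p_0,q_0))$ (Proposition~\ref{lem_action_strongly_extremal_pts}) and shows $\O$ is closed in the diamond using properness of the Kobayashi metric (Proposition~\ref{prop_egalité}). Without some replacement for the Kobayashi metric you have no invariant proper metric adapted to photons, and the core of the argument remains open.
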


Since every divisible proper domain of $\SB(G)$ is almost homogeneous, Theorem~\ref{thm_main} gives a positive answer to Question~\ref{question_Lim_Zim}.
It also implies that, reciprocally, every almost-homogeneous domain of $\SB(G)$ is divisible. As already mentionned in Section~\ref{sect_divisible_convex}, this property does not hold in the projective space, as there exist properly convex domains of the projective space that are almost-homogeneous and not divisible, and not even quasi-homogeneous, see \cite{ballas2018convex}.

As in \cite[Sect.\ 6.3]{chalumeau2024rigidity}, the proof of Theorem~\ref{thm_main} uses the causal structure of $\SB(G)$, i.e.\ the existence (up to taking an index-two subgroup of $G$) of a $G$-equivariant smooth family $(c_x)_{x \in \SB(G)}$ of properly convex open cones in the tangent bundle $T(\SB(G))$. Any proper domain~$\O \subset \SB(G)$ inherits a causal structure from the one of $\SB(G)$. A generalization of Liouville's classical theorem implies that $\Aut(\O)$ is commensurable to the \emph{conformal group} of $\O$, that is, the group of diffeomorphisms $f:\O \rightarrow \O$ such that~$d_x f (c_x) = c_{f(x)}$ for all $x \in \O$ \cite{kaneyuki2011automorphism}. Theorem~\ref{thm_main} states that having a cocompact conformal group characterizes diamonds among proper domains of $\SB(G)$.

More generally, if $G$ is a semisimple Lie group, Hermitian of tube type, with trivial center and no compact factors, then one can write $G = G_1 \times \cdots \times G_k$, where~$G_i$ is a noncompact simple Lie group, Hermitian of tube type, for all $1 \leq i \leq k$. Then the \emph{Shilov boundary} $\SB(G)$ of $G$ is the flag manifold $\SB(G_1) \times \cdots \times \SB(G_k)$. By Lemma~\ref{thm_zim_semisimple} and Theorem~\ref{thm_main}, one directly gets:
\begin{cor}\label{cor_main_cor} Let $G$ be a  semisimple Lie group of Hermitian type of tube type with trivial center and no compact factors, and write $G = G_1 \times \cdots \times G_k$, where $G_i$ is a noncompact simple Lie group of Hermitian type of tube type for all $1 \leq i \leq k$. Then for any proper almost-homogeneous domain $\O \subset \SB(G)$, there are diamonds~$D_i \subset \SB(G_i)$ for $1 \leq i \leq k$, such that 
\begin{equation*}
    \O = D_1 \times \cdots \times D_k \subset \SB(G_1) \times \cdots \times \SB(G_k).
\end{equation*}
\end{cor}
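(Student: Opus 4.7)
The plan is to derive Corollary~\ref{cor_main_cor} directly by combining Fact~\ref{thm_zim_semisimple} with Theorem~\ref{thm_main}, the only genuine verification being that the parabolic subgroup defining the Shilov boundary of a product group splits as the product of the parabolics defining the Shilov boundaries of the factors.

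Concretely, I would first observe that $\SB(G)$ is by definition the flag manifold $G/P$, where $P = P_{\{\alpha_r\}}$ is the parabolic subgroup associated with the unique long simple restricted root. Under the decomposition $G = G_1 \times \cdots \times G_k$, the set of simple restricted roots $\FS$ of $G$ is the disjoint union of the simple restricted root systems $\FS^{(i)}$ of the $G_i$, and the long root $\alpha_r$ of $G$ restricts to a long root $\alpha_r^{(i)}$ in each $\FS^{(i)}$ (since each $G_i$ is Hermitian of tube type, each $\FS^{(i)}$ has a unique long root). It follows that the parabolic $P$ decomposes as $P = P_1 \times \cdots \times P_k$ with $P_i = P_{\{\alpha_r^{(i)}\}} \leq G_i$, whence
\begin{equation*}
  \SB(G) \;=\; G/P \;\simeq\; G_1/P_1 \times \cdots \times G_k/P_k \;=\; \SB(G_1) \times \cdots \times \SB(G_k),
\end{equation*}
which is the asserted product description of the Shilov boundary.

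Next, I would apply Fact~\ref{thm_zim_semisimple} to the proper quasi-homogeneous domain $\O \subset \SB(G)$: it yields proper quasi-homogeneous domains $\O_i \subset \SB(G_i)$ with $\O = \O_1 \times \cdots \times \O_k$. Finally, since each $G_i$ is a simple Lie group of Hermitian type of tube type, Theorem~\ref{thm_main} applies to each factor and guarantees that each $\O_i$ is a diamond $D_i \subset \SB(G_i)$. Assembling, one obtains $\O = D_1 \times \cdots \times D_k$, as required.

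There is essentially no obstacle here: the nontrivial inputs (the reduction to simple factors and the uniqueness up to $G$-action of the proper quasi-homogeneous domain in the simple case) are exactly what Fact~\ref{thm_zim_semisimple} and Theorem~\ref{thm_main} provide. The only minor care needed is to match the parabolic appearing in Fact~\ref{thm_zim_semisimple} with the one defining the Shilov boundary, which is the root-system bookkeeping carried out in the first paragraph.
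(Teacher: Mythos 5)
Your proof takes the same route as the paper: the paper states that for semisimple $G$ of Hermitian tube type, $\SB(G)=\SB(G_1)\times\cdots\times\SB(G_k)$, and then obtains the corollary directly by combining Fact~\ref{thm_zim_semisimple} (the product decomposition of a proper quasi-homogeneous domain) with Theorem~\ref{thm_main} applied to each factor. You simply spell out the root-system bookkeeping behind the product identification of $\SB(G)$, which the paper takes as given. One small imprecision in that bookkeeping: for semisimple $G$ with $k\geq 2$ factors there is not a single ``long root $\alpha_r$ of $G$ that restricts to $\alpha_r^{(i)}$''; rather the restricted root system of $G$ is the disjoint union of those of the $G_i$, each with its own unique long root $\alpha_r^{(i)}$, and the relevant $\Theta$ is the set $\{\alpha_r^{(1)},\dots,\alpha_r^{(k)}\}$. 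This does not affect the conclusion $P=P_1\times\cdots\times P_k$ or the rest of the argument, which is correct.
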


In particular, Question \ref{question_Lim_Zim} has a positive answer for Shilov boundaries of Hermitian symmetric spaces of tube type.

Note that the flag manifolds studied in this paper are in the list of Nagano spaces, and a diamond of $\SB(G)$ is a model for the noncompact dual of the compact symmetric space $\SB(G)$.

\subsection{$\Theta$-positive structures} \label{sect_pos_struct} Total positivity has been known and studied since the beginning of the 20th century for $\SL(N, \mathbb{R})$. It was generalized to real split semisimple Lie groups by Lusztig \cite{lusztig1994total}. On the other hand, isometry groups of irreducible Hermitian symmetric spaces of tube type were known to admit a causal structure. 

Guichard--Wienhard generalized these two notions of total positivity and causality with their notion of \emph{$\Theta$-positive structure}, where $\Theta$ is a subset of the simple roots of a semisimple Lie group $G$. They listed all pairs $(G, \Theta)$ such that $G$ admits a $\Theta$-positive structure \cite{guichard2018positivity, guichard2022generalizing}. The pairs $(G, \{ \alpha_r \})$, where $G$ is a simple Lie group of Hermitian type of tube type and $\alpha_r$ is the simple root of $G$ defining the Shilov boundary $\SB(G)$, constitute the only family of their list where the set $\Theta$ is a singleton (i.e.\ where the proper parabolic subgroup defined by $\Theta$ is maximal). Hence Theorem~\ref{thm_main} and Fact~\ref{th_zim_G/P} complete the classification of proper almost-homogeneous domains in flag manifolds admitting a $\Theta$-positive structure:

\begin{cor} 
Let $G$ be a real noncompact simple Lie group and $\Theta$ a subset of the simple restricted roots of $G$ such that $G$ admits a $\Theta$-positive structure. Then the following dichotomy holds:
    
    \begin{enumerate}
        \item If $|\Theta | = 1$, then $G$ is Hermitian of tube type and $\Fl_{\Theta} = \SB(G)$ admits exactly one proper almost-homogeneous domain (up to the action of $G$), which is a diamond.
        \item If $|\Theta| \geq 2$, then there does not exist any proper almost-homogeneous domain in $\Fl_{\Theta}$.
    \end{enumerate}
\end{cor}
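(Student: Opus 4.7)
The plan is essentially a bookkeeping step: the corollary is a direct synthesis of Theorem~\ref{thm_main}, Fact~\ref{th_zim_G/P}, and the Guichard--Wienhard classification of pairs $(G,\Theta)$ admitting a $\Theta$-positive structure. The dichotomy $|\Theta|=1$ versus $|\Theta|\geq 2$ exactly tracks whether the proper parabolic subgroup $P_\Theta$ is maximal (the paper's convention being that $P_\Theta$ is maximal iff $|\Theta|=1$), so each case is governed by exactly one of the two preceding results.

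For part (1), the first step is to appeal to the Guichard--Wienhard classification, which the paper has already recalled: the only entries of their list with $|\Theta|=1$ are those in which $G$ is simple of Hermitian type of tube type and $\Theta=\{\alpha_r\}$ is the long simple restricted root, giving $\Fl_\Theta=\SB(G)$. Theorem~\ref{thm_main} then applies and forces any proper quasi-homogeneous domain of $\SB(G)$ to be a diamond. The one small additional point is uniqueness up to the $G$-action. A diamond is the proper symmetric connected component attached to an ordered pair of transverse points of $\SB(G)$ (with a binary choice of component). Since $G$ acts transitively on ordered pairs of transverse points of $\SB(G)$, and since the full group $G$ (as opposed to the index-two subgroup that preserves the causal cone field $(c_x)_{x\in\SB(G)}$) swaps the two components, all diamonds lie in a single $G$-orbit, giving exactly one proper quasi-homogeneous domain up to $G$.

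For part (2), when $|\Theta|\geq 2$ the parabolic $P_\Theta$ is a proper \emph{non-maximal} parabolic subgroup of the simple noncompact group $G$, and the flag manifold $\Fl_\Theta=G/P_\Theta$ falls immediately within the scope of Fact~\ref{th_zim_G/P}, which yields the absence of any proper quasi-homogeneous domain.

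There is essentially no obstacle in the corollary itself: all the substantive content is packaged inside Theorem~\ref{thm_main} (for the singleton case) and Fact~\ref{th_zim_G/P} (for the non-singleton case). The only item one has to verify by hand is the uniqueness of the diamond up to $G$, and this is a standard double-transitivity argument on transverse pairs in the Shilov boundary combined with the fact that $G$ permutes the two causal orientations.
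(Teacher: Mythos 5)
Your proposal is correct and takes essentially the same route as the paper: the dichotomy is the synthesis of Theorem~\ref{thm_main} (for $|\Theta|=1$, using the Guichard--Wienhard classification to identify these as exactly the Shilov boundaries of HTT Lie groups) with Fact~\ref{th_zim_G/P} (for $|\Theta|\geq 2$, where $P_\Theta$ is nonmaximal). The uniqueness of diamonds up to the $G$-action is established in Section~\ref{ssect_def_diamonds} of the paper (any diamond is a $G$-translate of $\Diams_{\mathsf{std}}$, via transitivity on transverse pairs and the element $k_0$ swapping $\Diams_{\mathsf{std}}$ and $\Diams_{\mathsf{std}}'$), matching your double-transitivity plus causal-orientation-reversal argument.
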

Here we denote by $\Fl_{\Theta}$ the flag manifold defined by $(G, \Theta)$ (see Section~\ref{sect_flag_mfds}). Again, by Lemma~\ref{thm_zim_semisimple}, Question \ref{question_Lim_Zim} has a positive answer for flag manifolds $G/P_{\Theta}$ with a $\Theta$-positive structure, where $G$ is a noncompact semisimple (not necessarily simple) Lie group and $\Theta$ a subset of the simple roots of $G$.

\subsection{An application to $(G, G/P)$-structures. } In the real projective space $\mathbb{P}(\Rf^n)$, a nonsymmetric properly convex open set $\O$ divided by a discrete subgroup $\Gamma$ of~$\PGL(n, \mathbb{R})$ provides a closed  $(\PGL(n, \mathbb{R}), \mathbb{P}(\Rf^n))$-manifold $\O/ \Gamma$ that is not isomorphic to a compact quotient of the symmetric space of a noncompact Lie group. 

The case of Shilov boundaries of irreducible Hermitian symmetric spaces of tube type is different. Any $(G, \SB(G))$-manifold $M$ develops into an open subset of $\SB(G)$; see Section~\ref{section_cpt_manifolds_proper_dvt}. The manifold $M$ is \emph{proper} if the image of its developing map is proper in $\SB(G)$. We get the following corollary of Theorem~\ref{thm_main}, whose proof is given in Section~\ref{section_cpt_manifolds_proper_dvt}:

\begin{cor}
    \label{main_cor}
    Let $G$ be a simple Lie group of Hermitian type of tube type, and let $M$ be a closed connected $(G, \SB(G))$-manifold. Assume moreover that $M$ is proper. Then the manifold $M$ identifies as a $(G, \SB(G))$-manifold to a quotient $D/\Gamma$, where $D$ is a diamond of $\SB(G)$ and $\Gamma$ is a cocompact lattice of $\Aut(D)$. Hence the manifold $M$ is a finite cover of
    $$\mathbb{S}^1 \times (X_{L_{s}}/\Gamma'),$$
where $X_{L_{s}}$ is the symmetric space of the semisimple part $L_{s}$ of a Levi subgroup $L$ of~$P_{\{\alpha_r \}}$, and $\Gamma'$ is a cocompact lattice of $L_{s}$.
\end{cor}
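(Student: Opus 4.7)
The plan is to reduce Corollary~\ref{main_cor} to Theorem~\ref{thm_main} via the standard developing map / holonomy formalism of $(G,X)$-manifolds, and then analyze the structure of the quotient $D/\Gamma$ using the Levi decomposition of $P_{\{\alpha_r\}}$. First I would let $\dev \colon \tilde M \to \SB(G)$ be a developing map of $M$ and $\hol \colon \pi_1(M) \to G$ the associated holonomy, and set $\Omega := \dev(\tilde M)$ and $\Gamma := \hol(\pi_1(M))$. By the properness hypothesis, $\Omega$ is a proper open subset of $\SB(G)$, and $\Gamma$ preserves it by equivariance of $\dev$. Since $M$ is compact, a compact fundamental domain of $\tilde M$ has image under $\dev$ a compact subset of $\Omega$ whose $\Gamma$-translates cover $\Omega$; thus $\Omega$ is a proper quasi-homogeneous domain of $\SB(G)$. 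Theorem~\ref{thm_main} then forces $\Omega$ to be a diamond $D$.

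Next I would upgrade $\dev \colon \tilde M \to D$ to a diffeomorphism, from which $M \cong D/\Gamma$ as $(G, \SB(G))$-manifolds and $\Gamma$ is a cocompact torsion-free lattice in $\Aut(D)$. This is where I expect the main technical obstacle to lie. The idea is to exploit that $D$ is convex (in fact affinely convex) in the open affine chart of $\SB(G)$ coming from the Jordan algebra structure of a tube-type Hermitian symmetric space: $D$ is a symmetric cone model inside this chart. A standard convex $(G,X)$-structure argument in the spirit of Koszul and Benoist (for properly convex domains of $\mathbb{P}(\mathbb{R}^n)$) then shows that a local diffeomorphism from a connected manifold into a convex domain, whose image is all of the convex domain, must be a diffeomorphism: convexity lets one lift affine segments globally through $\dev$, producing a continuous inverse, while contractibility of $D$ rules out any nontrivial sheet structure. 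Setting this up carefully in the Shilov-boundary affine chart is where all the work sits.

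Once $M \cong D/\Gamma$, I would read off the conclusion from the homogeneous geometry of $D$. The pointwise stabilizer in $G$ of the ordered pair of transverse points forming the ``tips'' of $D$ is the Levi subgroup $L$ of $P_{\{\alpha_r\}}$, which acts transitively on $D$; up to finite index, $\Aut(D) = L$. Writing $L^\circ = L_s \cdot Z(L)^\circ$ with $Z(L)^\circ \cong \mathbb{R}$ in the tube-type case, one obtains $D \cong X_{L_s} \times \mathbb{R}$. It remains to show that a cocompact lattice $\Gamma \subset L_s \times \mathbb{R}$ is commensurable to a product lattice $\Gamma' \times \mathbb{Z}$, which is standard: since $L_s$ is semisimple one has $\mathrm{Hom}(L_s, \mathbb{R}) = 0$, and a short argument using cocompactness forces the projection of $\Gamma$ to $\mathbb{R}$ to be discrete and its kernel to be a cocompact lattice $\Gamma'$ in $L_s$. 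Passing to the corresponding finite index subgroup of $\Gamma$ then exhibits $M$ as a finite cover of $(X_{L_s} \times \mathbb{R})/(\Gamma' \times \mathbb{Z}) = \mathbb{S}^1 \times (X_{L_s}/\Gamma')$, as claimed.
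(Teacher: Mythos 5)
Your overall strategy matches the paper's: developing map and holonomy, show $\O = \dev(\widetilde{M})$ is proper and quasi-homogeneous using a compact fundamental domain, apply Theorem~\ref{thm_main} to conclude $\O$ is a diamond $D$, then upgrade $\dev$ to a diffeomorphism, and finally read off the structure of $D/\Gamma$. You diverge at the step you yourself flag as the main obstacle. The general claim you state --- ``a local diffeomorphism from a connected manifold into a convex domain, whose image is all of the convex domain, must be a diffeomorphism'' --- is false without further hypotheses; one must use that the source is the universal cover of a \emph{closed} manifold. The missing ingredient is completeness: the paper pulls back to $\widetilde{M}$ the $\Aut(D)$-invariant Riemannian metric coming from the identification of $D$ with $\mathbb{R} \times X_{L_s}$ (Fact~\ref{fact_autom_diam}), notes that this metric descends to the closed manifold $M$ and hence is complete, and concludes that the local isometry $\dev$ is a covering map onto the simply connected space $D$, hence a diffeomorphism. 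Your Koszul--Benoist reference is morally the same kind of argument (with the Hilbert metric playing the role of the Riemannian one), but the phrase ``convexity lets one lift affine segments globally'' hides exactly where completeness enters, and as stated the claim has no content.

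There is also a smaller error in the last step: you assert that cocompactness forces the projection of $\Gamma \leq L_s \times \mathbb{R}$ to $\mathbb{R}$ to be discrete. This is false. If $S \leq L_s$ is a cocompact lattice with a homomorphism $\phi\colon S \to \mathbb{R}$ of irrational image (such a $\phi$ exists whenever $S$ has infinite abelianization, e.g.\ for a surface group in $\mathrm{PSL}(2,\mathbb{R})$), then $\Gamma = \{ (s, \phi(s) + n) : s \in S,\ n \in \mathbb{Z} \}$ is a cocompact lattice of $L_s \times \mathbb{R}$ whose projection to $\mathbb{R}$ is dense. The ``classical fact'' the paper invokes instead concerns the intersection $\Gamma \cap \mathbb{R}$ with the nilradical, which is a lattice in $\mathbb{R}$ by a theorem of Mostow; the image of $\Gamma$ in $L_s$ is then a cocompact lattice and the resulting central extension splits up to finite index.
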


In particular, with the notation of Corollary~\ref{main_cor}, the developing map of $M$ is a diffeomorphism between the universal cover $\widetilde{M}$ of $M$ and $D$; we say that the manifold $M$ is \emph{Kleinian}.

\subsection{Comparison of two invariant metrics} \label{sect_comparison_kobayashi_caratheodory} In this paragraph we discuss an important step of the proof of Theorem~\ref{thm_main} (see Proposition~\ref{lem_key_lemma1} below).

In the classical theory of divisible convex sets, the Hilbert metric is a powerful tool to understand the automorphism group of a properly convex domain $\O$, due to its~$\Aut(\O)$-invariance. For a proper almost-homogeneous domain $\O$, the Hilbert metric allows one to relate the geometry of $\partial \O$ to the dynamics of $\Aut(\O)$:

\begin{fact}[{\cite[Lem.\ 4]{vey1970automorphismes}}]\label{lem_vey}
    Let $\O$ be a properly convex domain of $\mathbb{P}(\mathbb{R}^n)$. Assume that there exists a subgroup $H \leq \Aut(\O) $ such that $H$ acts cocompactly on $\O$. Then for any extremal point $p \in \partial \O$, there exists a sequence $(h_n) \in H^{\mathbb{N}}$ such that for any compact subset $\mathcal{K} \subset \O$, the Hausdorff limit of $(h_n \cdot \mathcal{K})$ is $\{ p \}$.  
\end{fact}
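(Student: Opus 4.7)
The plan is to exploit the Hilbert metric $d_{\O}$ on $\O$, which is $\Aut(\O)$-invariant, complete, and has compact closed balls in $\O$. The strategy is: use cocompactness of the $H$-action to produce a sequence $(h_n) \in H^{\mathbb{N}}$ moving a fixed base point to converge to $p$, and then use the extremality of $p$ to show that no other point of $\overline{\O}$ can be an accumulation point of a $d_{\O}$-bounded sequence of images.

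First I would pick a compact fundamental domain $K_0 \subset \O$ for the $H$-action, an arbitrary sequence $(x_n) \in \O^{\mathbb{N}}$ converging to $p$ in $\mathbb{P}(\mathbb{R}^n)$, and write $x_n = h_n \cdot y_n$ with $y_n \in K_0$. Up to extraction, $y_n \to y_\infty \in K_0$. The sequence $(h_n)$ is the candidate answer.

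The heart of the argument is the following auxiliary fact: if $p \in \partial \O$ is extremal, $(u_n) \in \O^{\mathbb{N}}$ satisfies $u_n \to p$, and $(v_n) \in \O^{\mathbb{N}}$ satisfies $d_{\O}(u_n, v_n) \leq R$ for some fixed $R > 0$, then $v_n \to p$. To prove it, suppose after extraction $v_n \to v_\infty \in \overline{\O}$ with $v_\infty \neq p$. If $v_\infty \in \O$, the completeness of $d_{\O}$ and the fact that $u_n$ escapes every compact subset of $\O$ force $d_{\O}(u_n, v_n) \to \infty$, contradicting the bound. If $v_\infty \in \partial \O$, one considers the projective lines $\ell_n$ through $u_n$ and $v_n$, with endpoints $a_n, b_n$ at $\partial \O$: extracting, $\ell_n$ converges to the line $\ell_\infty$ through $p$ and $v_\infty$, and the chord $\ell_n \cap \overline{\O}$ converges to a nondegenerate chord of $\overline{\O}$ containing both $p$ and $v_\infty$. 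Extremality of $p$ forces $p$ to be an endpoint of this limit chord; a direct cross-ratio computation then shows the Hilbert formula for $d_{\O}(u_n, v_n)$ blows up, contradicting the bound.

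Finally, given a compact set $\mathcal{K} \subset \O$, set $R = \operatorname{diam}_{d_{\O}}(\mathcal{K} \cup K_0) < \infty$. For every $z \in \mathcal{K}$ and every $n$, the $H$-invariance of $d_{\O}$ gives $d_{\O}(x_n, h_n \cdot z) = d_{\O}(y_n, z) \leq R$. Applied to any sequence $(h_n \cdot z_n)_n$ with $z_n \in \mathcal{K}$, the auxiliary fact forces every accumulation point in $\overline{\O}$ to equal $p$, and a routine compactness argument in $\mathbb{P}(\mathbb{R}^n)$ upgrades this to Hausdorff convergence of $h_n \cdot \mathcal{K}$ to $\{p\}$. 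The main obstacle is the auxiliary fact, specifically the case $v_\infty \in \partial \O$: it is precisely here that extremality is needed to rule out the phenomenon (present in non-strictly convex domains) of $d_{\O}$-bounded sequences accumulating onto a nontrivial segment of $\partial \O$.
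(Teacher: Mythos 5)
The paper states Fact~\ref{lem_vey} with a citation to Vey and gives no proof, so there is no in-paper argument to compare with directly; the closest analogue is the proof of Proposition~\ref{lem_extr_are_visual}, the paper's generalization of Vey's lemma to Shilov boundaries. Your proof is correct and runs on the same two-step architecture as that proposition: first a ``visuality'' statement (if $p$ is extremal, $u_n \to p$ and $d_{\O}(u_n, v_n)$ is bounded, then $v_n \to p$), then a cocompactness argument producing $(h_n)$ by writing a sequence $x_n \to p$ as $x_n = h_n \cdot y_n$ with $y_n$ in a compact fundamental domain. Your auxiliary fact parallels Lemma~\ref{lem_asymptotic_behav_extremal}: extract a limiting chord, force $p$ to be one of its endpoints via extremality, and watch the Hilbert cross-ratio blow up. You also correctly exploit a genuine simplification that is special to the projective case: the Hilbert metric is computed along a single chord, so no chaining is involved, and your auxiliary fact therefore requires no quasi-homogeneity hypothesis and no analogue of Lemma~\ref{lem_majoration_N_on_cpt}; on $\SB(G)$, by contrast, the Kobayashi metric involves chains of arbitrary length, which is exactly why the paper needs the truncated metrics $\kob^N_{\O}$ and the quasi-homogeneity hypothesis --- a distinction the paper itself flags in a remark after Proposition~\ref{lem_extr_are_visual}. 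Two small imprecisions worth fixing: where you invoke ``completeness'' of $d_{\O}$ in the case $v_\infty \in \O$, what you actually use is properness of the Hilbert metric (closed balls are compact), which is what makes $u_n \to \partial\O$ incompatible with $v_n$ staying in a fixed compact set while $d_{\O}(u_n, v_n)$ stays bounded; and when asserting that extremality forces $p$ to be an endpoint of the limit chord $[a_\infty, b_\infty]$, you should add the one-line convexity observation that an open chord between two boundary points of a convex body lies either entirely in the interior or entirely in the boundary, so if $p \in \partial\O$ were interior to that chord the whole open chord would lie in $\partial\O$, contradicting extremality.
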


For every $z \in \mathbb{P}(\mathbb{R}^n)$ we choose a lift $\widetilde{z}$ of $z $ in $\Rf^n$. Given two distinct points~$x,y$ of~$\mathbb{P}(\mathbb{R}^n)$, we denote by $\ell_{x,y}$ the unique projective line through $x$ and $y$. Finally, given four points $a,b,c,d \in \mathbb{P}(\mathbb{R}^n)$, we denote by $(a:x:y:b)$ their \emph{cross ratio}. The Hilbert metric on a properly convex domain $\O \subset \mathbb{P}(\mathbb{R}^n)$ can be defined in two ways: for any~$x,y \in \O$, the distance $H_{\O}(x,y)$ is equal to both of the two quantities

\begin{enumerate}
    \item $\inf \left \{ \log (a:x:y:b) \mid a,b \in\O \cap \ell_{x,y}, \ a,x,y,b \text{ aligned in this order} \right\} $;
    \item $\sup_{\eta, \xi \in \O^*} \log \left| \frac{\xi(\widetilde{x}) \eta(\widetilde{y})}{\xi(\widetilde{y})\eta (\widetilde{x})} \right|$, where $\O^* = \mathbb{P}\left(\{f \in \mathbb{R}^n \mid f(\widetilde{z}) \ne 0 \quad  \forall z \in \overline{\O}\}\right)$.
\end{enumerate}

One idea in this paper is to adapt some arguments and results from convex projective geometry, in particular Fact~\ref{lem_vey}, to the context of a flag manifold $\Fl_{\Theta}$ different from the real projective space. This requires constructing an $\Aut(\O)$-invariant metric on a proper domain $\O \subset \Fl_{\Theta}$ that gives enough information about the structure of $\partial \O$. Definition (2) above of the Hilbert metric already admits generalizations to arbitrary flag manifolds $\Fl_{\Theta}$ \cite{zimmer2018proper}, called \emph{Caratheodory metrics} and defined from irreducible $\Theta$-proximal representations of $G$; see Section~\ref{sect_def_cara}. These metrics turn out to be proper (i.e.\ closed balls are compact) as soon as $\O$ is \emph{dually convex} (see Section~\ref{sect_dual_proper_domain} and the proof of \cite[Thm 9.1]{zimmer2018proper}). However, in our case, these metrics do not provide enough information about $\partial \O$ to prove Theorem~\ref{thm_main}. Instead, we use the \emph{Kobayashi metric}, which generalizes Definition (1) above and is denoted by $\kob_{\O}$ in this paper. Contrary to those of Caratheodory metrics, the definition of the Kobayashi metric requires more structure on $\Fl_{\Theta}$: namely, the existence of \emph{photons} satisfying some conditions of invariance (Lemmas \ref{lem_parametrization} and \ref{lem_photons_affine}) and of abundance (Lemma~\ref{lem_paths_diamonds}). These properties are satisfied for Shilov boundaries of irreducible Hermitian symmetric spaces of tube type, but not in arbitrary flag manifolds. \\
\indent Although we will not directly use Caratheodory metrics in the proof of Theorem~\ref{thm_main}, we will indirectly use them to deduce properties of the Kobayashi metric: with the notation of Section~\ref{sect_irred_semi}, we get the following proposition, whose proof is contained in Proposition~\ref{lem_key_lemma} and Corollary~\ref{cor_kobayashi_geodesic}:

\begin{prop}\label{lem_key_lemma1} Let $G$ be a simple Lie group of Hermitian type of tube type and let~$(V, \rho)$ be a finite-dimensional real irreducible linear representation of $G$ with highest weight $\chi = N \omega_{\alpha_r}$, where $N \in \mathbb{N}_{>0}$ and $\omega_{\alpha_r}$ is the fundamental weight associated with the last simple root $\alpha_r$. Let $\O \subset \SB(G)$ be a proper dually convex domain, and let $C_{\O}$ be the Caratheodory metric on $\O$ induced by $(V, \rho)$. Then one has
$$\kob_{\O} \geq \frac{1}{N}C_{\O}.$$
\noindent In particular, the metric $\kob_{\O}$ is proper.
\end{prop}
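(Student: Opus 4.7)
The plan is to compare the two metrics via the $G$-equivariant embedding $\iota\colon \SB(G) \hookrightarrow \mathbb{P}(V)$ that sends the base point (stabilized by $P_{\{\alpha_r\}}$) to the highest weight line $[v_\chi]$, and to reduce the comparison to a polynomial computation along a single photon. By Lemmas~\ref{lem_parametrization} and~\ref{lem_photons_affine}, a photon $\phi \subset \SB(G)$ is the orbit of a base point under an $\mathrm{SL}_2(\mathbb{R})$-subgroup $S \leq G$ attached to the long simple root $\alpha_r$. Since $\chi = N\omega_{\alpha_r}$ satisfies $\chi(H_{\alpha_r}) = N$, the vector $v_\chi$ is the highest weight vector of an irreducible $(N+1)$-dimensional $S$-submodule $W_\phi \subset V$. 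Hence $\iota(\phi)$ is a rational normal curve of degree $N$ in $\mathbb{P}(W_\phi) \subset \mathbb{P}(V)$, and after fixing an affine parameter $t$ on $\phi$ there is a polynomial lift $t \mapsto \widetilde{\phi(t)} \in W_\phi$ of degree at most $N$ in each coordinate. In particular, for every $\xi \in V^*$ the function $f_\xi(t) := \xi(\widetilde{\phi(t)})$ is a real polynomial of degree at most $N$ in $t$.

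The admissible dual functionals $\xi$ in the definition of $C_\O$ arise from the dual embedding $\iota^\vee\colon \SB(G)^{op} \hookrightarrow \mathbb{P}(V^*)$. Along the photon $\phi$, the restriction of such a $\xi$ to $W_\phi$ cuts out the osculating hyperplane of the rational normal curve $\iota(\phi)$ at a single point $\iota(y^*)$, so $f_\xi(t) = c_\xi (t - s_\xi)^N$ up to a nonzero scalar, where $s_\xi$ is the parameter of $y^*$ on $\phi$. The admissibility condition $[\xi] \in \O^*$ forces $s_\xi \notin [t_a, t_b]$, where $[t_a, t_b]$ is the parameter range of $\overline{\O} \cap \phi$.

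Given $x, y \in \O$ on such a photon with boundary points $a, b \in \partial\O \cap \phi$ aligned as $a, x, y, b$ and parameters $t_a < t_x < t_y < t_b$, and any admissible $\xi, \eta \in \O^*$ with dual parameters $s_\xi, s_\eta \notin [t_a, t_b]$, a direct computation yields
\[
\left|\frac{\xi(\widetilde{x})\,\eta(\widetilde{y})}{\xi(\widetilde{y})\,\eta(\widetilde{x})}\right|
= \left|\frac{(t_x - s_\xi)(t_y - s_\eta)}{(t_y - s_\xi)(t_x - s_\eta)}\right|^{N}
\leq (a:x:y:b)^N,
\]
with equality attained in the limits $s_\xi \to t_b^+$ and $s_\eta \to t_a^-$. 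Taking logarithms, the supremum over admissible pairs, and then the infimum over photons through $x, y$ and boundary choices yields $C_\O(x, y) \leq N\,\kob_\O(x, y)$, i.e., $\kob_\O \geq \frac{1}{N} C_\O$. Properness of $\kob_\O$ then follows from the properness of $C_\O$ on dually convex domains established by Zimmer \cite[Thm 9.1]{zimmer2018proper}.

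The principal technical step is the identification made in the second paragraph: rigorously verifying that every admissible dual functional $\xi \in \O^*$ restricts on the photon submodule $W_\phi$ to an osculating hyperplane of $\iota(\phi)$, equivalently that $f_\xi$ is always an $N$-th power of a linear polynomial along the photon. This relies on an explicit description of the $S$-action on the dual representation $V^*$ together with the geometry of the dual flag embedding, and it exploits the dual convexity hypothesis on $\O$ to exclude contributions from other types of functionals that could otherwise inflate the Caratheodory metric.
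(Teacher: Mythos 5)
Your overall strategy matches the paper's: reduce to a single photon, identify $\iota(\phi)$ as a rational normal curve of degree $N$, compare the two cross ratios via the polynomial $f_\xi(t)=\xi(\widetilde{\phi(t)})$, and deduce $C_\O \leq N\kob_\O$. However, the argument has a genuine gap exactly where you flag ``the principal technical step,'' and the way you propose to close it is not right.

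The claim that $f_\xi(t)$ is always a perfect $N$-th power of a linear factor, i.e., that the hyperplane cut out by $\xi$ on $W_\phi$ is an osculating hyperplane of the rational normal curve, is not a formal consequence of $\iota^\vee$ being the dual flag embedding, and it is certainly not something you can deduce from dual convexity of $\O$. A generic hyperplane meets a rational normal curve of degree $N$ in $N$ distinct points; what makes hyperplanes of the form $\iota^*(\xi)$ special has to be proved. The paper does this in Lemma~\ref{lem_inter_singleton} and Corollary~\ref{cor_inter_split}: one passes to the complexification, shows that $\rho(U^+)$ preserves the complexified photon $\Phoc$, and uses this to prove that $\Phoc$ meets the complexified hyperplane $\iota^*(\xi)^{\Cf}$ in exactly one point for \emph{every} $\xi \notin \Phot_{\mathsf{std}}$, with no hypothesis on $\O$ at all. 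Dual convexity plays a different role downstream: it guarantees that the endpoints $a,b$ of the photon chord $\phi\cap\O$ actually lie on supporting hyperplanes $\hyp_{\xi_1},\hyp_{\xi_2}$ with $\xi_i\in\O^*$, so that the infimum defining $\kob_\O$ is realized by admissible functionals. Attributing the $N$-th power structure to dual convexity, as you do, is therefore a misattribution, and you would have no way to ``exclude'' non-osculating $\xi$ in $\O^*$ by that route.

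There are two further issues of detail. First, you implicitly assume $\deg f_\xi = N$ for all admissible $\xi$; in general $\deg f_\xi = n(\xi)$ can be strictly less than $N$, and the paper handles this by working on the dense open set $\mathcal{A}_\rho$ where $n(\xi)=N$ (Lemma~\ref{lem_A_dense}) and taking a limit. Second, the final passage from the single-photon estimate to ``$C_\O \leq N\kob_\O$'' needs a word: $\kob_\O$ is an infimum over chains, not a single photon, so one must first prove the equality $\ro_\O(x,y) = \tfrac{1}{N}C_\O(x,y)$ on conjugate pairs and then combine it with the triangle inequality for $C_\O$ along a chain, as in the proof of Proposition~\ref{lem_key_lemma}. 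None of these is fatal to the approach, but the osculating-hyperplane step is the heart of the result and would have to be proved via the complexification argument rather than by appealing to dual convexity.
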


\begin{rmk}\label{rmk_comp_kob_cara}
    A Kobayashi metric can also be defined on proper domains of other flag manifolds, such as the Grassmannians \cite{van2019rigidity} and $\partial \mathbb{H}^{p,q}$ \cite{chalumeau2024rigidity}. In this case, Proposition~\ref{lem_key_lemma1} still holds with a similar proof.
\end{rmk}

\subsection{Outline of the proof of Theorem~\ref{thm_main} and organization of the paper} We now describe the various sections of the paper in the order in which they appear, emphasizing on the key steps of the proof of Theorem~\ref{thm_main}. We denote by $G$ a simple Lie group of Hermitian type of tube type and of real rank $r \geq 1$, and by $\SB(G)$ the Shilov boundary of the symmetric space $\mathbb{X}_G$ of $G$ (see Section~\ref{shilov_bnds}).

\subsubsection{Reminders on Lie groups and Shilov boundaries} In Section~\ref{sect_prelim}, we recall some facts on real Lie groups, flag manifolds, and $\Theta$-proximal representations. In Section~\ref{shilov_bnds}, we recall and prove basic properties of Shilov boundaries of Hermitian symmetric spaces of tube type, and fix some notation for the paper.

\subsubsection{Diamonds} In Section~\ref{sect_proper_autom}, following \cite{zimmer2018proper}, we define the notions of properness and of automorphism groups for domains of $\SB(G)$, in analogy with convex projective geometry. We introduce the previously mentioned \emph{diamonds} (see Section~\ref{ssect_def_diamonds}). If~$p$ lies in the standard affine chart $\Affstd$ of $\SB(G)$ (with the notation of Section~\ref{sect_flag_mfd}), then causality allows us to define the \emph{future} and the \emph{past} of $p$ (see Section~\ref{sect_causalité}). For any point $q$ in the future of $p$, the intersection of the past of $q$ with the future of $p$ is a diamond, denoted by $\Diams(p,q)$; see Figure \ref{fig:enter-label}.

\subsubsection{Photons. }\label{sect_rank_1} In Section~\ref{sect_photons_def}, we define a \emph{photon-generating} action as an action of~$\SL_2(\mathbb{R})$ on $\SB(G)$ which is conjugate to the one induced by the $\mathfrak{sl}_2$-triple associated with the last simple root of $\g$. A \emph{photon} is then the unique closed orbit in $\SB(G)$ of a photon-generating action. It is a topological circle whose properties are similar to those of projective lines in $\mathbb{P}(\Rf^n)$ (Lemmas~\ref{lem_parametrization} and~\ref{lem_photons_affine}) and allow us to define and analyze the Kobayashi metric (see Section~\ref{sect_dist_kob}), and to understand the boundary of a proper domain in $\SB(G)$ (see Section~\ref{sect_dyn_bndr}). 

In Section~\ref{sect_inter_poly} we introduce the \emph{intersection polynomials}, to describe the intersection of a photon with a \emph{maximal proper Schubert subvariety} of $\SB(G)$, that is, a subset of~$\SB(G)$ of the form 
\begin{equation}\label{eq_hyp_Zz}
    \hyp_z = \{ x \in \SB(G) \mid x \text{ is not transverse to } z \},
\end{equation}
where $z \in \SB(G)$ (see Section~\ref{sect_flag_mfds}). The complex roots of intersection polynomials are analyzed in Section~\ref{sect_comp_cara_kob}, see Section~\ref{sect_intro_kob} below.

\subsubsection{The Kobayashi metric}\label{sect_intro_kob} In Section~\ref{sect_dist_kob}, we define the Kobayashi pseudo-metric on a domain $\O \subset \SB(G)$, as mentioned above in Section~\ref{sect_comparison_kobayashi_caratheodory}, and investigate its basic properties (see Proposition~\ref{prop_properties_kobayashi_metric}). This pseudo-metric is a metric generating the standard topology whenever $\O$ is a proper domain of $\SB(G)$ (Proposition~\ref{prop_generate_topo_std}).

A substantial part of Section~\ref{sect_dist_kob} (more precisely, Sections~\ref{sect_comp_cara_kob} and~\ref{sect_prop_K}) is devoted to proving Proposition~\ref{lem_key_lemma1}. The complex roots of an intersection polynomial describe the intersection of complexifications of a photon and of a generic maximal proper Schubert subvariety of $\SB(G)$; since this intersection is a singleton (Lemma~\ref{lem_inter_singleton}), the intersection polynomials are split (Corollary~\ref{cor_inter_split}). This observation allows us to compare two cross ratios, namely the one appearing in the definition of the Kobayashi metric and the other in that of the Caratheodory metric; see Lemma~\ref{lem_comp_crossratios}, from which follows Proposition~\ref{lem_key_lemma1}.

\subsubsection{A diamond containing $\O$. }\label{sect_diamond_containing} Let $\O \subset \SB(G)$ be a proper domain. In Section~\ref{sect_dyn_bndr}, we define the \emph{$\MRr$-extremal points} (see Definition~\ref{def_pts_extremes}) of $\O$, which are analoguous to extremal points of a properly convex subset of the projective space. The definition of $\MRr$-extremal points and that of the Kobayashi metric both involve photons. Following the strategy of \cite{van2019rigidity}, this allows us to prove an analogue of Fact~\ref{lem_vey}; see Lemma~\ref{lem_analogue_vey}, whose main consequence is a strong geometric property for $\MRr$-extremal points of an almost-homogeneous domain (see Theorem~\ref{lem_geom_prop_visuel_2}). 

By almost-homogeneity, the domain $\O$ is dually convex (see \cite[Thm 9.1]{zimmer2018proper} and Proposition~\ref{prop_zimmer_dual_convex}), that is, for any $p \in \partial \O$ there is a supporting hypersurface to $\O$ at $p$ of the form $\hyp_z$ (see~\eqref{eq_hyp_Zz}); see Section~\ref{sect_dual_proper_domain}. Lemma~\ref{lem_geom_prop_visuel_2} expresses a stronger geometric property for an extremal point $p \in \partial \O$: this supporting hypersurface $\hyp_z$ can be taken to be $\hyp_p$ itself. This lemma, applied to two \emph{strongly $\mathcal{R}$-extremal} points $p_0, q_0 \in \partial \O$ (see Definition~\ref{def_pts_extremes} and Lemma~\ref{lem_existence_extremal}), implies that $\O$ is contained in the diamond $\Diams(p_0, q_0)$ (Section~\ref{sect_end_mainth}). 

\subsubsection{Proving the equality. } In Section~\ref{sect_end_mainth}, with the notation of Section~\ref{sect_diamond_containing} above, we prove that $\O = \Diams(p_0, q_0)$. The key point is the inclusion $\Aut(\O) \leq \Aut(\Diams(p_0, q_0))$, which implies, by almost-homogeneity, that $\O$ is closed in (and hence equal to) the diamond $\Diams(p_0, q_0)$.
This inclusion holds because any automorphism of $\O$ preserves the pair $\{p_0, q_0\}$, which is proved in Proposition~\ref{lem_action_strongly_extremal_pts}, and essentially characterizes $\Diams(p_0, q_0)$ (Fact~\ref{fact_autom_diam}).

\subsubsection{An application to geometric structures} In Section~\ref{section_cpt_manifolds_proper_dvt}, we prove Corollary~\ref{main_cor}, which is a consequence of Theorem~\ref{thm_main}.

\subsection*{Aknowledgements} I thank my PhD advisor Fanny Kassel for exposing this interesting topic to me and for her guidance, both in the proof and in the writing of this paper. I also thank Sami Douba for his valuable advice on the writing of this paper, and Jeff Danciger, Wouter van Limbeek and Andy Zimmer for interesting discussions on topics related to this work. I am grateful to Yves Benoist and François Labourie, the former for suggesting that I examine Question~\ref{question_Lim_Zim} for the $3$-dimensional Einstein universe, which, when generalized, led to~\cite{chalumeau2024rigidity} and the current paper, and the latter for sharing his insights on photons in positive flag manifolds. Finally, I warmly thank Adam Chalumeau for inspiring discussions in the context of our joint project~\cite{chalumeau2024rigidity}.

\section{Notation and basic reminders in Lie theory}\label{sect_prelim}

\subsection{Real and complex projective spaces, and cross ratio} \label{sect_cross_ratio} In this section we set some notation for the elements of real and complex projective spaces, and we recall the definition of the cross ratio on $\mathbb{P}(\mathbb{R}^2)$.

Given a finite-dimensional real vector space $V$, we will denote by $[v]$ the projection in $\mathbb{P}(V)$ of a vector $v \in V \smallsetminus \{ 0 \}$. In the case where $V = \mathbb{R}^2$, we denote by $[t_1: t_2]$ the projection in $\mathbb{P}(\mathbb{R}^2)$ of a vector $(t_1, t_2) \in \mathbb{R}^2 \smallsetminus \{0\}$. 

We denote by $(\cdot: \cdot: \cdot : \cdot)$ the classical cross ratio on $\mathbb{P}(\mathbb{R}^2)$. Recall that it is~$\SL_2(\mathbb{R})$-invariant and satisfies $([1:0] : [1:1] : [1:t] : [0:1]) = t$.

If $I \subset \mathbb{P}(\mathbb{R}^2)$ is a proper open interval with (possibly equal) endpoints $t_1$ and $t_2$, then the \emph{Hilbert pseudo-metric} on $I$ is denoted $H_I$ and defined as follows: for any pair~$s_1, s_2 \in I$ such that $t_1, s_1, s_2, t_2$ are aligned in this order (taking any order if~$s_1 = s_2$ or~$t_1 = t_2$), one has $H_I(s_1,s_2) := \log(t_1: s_1: s_2 : t_2)$. If $I = \mathbb{P}(\mathbb{R}^2)$, then $H_I$ is by convention the constant map equal to $0$ on $I^2$.

In Section \ref{sect_comp_cara_kob}, we will also be led to consider the complex projective spaces. Since we will consider both real vector and their complexification, to avoid the confusion we denote by $\mathbb{P}_c(W)$ the complex projective space of a finite-dimensional complex vector space $W$, and by $[v]_c$ the projection in $\mathbb{P}_c(W)$ of a vector $v \in W \smallsetminus \{ 0 \}$. We use the notation $[z_1: z_2]_c$ for the projection in $\mathbb{P}(\mathbb{C}^2)$ of a vector $(z_1, z_2) \in \mathbb{C}^2 \smallsetminus \{0\}$.

\subsection{Reminders on Lie groups Lie algebras}\label{sect_recalls_lie_groups}

In this section we recall some well-known facts about semisimple Lie groups and fix notations that will hold for the rest of the paper. We fix a real semisimple Lie gorup $G$ with Lie algebra $\g$.

\subsubsection{$\mathfrak{sl}_2$-triples} \label{sect_sl2-triples} \mbox{ } A triple $\tr = (e,h,f)$ of elements of $\g$ satisfying the equalities \mbox{  } $[h,e] = 2e$, $[h,f] = -2f$ and $[e,f] = h$ is called an \emph{$\mathfrak{sl}_2$-triple}. There is a Lie algebras embedding $\plongsl_\tr: \mathfrak{sl}_2(\mathbb{R}) \hookrightarrow \g$ such that $\plongsl_\tr(\E) = e$, $\plongsl_\tr (\He) = h$ and $\plongsl_\tr(\F) = f$, where
\begin{equation*}
    \E = \begin{pmatrix}
        0 & 1 \\ 0 & 0
    \end{pmatrix}; \quad \He = \begin{pmatrix}
        1 & 0 \\ 0 & -1
    \end{pmatrix}; \quad \F = \begin{pmatrix}
        0 & 0 \\ -1 & 0
    \end{pmatrix}.
\end{equation*}

\subsubsection{Cartan decomposition}  Let $B$ be the Killing form on $\g$. Let $K \leq G$ be a maximal compact subgroup and $\mathfrak{p}$ be the $B$-orthogonal of the Lie algebra $\mathfrak{k}$ of $K$ in $\g$. Then one has $\g = \mathfrak{k} \oplus \mathfrak{p}$. The \emph{Cartan involution }of $\g$ (with respect to $K$) is then the Lie algebra automorphism $ \sigma: \g \rightarrow \g$ defined by $ \sigma_{|\mathfrak{k}} = \operatorname{id}_{\mathfrak{k}}$ and $ \sigma_{|\mathfrak{p}} = -\operatorname{id}_{\mathfrak{p}}$. 

\subsubsection{Restricted root system.} \label{sect_real_lie_alg} Let $\aaa \subset \mathfrak{p}$ be a maximal abelian subspace, and $\g_0$ the centralizer of $\aaa$ in $\g$. We denote by $\aaa^*$ the space of all linear forms on $\aaa$. For $\alpha \in \aaa^*$, we define
\begin{equation*}
    \g_{\alpha} := \{ X \in \g \mid [H, X] = \alpha(H)X \quad \forall H \in \aaa\}.
\end{equation*}
One has $[\g_{\alpha}, \g_{\beta}] \subset \g_{\alpha + \beta}$ for any $\alpha, \beta \in \aaa^*$. If $\alpha \in \aaa^* \smallsetminus \{0\}$ satisfies $\g_{\alpha} \ne \{0 \}$, then we say that $\alpha$ is \emph{a root} of $(\g, \aaa)$. We denote by $\Sigma = \Sigma(\g, \aaa)$ the set of all the roots of $(\g, \aaa)$. One has $\g = \g_0 \oplus \bigoplus_{\alpha \in \Sigma} \g_{\alpha}$. We fix a \emph{fundamental system} $\FS = \{ \alpha_1, \cdots , \alpha_N\} \subset \Sigma$, i.e.\ a family of roots such that any root of $\g$ can be uniquely written as $\alpha = \sum_{i=1}^N n_i \alpha_i$, where the $n_i$ all have same sign for $1 \leq i \leq N$. The elements of $\FS$ are called \emph{simple roots}. The choice of a fundamental system determines a set of \emph{positive roots} $\Sigma^+$, i.e.\ those roots $\alpha$ where the $n_i$ are all nonnegative.

For any $\alpha \in \Sigma$ and $X \in \g_{\alpha}\smallsetminus \{0\}$, there exists a unique scalar multiple $X'$ of $X$ such that $(X',  \sigma(X'), X'],  \sigma(-X'))$ is an $\mathfrak{sl}_2$-triple. The element $[ \sigma(X'), X']$ does not depend on the choice of $X \in \g_{\alpha}$, and is denoted by $h_{\alpha}$. The family $(h_{\alpha})_{\alpha \in \FS}$, forms a basis of $\aaa$, whose dual basis in $\aaa^*$ is denoted by $(\omega_{\alpha})_{\alpha \in \FS}$.

\subsubsection{The restricted Weyl group} The \emph{restricted Weyl group} $W$ of $G$ is the quotient $N_K(\aaa)/Z_K(\aaa)$ of the normalizer of $\aaa$ in $K$ (for the adjoint action) by the centralizer of $\aaa$ in $K$. It is a finite group generated by the $B$-orthogonal reflexions in $\aaa$ with respect to the kernels of the simple roots. By duality with respect to $B$ (which induces a scalar product on $\aaa$), the action of $W$ on $\aaa$ induces an action on $\aaa^*$ preserving $\Sigma$. There exists a unique $w_0 \in W$, called the \emph{longest element}, such that $w_0 \cdot \Sigma^+ = -\Sigma^+$. The element $i: \aaa^* \rightarrow \aaa^*$ defined as $i = - w_0$ is called the \emph{opposition involution}, and satisfies~$i(\FS) = \FS$.

\subsubsection{Parabolic subgroups. } \label{sect_parab_subgroups} Let $\Theta \subset \FS$ be a subset of the simple roots. \emph{The standard parabolic subgroup} $P_{\Theta}$ (resp.\ the \emph{standard opposite parabolic subgroup} $P_{\Theta}^{\opp}$) is defined as the normalizer in $G$ of the Lie algebra
\begin{equation}\label{eq_lie_u}
    \uu_{\Theta}^+ := \bigoplus_{\alpha \in \Sigma_{\Theta}^+} \g_{\alpha} \quad \Bigg{(}\text{resp.\ }  \uu_{\Theta}^- := \bigoplus_{\alpha \in \Sigma_{\Theta}^+} \g_{-\alpha} \Bigg{)},
\end{equation}
where $\Sigma_{\Theta}^+ := \Sigma^+ \smallsetminus \operatorname{Span}(\FS \smallsetminus \Theta)$. By “standard”, we mean with respect to the above choices. One has
\begin{equation}\label{eq_lie_P}
    \operatorname{Lie}(P_{\Theta}) = \g_0 \oplus \bigoplus_{\alpha \in \Sigma^+} \g_\alpha \oplus \bigoplus_{\alpha \in \Sigma \smallsetminus\Sigma_{\Theta}^+} \g_{- \alpha}.
\end{equation}

More generally, a \emph{parabolic subgroup of type $\Theta$} of $G$ is a conjugate of $P_{\Theta}$ in $G$. 

The \emph{Levi subgroup} associated with $\Theta$ is the reductive Lie group defined as the intersection $L_{\Theta} := P_{\Theta} \cap P_{\Theta}^{\opp}$. The unipotent radical of $P_{\Theta}$ (resp.\ $P_{\Theta}^{\opp}$) is $U_{\Theta}^+ := \exp (\uu_{\Theta}^+)$ (resp.\ $U_{\Theta}^- := \exp (\uu_{\Theta}^-)$). One then has $P_{\Theta} = U_{\Theta}^+ \rtimes L_{\Theta}$ (resp.\ $P_{\Theta}^{\opp} = U_{\Theta}^- \rtimes L_{\Theta}$).

\subsection{Flag manifolds}\label{sect_flag_mfds} Let $G$ be a real semisimple Lie group. The \emph{flag manifold} associated with $\Theta$ is the quotient space $\Fl_{\Theta} := G/P_{\Theta}$. The flag manifold \emph{opposite} to $\Fl_{\Theta}$ is $\Fl_{\Theta}^{\opp} := G/P_{\Theta}^{\opp}$.

\subsubsection{Transversality} The action of $G$ on $\Fl_{\Theta} \times \Fl_{\Theta}^{\opp}$ by left translations has exactly one open orbit $\mathcal{O}$, which is the orbit of $(P_{\Theta}, P^{\opp}_{\Theta})$ and is dense. Two elements $x \in \Fl_{\Theta}$ and~$y \in \Fl_{\Theta}^{\opp}$ are said to be \emph{transverse} if $(x,y) \in \mathcal{O}$.

Given a point $y \in \Fl_{\Theta}^{\opp}$ (resp.\ $x \in \Fl_{\Theta}$), we let $\hyp_{y}$ (resp.\ $\hyp_{x}$) be the set of all elements of $\Fl_{\Theta}$ (resp.\ $\Fl_{\Theta}^\opp$) that are not transverse to $y$ (resp.\ to $x$). It defines an algebraic hypersurface of $\Fl_{\Theta}$ (resp.\ of $\Fl_{\Theta}^\opp$). The space $\Fl_{\Theta} \smallsetminus \hyp_y$ is called an \emph{affine chart} (or more classically a \emph{big Schubert cell}) and is an open dense subset of $\Fl_{\Theta}$. The chart
\begin{equation}\label{eq_standard_chart}
    \Affstd := \Fl_{\Theta} \smallsetminus \hyp_{P_{\Theta}^{\opp}}
\end{equation}
is called the \emph{standard affine chart}. The bijection
\begin{equation}\label{eq_A_egal_exp}
   \varphi_{\mathsf{std}}: \begin{cases}
        \uu_{\Theta}^- &\overset{\sim}{\longrightarrow} \Affstd \\
        X &\longmapsto \exp(X) P_{\Theta}
    \end{cases}
\end{equation}
induces an affine structure on $\Affstd$. Since $G$ acts transitively on $G/P^{\opp}_{\Theta}$, any affine chart~$ \Fl_{\Theta} \smallsetminus \hyp_y$ with $y \in \Fl_{\Theta}^\opp$ admits an affine structure, which moreover depends only on $y$ (and not on the choice of $g \in G$ such that $y = g P_{\Theta}^{\opp}$).

\indent If $i(\Theta) = \Theta$, then we say that $\Fl_{\Theta}$ is \emph{self-opposite}. For any representative $k_0 \in N_K(\aaa)$ of the longest element $w_0$, one has $P_{\Theta} = k_0 P_{\Theta}^{\opp} k_0^{-1}$, and one can identify $\Fl_{\Theta}$ and $\Fl_{\Theta}^{\opp}$ via the $G$-equivariant diffeomorphism $ \Fl_{\Theta} \rightarrow \Fl_{\Theta}^{\opp}; \ hP_{\Theta} \mapsto  h k_0 P_{\Theta}^{\opp}$. We will always make this identification in this paper.

\subsubsection{The automorphism group. } \label{sect_autom_group} The group of all the Lie algebra automorphisms of $\g$ is called the \emph{automorphism group} of $\g$ and denoted by $\Aut(\g)$. It is a Lie group with Lie algebra $\g$. When $G$ is simple, the map $\Ad: G \rightarrow \Aut(\g)$ has finite kernel.

In general, the group $\Aut(\g)$ does not act on $\Fl_{\Theta}$. However, it admits a finite-index subgroup that does: indeed, any $g \in \Aut(\g)$ induces an automorphism $\psi_g$ of the fundamental system  $\Delta$. This defines a group homomorphism $\Aut(\g) \rightarrow \Aut(\Delta)$, whose kernel is denoted by $\Aut_1(\g)$ and contains $\Ad(G)$. This kernel acts on $\Fl_{\Theta}$ for any~$\Theta \subset \Delta$. This action extends the one of $G$, in the sense that for any $g \in G$ and for any $x \in \Fl_{\Theta}$, one has $g \cdot x = \Ad(g) \cdot x$. In particular $\ker(\Ad)$ acts trivially on~$\Fl_{\Theta}$.

\subsection{Irreducible representations of semisimple Lie groups. } \label{sect_irred_semi} Let $(V, \rho)$ be a finite-dimensional real linear representation of a real semisimple Lie group $G$. We will denote by $\rho_*: \g \rightarrow \operatorname{End}(V)$ the differential of $\rho$ at $\operatorname{id}$.

\subsubsection{Restricted weights}
 
 For any $\lambda \in \aaa^*$, we define
\begin{equation*}
    V^{\lambda} := \{ v \in V \mid \rho(h) \cdot v = \lambda (h) v \quad \forall h \in \aaa \}.
\end{equation*}
If $V^{\lambda} \ne \{ 0 \}$ we say that $\lambda$ is a \emph{restricted weight} of $(V, \rho)$. Given $\alpha, \lambda \in \aaa^*$, one has~$\rho_*(X)\cdot V^{\lambda} \subset V^{\lambda + \alpha} $ for all $X \in \g_{\alpha}$. For each $\alpha \in \FS$, the element $\omega_{\alpha} \in \aaa^*$ introduced in Section~\ref{sect_real_lie_alg} is called the \emph{fundamental weight} associated with $\alpha$. The cone generated by the simple roots determines a partial ordering on $\aaa^*$ given by
\begin{equation*}
    \lambda \leq \lambda' \Longleftrightarrow \lambda' - \lambda \in \sum_{\alpha \in \FS} \mathbb{R}_+ \alpha.
\end{equation*}

If $(V, \rho)$ is a finite-dimensional real irreducible linear representation of $G$, then the set of restricted weights of $(V, \rho)$ admits a unique maximal element for that ordering (see \cite[Cor.\ 3.2.3]{goodman2009symmetry}). This element is called the \emph{highest weight} of $\rho$, and denoted by $\chi_{\rho}$ or $\chi$.

\subsubsection{Proximality and $\Theta$-proximal representations. } An automorphism $g \in \GL(V)$ is said to be \emph{proximal} in $\mathbb{P}(V)$ if it has a unique eigenvalue of maximal modulus and if the corresponding eigenspace is one-dimensional.

Let $\Theta \subset \FS$ be a nonempty subset of the simple roots. We say that $(V, \rho)$ is \emph{$\Theta$-proximal} if $\rho(G)$ contains a proximal automorphism of $V$ and $\{\alpha \in \FS \mid \langle \chi, \alpha \rangle >0\} = \Theta$ (see \cite{gueritaud2017anosov}). In this case,
we denote by $V^{\chi}$ the weight space associated with $\chi$ (it is automatically one-dimensional), and $V^{<\chi}$ the sum of all other weight spaces. 
\begin{prop}[{\cite[Prop.\ 3.3]{gueritaud2017anosov}}]\label{prop_ggkw} One has:
    
\begin{enumerate}
    \item The stabilizer of $V^{\chi}$ in $G$ (resp.\ $V^{< \chi}$) is $P_{\Theta}$ (resp.\ $P_{\Theta}^{\opp}$).
    \item The maps $g \mapsto \rho(g) \cdot V^{\chi}$ and $g \mapsto \rho(g) \cdot V^{<\chi}$ induce two $\rho$-equivariant maps: 
    \begin{equation*}
        \iota: \Fl_{\Theta} \longrightarrow \mathbb{P}(V) \text{ and } \iota^*: \Fl_{\Theta}^{\opp} \longrightarrow \mathbb{P}(V^*).
    \end{equation*}
    Two elements $(x,\xi) \in \Fl_{\Theta} \times \Fl_{\Theta}^{\opp}$ are transverse if, and only if, their images $\iota(x)$ and $\iota^*(\xi)$ are.
\end{enumerate}
\end{prop}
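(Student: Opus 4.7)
My plan is to prove (1) first and then deduce (2). For the stabilizer of the line $V^\chi$, it suffices by the Lie-algebraic structure to verify that each summand of $\operatorname{Lie}(P_\Theta) = \g_0 \oplus \bigoplus_{\alpha \in \Sigma^+} \g_\alpha \oplus \bigoplus_{\alpha \in \Sigma^+ \smallsetminus \Sigma_\Theta^+} \g_{-\alpha}$ preserves $V^\chi$. The summand $\g_0$ preserves every $\aaa$-weight space since it commutes with $\aaa$. For $\alpha \in \Sigma^+$, one has $\rho_*(\g_\alpha) V^\chi \subseteq V^{\chi+\alpha} = 0$ by maximality of $\chi$. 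For $\alpha \in \Sigma^+ \smallsetminus \Sigma_\Theta^+ = \Sigma^+ \cap \operatorname{Span}(\FS \smallsetminus \Theta)$, I would use the $\mathfrak{sl}_2$-triples $(e_{\alpha_i}, h_{\alpha_i}, f_{\alpha_i})$ associated with $\alpha_i \in \FS \smallsetminus \Theta$: by $\Theta$-proximality, $\chi(h_{\alpha_i}) = 0$, so $V^\chi$ sits at weight zero for this $\mathfrak{sl}_2$-action. Combined with $\rho_*(e_{\alpha_i}) V^\chi = 0$, standard $\mathfrak{sl}_2$-representation theory forces $\rho_*(f_{\alpha_i}) V^\chi = 0$; iterating Lie brackets propagates the vanishing to $\rho_*(\g_{-\alpha}) V^\chi = 0$ for every $\alpha \in \Sigma^+ \cap \operatorname{Span}(\FS \smallsetminus \Theta)$.

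For the reverse inclusion $\Stab(V^\chi) \subseteq P_\Theta$, the stabilizer is a closed subgroup of $G$ containing $P_\Theta$, hence equal to some parabolic $P_{\Theta'}$ with $\Theta' \subseteq \Theta$. If $\alpha_i \in \Theta \smallsetminus \Theta'$, then $\rho_*(f_{\alpha_i})$ would stabilize $V^\chi$; but $\chi(h_{\alpha_i}) > 0$ for $\alpha_i \in \Theta$, so the $\mathfrak{sl}_2$-module generated by any nonzero $v \in V^\chi$ is a nontrivial irreducible, giving $\rho_*(f_{\alpha_i}) v \neq 0$ with $\rho_*(f_{\alpha_i}) v \in V^{\chi - \alpha_i} \ne V^\chi$, a contradiction. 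For the stabilizer of the hyperplane $V^{<\chi}$, I would apply the analogous analysis to the contragredient representation $V^*$: a nonzero form $\phi \in V^*$ vanishing on $V^{<\chi}$ lies in the lowest-weight space $(V^*)^{-\chi}$ of $V^*$, and by the symmetric argument (applied to this lowest weight line, equivalently a highest weight line of $V^*$ after twisting by the longest Weyl element) its stabilizer equals $P_\Theta^{\opp}$, so $\Stab(V^{<\chi}) = \Stab(\mathbb{R}\phi) = P_\Theta^{\opp}$.

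For (2), well-definedness and $\rho$-equivariance of $\iota$ and $\iota^*$ are immediate from (1). The forward direction of the transversality equivalence reduces by $G$-equivariance to the pair $(P_\Theta, P_\Theta^{\opp})$, for which the weight decomposition $V = V^\chi \oplus V^{<\chi}$ directly yields transversality in $\mathbb{P}(V) \times \mathbb{P}(V^*)$. For the reverse direction, I would introduce the regular function $\psi(g,h) := \phi(\rho(h^{-1}g) v^\chi)$ on $G \times G$, where $v^\chi \in V^\chi \smallsetminus \{0\}$ and $\phi \in V^*$ satisfies $\phi(v^\chi) = 1$ and $\phi|_{V^{<\chi}} = 0$. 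This function is $(P_\Theta \times P_\Theta^{\opp})$-semi-invariant on the right, and its non-vanishing set is exactly $\{(g,h) : \iota(gP_\Theta) \not\subset \iota^*(hP_\Theta^{\opp})\}$. Descending to a nonzero section of a line bundle on $\Fl_\Theta \times \Fl_\Theta^{\opp}$, its non-vanishing locus is open, $G$-invariant, and contains $(P_\Theta, P_\Theta^{\opp})$ (since $\psi(\operatorname{id}, \operatorname{id}) = 1$), hence contains the entire open $G$-orbit; since the complement of the open orbit is the irreducible divisor of non-transverse pairs, the zero locus of $\psi$ coincides with this divisor and the reverse implication follows.

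The main obstacle is the reverse direction of the transversality equivalence in (2): while (1) amounts to routine Lie-algebraic and $\mathfrak{sl}_2$ computations, the reverse transversality requires packaging transversality into an invariant section and invoking the algebro-geometric structure of the $G$-orbit decomposition of $\Fl_\Theta \times \Fl_\Theta^{\opp}$ — specifically the fact that the complement of the open orbit is an irreducible divisor, so that any nonzero $G$-invariant section of the relevant line bundle vanishes exactly on this divisor.
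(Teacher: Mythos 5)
Note first that the paper itself does not prove this proposition; it is quoted from \cite[Prop.\ 3.3]{gueritaud2017anosov}, so there is no internal proof to compare against, and your attempt must be judged on its own.

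Your treatment of part~(1) (Lie-algebraic verification plus $\mathfrak{sl}_2$-theory for the simple roots in $\FS\smallsetminus\Theta$, and the passage to $V^{<\chi}$ via the lowest-weight line of $V^*$) and of the forward implication in part~(2) is essentially sound. The genuine gap lies in the reverse implication of~(2), where you assert that ``the complement of the open orbit is the irreducible divisor of non-transverse pairs.'' This is false for general $\Theta$: the $G$-orbits in $\Fl_\Theta\times\Fl_\Theta^{\opp}$ are indexed by double cosets $W_L\backslash W/W_L$ with $W_L = W_{\FS\smallsetminus\Theta}$, and in general there are several codimension-one orbits --- for $G/B\times G/B$ (the case $\Theta = \FS$) there are exactly $|\FS|$ of them, so the non-transverse locus is reducible. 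Once reducibility is possible, your argument only shows that the zero locus of $\psi$ is a nonempty $G$-invariant divisor \emph{contained in} the non-transverse locus; it could a priori miss some of the irreducible components, in which case some non-transverse pair $(x,\xi)$ would have $\iota(x)\notin\iota^*(\xi)$. (The irreducibility does happen to hold when $\Theta$ is a singleton, which is what the present paper uses, but the proposition is stated for arbitrary $\Theta$ and you assert the claim without argument.)

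The clean way to close the gap, needing no irreducibility, is the Bruhat decomposition. By equivariance take $x = P_\Theta$. Any non-transverse $\xi$ lies in a $P_\Theta$-orbit $P_\Theta\tilde w P_\Theta^{\opp}/P_\Theta^{\opp}$ for some $\tilde w \in N_K(\aaa)$ representing a nontrivial class $w$ in $W_L\backslash W/W_L$, say $\xi = p\tilde w P_\Theta^{\opp}$ with $p\in P_\Theta$. Since $\rho(\tilde w)$ permutes restricted weight spaces, $\rho(\tilde w)V^{<\chi} = \bigoplus_{\mu\neq w\chi}V^\mu$; and $w\chi\neq\chi$ because the $W$-stabilizer of the dominant weight $\chi$ is exactly $W_L = W_{\FS\smallsetminus\Theta}$. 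Hence $V^\chi\subseteq\rho(\tilde w)V^{<\chi}$, and applying $\rho(p)$, which by part~(1) preserves the line $V^\chi$, gives $\iota(P_\Theta) = V^\chi\subseteq\rho(p\tilde w)V^{<\chi} = \iota^*(\xi)$. This treats every nonopen orbit simultaneously, with no hypothesis on the component structure of their union.
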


Note that for $\nu \in V \smallsetminus \{0 \}$ and $f \in V^* \smallsetminus \{ 0 \}$, the transversality of $[\nu] \in \mathbb{P}(V)$ and~$[f] \in \mathbb{P}(V^*)$ is equivalent to $f(\nu) \ne 0$.

We will identify the space $\mathbb{P}(V^*)$ with the set $\mathbb{P}'(V)$ of projective hyperplanes of~$\mathbb{P}(V)$, via the bijection $\mathbb{P}(V^*) \overset{\sim}{\rightarrow} \mathbb{P}'(V)$ defined by $
        [f] \mapsto \mathbb{P}(\ker(f)).$ With this identification and the notations of Proposition~\ref{prop_ggkw}, two elements~$(x,\xi) \in \Fl_{\Theta} \times \Fl_{\Theta}^{\opp}$ are transverse if, and only if, one has $\iota(x) \notin \iota^*(\xi)$.

\section{Shilov boundaries and causality}\label{shilov_bnds} If $G$ is a simple Lie group of Hermitian type of tube type, that is, if the symmetric space $\mathbb{X}_G$ of $G$ is irreducible and Hermitian of tube type, then we will say that $G$ is a \emph{$\HTT$ Lie group}, and $\g$ a \emph{$\HTT$ Lie algebra}. In this section, we fix an $\HTT$ Lie group~$G$ with Lie algebra $\g$ and prove useful preliminary results.

\subsection{Strongly orthogonal roots and root system} \label{sect_root_syst_shilov} Two roots $\alpha, \beta \in \Sigma$ are called \emph{strongly orthogonal} if neither $\alpha + \beta$ nor $\alpha- \beta$ is a root. Since $G$ is of tube type, there exists a (maximal) set $\{ 2\e_1, \cdots, 2\e_r \} \subset \Sigma$ of strongly orthogonal roots, such that the set $\FS= \{ \alpha_1, \cdots, \alpha_r \}$ is a fundamental system of $\Sigma$, where $\alpha_i = \e_i - \e_{i+1}$ for $i < r$ and~$\alpha_r = 2 \e_r$. The system $\Sigma$ is then of type $C_r$ (see e.g.\ \cite{faraut1994analysis}):
\begin{equation*}
    \Sigma  = \{\pm \e_i \pm \e_j \mid 1 \leq i \leq j \leq r\}; \quad  \Sigma^+ = \{\e_i \pm \e_j \mid 1 \leq i < j \leq r\} \cup \{2 \e_i \mid 1 \leq i \leq r\}.
\end{equation*}

This is actually a characterization of $\HTT$ Lie groups:

\begin{fact}
    A simple Lie group $G$ is $\HTT$ if, and only if, its root system is of type $C_r$.
\end{fact}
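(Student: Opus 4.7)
The plan is to treat the two directions separately. The forward implication ($\HTT \Rightarrow$ type $C_r$) is essentially contained in the discussion preceding the Fact: the existence of a maximal set $\{2\e_1,\ldots,2\e_r\}$ of strongly orthogonal roots, together with the fundamental system $\alpha_i = \e_i - \e_{i+1}$ (for $i<r$) and $\alpha_r = 2\e_r$, realizes $\Sigma$ as the standard $C_r$ root system.

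For the converse, I would assume $\Sigma = \Sigma(\g,\aaa)$ is of type $C_r$. A direct inspection of the positive roots shows that each of them has $\alpha_r$-coefficient either $0$ (namely $\e_i - \e_j$ with $i<j$) or $1$ (namely $\e_i + \e_j$ with $i<j$, and $2\e_i$). Consequently, the element $H_0 \in \aaa$ defined by $\alpha_i(H_0) = \delta_{i,r}$ induces, via the eigenvalue decomposition of $\ad(H_0)$, a three-grading
\begin{equation*}
\g \,=\, \g_{-1} \oplus \g_0 \oplus \g_{+1},
\end{equation*}
with $\g_{\pm 1}$ abelian, since no positive root has $\alpha_r$-coefficient exceeding $1$.

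The next step is to invoke the classical Kaneyuki--Koecher correspondence (see for instance \cite{faraut1994analysis}) between simple three-graded real Lie algebras with abelian $\g_{\pm 1}$ and simple positive Hermitian Jordan triple systems: this endows $\g_{-1}$ with the structure of a Euclidean Jordan algebra of rank $r$, the Euclideanness coming from the reducedness of $\Sigma$ (i.e.\ the absence of short roots $\pm\e_i$ that would turn $C_r$ into $(BC)_r$). Under this correspondence, $\mathbb{X}_G$ is realized as the tube domain over the symmetric cone of this Jordan algebra, so that $G$ is $\HTT$.

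The main obstacle is to invoke the Jordan-theoretic machinery carefully: one must verify that the three-grading coming from $\alpha_r$ is of Hermitian type, i.e.\ that the grading element $H_0$, under the Cartan involution, yields a nonzero central element of $\kk$; this is precisely where the positivity/Euclideanness of the associated Jordan triple system enters. An alternative, purely classification-theoretic route would be to verify by inspection of Araki--Satake diagrams that the real simple Lie algebras whose restricted root system is of type $C_r$ (with long-root multiplicity one) coincide exactly with the simple $\HTT$ Lie algebras, namely $\mathfrak{su}(r,r)$, $\mathfrak{sp}(2r,\Rf)$, $\mathfrak{so}^\ast(4r)$, $\mathfrak{so}(2,n)$ (for $r=2$, $n\geq 3$), and $\mathfrak{e}_{7(-25)}$ (for $r=3$).
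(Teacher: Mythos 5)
The paper states this Fact without proof, as a cited classical characterization, so there is no paper argument to compare against; I can only evaluate your proposal on its own terms. Your forward direction is fine and indeed is just a restatement of the discussion preceding the Fact in the paper. The converse direction, however, contains a genuine gap that is not cosmetic.

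The step ``the Euclideanness coming from the reducedness of $\Sigma$'' is the point where the argument breaks. Reducedness of the restricted root system (i.e.\ being $C_r$ rather than $(BC)_r$) does \emph{not} imply that the Jordan triple system obtained from the three-grading $\g_{-1}\oplus\g_0\oplus\g_{+1}$ is positive Hermitian, nor that the grading descends, after a Cayley transform, to a nontrivial center of $\kk$. The concrete obstruction is $\g = \mathfrak{sp}(p,p)$: its restricted root system is of type $C_p$ (reduced, with $2\e_i$ of multiplicity $3$), so your construction of $H_0$ and the abelian three-grading applies verbatim, yet $\mathfrak{sp}(p)\oplus\mathfrak{sp}(p)$ has trivial center, so this Lie algebra is not Hermitian, let alone $\HTT$. (At rank one, $\mathfrak{so}(n,1)$ for $n\geq 3$, whose restricted root system is $A_1 = C_1$, gives another counterexample.) Thus the converse cannot follow from the root-system type alone; the missing hypothesis is precisely the one you quietly slip in at the very end under the alternative classification-theoretic route, namely that the long roots $2\e_i$ have multiplicity one. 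That multiplicity condition is what separates the $\HTT$ Lie algebras $\mathfrak{sp}(2r,\Rf),\ \mathfrak{su}(r,r),\ \mathfrak{so}^*(4r),\ \mathfrak{so}(2,n),\ \mathfrak{e}_{7(-25)}$ from $\mathfrak{sp}(r,r)$. In short: the Jordan-theoretic route as you have written it does not close, because it asserts without justification exactly the hypothesis that would need to be added to the Fact (long-root multiplicity one) for the converse to hold; a correct proof must either invoke that extra hypothesis explicitly or fall back on the direct inspection of Satake diagrams that you sketch as an alternative.
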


Recall the notation of Section~\ref{sect_root_syst_shilov}. Then:

\begin{lem}\label{lem_auxiliary_lemma} Let $G$ be an $\HTT$ Lie group. Let $\beta \in \Sigma_{\{\alpha_r\}}^+ \smallsetminus \{ \alpha_r\}$, and let $Y_{\beta} \in \g_{\beta}$. Then there exists $Z \in \operatorname{Lie}\left(P_{\{ \alpha_r\}}\right)$ such that $[v^-, Z] = 0$ and $\Ad (\exp\left(Y_{\beta}\right))v^- =  v^- +Z$.

\end{lem}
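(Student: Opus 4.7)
The plan is to compute $\Ad(\exp(Y_{\beta})) \cdot v^- = \sum_{k \geq 0} \frac{1}{k!}(\ad Y_{\beta})^k \cdot v^-$ and to determine, purely from the type-$C_r$ combinatorics recalled in Section~\ref{sect_root_syst_shilov}, which iterated brackets survive. Taking $v^-$ as the (fixed nonzero) vector in $\g_{-\alpha_r}$ of the standing setup, the $k$-th term lies in $\g_{k\beta - \alpha_r}$; the entire analysis therefore reduces to asking which of the weights $k\beta - \alpha_r$ lie in $\Sigma$. Recall that $\Sigma^+_{\{\alpha_r\}}$ consists of the positive roots with $\alpha_r$-coefficient equal to $1$, i.e.\ the ``long-direction'' roots $\e_i + \e_j$ with $1 \leq i \leq j \leq r$ (reading $2\e_i$ at $i = j$), so that $\Sigma^+_{\{\alpha_r\}} \smallsetminus \{\alpha_r\}$ is obtained by removing only $2\e_r = \alpha_r$.

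First I would dispose of the easy cases. A direct check in $C_r$ shows that, for such a $\beta$, the shift $\beta - \alpha_r = \beta - 2\e_r$ lies in $\Sigma$ \emph{only} when $\beta = \e_i + \e_r$ with $i < r$: for $\beta = \e_i + \e_j$ with $j < r$ or $\beta = 2\e_i$ with $i < r$, the expression $\beta - 2\e_r$ is not of the form $\pm \e_k \pm \e_l$. In every other case $[Y_{\beta}, v^-] = 0$, and therefore $\Ad(\exp Y_{\beta}) \cdot v^- = v^-$, so $Z = 0$ settles the lemma.

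It remains to treat $\beta = \e_i + \e_r$ with $i < r$. Here $2\beta - \alpha_r = 2\e_i \in \Sigma$, whereas $3\beta - \alpha_r = 3\e_i + \e_r \notin \Sigma$; hence $(\ad Y_{\beta})^k \cdot v^- = 0$ for all $k \geq 3$ and the exponential series truncates to
\[
\Ad(\exp Y_{\beta}) \cdot v^- \;=\; v^- + Z, \qquad Z \,:=\, [Y_{\beta}, v^-] + \tfrac{1}{2}\bigl[Y_{\beta}, [Y_{\beta}, v^-]\bigr].
\]
The two summands of $Z$ sit in $\g_{\e_i - \e_r}$ and $\g_{2\e_i}$ respectively, both positive-root spaces, so $Z \in \bigoplus_{\alpha \in \Sigma^+} \g_{\alpha} \subset \operatorname{Lie}(P_{\{\alpha_r\}})$, as required.

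Finally I would verify $[v^-, Z] = 0$ by one more root computation: the brackets $[v^-, [Y_{\beta}, v^-]]$ and $[v^-, [Y_{\beta}, [Y_{\beta}, v^-]]]$ take values in $\g_{\e_i - 3\e_r}$ and $\g_{2\e_i - 2\e_r}$ respectively, and neither weight belongs to $\Sigma$ (the first is not of the form $\pm \e_k \pm \e_l$, and $2(\e_i - \e_r)$ is twice a root rather than a root itself), so both brackets vanish. I do not foresee any substantive obstacle: the whole lemma is a short bookkeeping exercise in the $C_r$ root system, the only delicate point being the enumeration above of the $\beta \in \Sigma^+_{\{\alpha_r\}} \smallsetminus \{\alpha_r\}$ for which $\beta - \alpha_r$ is a root.
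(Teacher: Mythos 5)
Your proposal is correct and follows essentially the same route as the paper's own proof: expand $\Ad(\exp Y_{\beta})\cdot v^-$ as the exponential series, observe in type $C_r$ that $\beta - \alpha_r \in \Sigma$ forces $\beta = \e_i + \e_r$ (so all other cases give $Z = 0$), then in that case truncate the series at $k = 2$ since $3\beta - \alpha_r \notin \Sigma$, place the two summands of $Z$ in $\g_{\e_i - \e_r} \oplus \g_{2\e_i} \subset \operatorname{Lie}(P_{\{\alpha_r\}})$, and check $[v^-, Z] = 0$ because $\e_i - 3\e_r$ and $2\e_i - 2\e_r$ are not roots.
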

\begin{proof}[Proof] One has 
\begin{equation*}
    \Ad \left(\exp\left(Y_{\beta}\right) \right)v^-  = \exp\left(\ad(Y_{\beta})\right) v^- =  \sum_{k = 0}^{\infty} \frac{\ad(Y_{\beta})^k v^-}{k!}.
\end{equation*}

We compute the terms $\ad(Y_{\beta})^k v^-$ for $k \in \mathbb{N}$. Recall that $\Sigma$ is of type $C_r$. In the notation of Section~\ref{sect_recalls_lie_groups}, there are thus several possible values for $\beta$:

If $\beta = 2\e_i$ for some $1 \leq i \leq r-1$ or $ \e_i +\e_j$ for some $1 \leq i < j \leq r-1$, then~$\beta-\alpha_r \notin \Sigma$, so $[Y_{\beta}, v^-] \in \g_{\beta - \alpha_r} = \{ 0 \}$. Hence for any $k \geq 1$, one has $\ad(Y_{\beta})^k v^- = 0$, and so~$\Ad(e^{Y_{\beta}})v^- = v^-$. In this case, we have $Z = 0$.

If $\beta = \e_i + \e_r$ for some $1 \leq i < r$, then  $3 \beta - \alpha_r = 3\e_i +\e_r \notin \Sigma$. Hence for any $k \geq 3$ one has $\ad(Y_{\beta})^k v^- = 0$. From this we deduce that $e^{\ad(Y_{\beta})}v^- = v^- +Z$, with 
$$Z := [Y_{\beta}, v^-] + \frac{1}{2}\left[Y_{\beta}, [Y_{\beta}, v^-]\right] \in \g_{\beta - \alpha_r} \oplus \g_{2 \beta- \alpha_r} = \g_{\e_i - \e_r} \oplus \g_{2\e_i} \subset \operatorname{Lie}\left(P_{\{ \alpha_r\}}\right)$$
by~\eqref{eq_lie_P}. Moreover, one has $[v^-, Z] \in \g_{\e_i - 3\e_r} \oplus \g_{2(\e_i-e_r)} = \{0\}$. $\qed$
\end{proof}

If $\Theta = \{\alpha_r\}$, then the flag manifold $\Fl_{\Theta}$ is called the \emph{Shilov boundary} of $\mathbb{X}_G$ and we will denote it by $\SB(G)$. 

Since $\FS$ is of type $C_r$, its automorphism group is trivial (see e.g.\ \cite{knapp1996lie}). Hence the opposition involution is trivial and the flag manifold $\SB(G)$ is self-opposite.
Moreover, the groups $\Aut_1(\g)$ and $\Aut(\g)$ coincide, hence the group $\Aut(\g)$ acts on $\Fl_{\{\alpha_r\}}$.

\subsubsection{Notation. }\label{sect_notation} When $G$ is an $\HTT$ Lie group and $\Theta = \{ \alpha_r\}$, we will always use the following simplified notation: $\uu^{\pm} = \uu_{\{\alpha_r\}}^{\pm}$, $\mathfrak{l} = \mathfrak{l}_{\{\alpha_r\}}$, $U^{\pm} = U_{\{\alpha_r\}}^{\pm}$, $P = P_{\Theta}$, $L=  L_{\Theta}$. Note that the Lie algebras $\uu^{\pm}$ are abelian.

Since $P$ is a maximal proper parabolic subgroup of $G$, the center of $L$ is at most one-dimensional, and one can write $L = \mathbb{R} \times L_s$, where $L_s$ is the semisimple part of $L$. The possible values of the Lie algebra $\mathfrak{l}_s$ of $L_{s}$ are listed in Table \ref{table_shilov_bnds}. 

\subsection{Dilations and translations.}\label{section_structure_shilov} Let $G$ be an $\HTT$ Lie group. There exists $H_0$ in the center of $\mathfrak{l}$ such that $\uu^{\pm}$ is the root space of $\ad(H_0)$ for the eigenvalue $\pm 1$ (see e.g. \cite{kaneyuki1998sylvester}). For all $t \in \Rf_{>0}$ we define $   \ell_0(t) = \exp \left(-\log\left(\sqrt{t}\right)H_0\right) \in L$. The map $\ell_0$ continuously extends to $\operatorname{id}$ at $0$. The element $\Ad(\ell_0(t))$ acts on $\uu^{\pm}$ by
    \begin{equation}\label{eq_def_lzero}
    \Ad(\ell_0(t))X =
    \begin{cases}
         t X \quad &\forall X \in \uu^-; \\
    \frac{1}{t} X \quad &\forall X \in \uu^+.
    \end{cases}
    \end{equation}

 Hence any positive dilation of $\Affstd$ (see~\eqref{eq_standard_chart}) at $P = \varphi_{\mathsf{std}}(0)$ can be realized as the restriction to $\Affstd$ of a map of the form $x \mapsto \ell_0(t) \cdot x$ of $\SB(G)$ for some $t \in \Rf_{>0}$. 

Moreover, since $\uu^-$ is abelian, any translation in $\Affstd$ is realized as left multiplication by an element of $U^- \leq G$. Each time we will talk about \emph{a translation} in $\Affstd$, it will mean that we apply a multiplication by an element of $U^-$. 

According to the two previous paragraphs, for any dilation $d$ at a point $x_0 \in \Affstd$, there exists $g \in G$ such that $d$ coincides with the restriction of the map~$x \mapsto g \cdot x$ of $\SB(G)$ to $\Affstd$. Each time we will talk about \emph{dilating at $x_0 $ in} $\Affstd$, it will mean that we apply such a map.

\begin{rmk} It is not true for a general simple Lie group $G$ and subset of the simple roots $\Theta$ that there exists $H_0 \in \mathfrak{l}_{\Theta}$ such that $\ad(H_0)X^{\pm} = \pm X^{\pm}$ for all $X^{\pm} \in \uu_{\Theta}^{\pm}$. This property is equivalent to $\uu^{\pm}_{\Theta}$ being abelian. When this is the case, the algebra $\g$ admits a decomposition $\g = \g_{-1} \oplus \g_{0} \oplus \g_{1}$, with $\g_{-1} = \uu_{\Theta}^-$, $\g_0 = \mathfrak{l}_{\Theta}$ and $\g_{1} = \uu_{\Theta}^+$, and~$[\g_k,\g_{k'}] \subset \g_{k+k'}$ for $k,k' \in \{-1, 0, 1\}$, with $\g_m := \{ 0\}$ if $m \notin \{-1, 0,1 \}$. The element $H_0$ is then called the \emph{characteristic element} of the graded Lie algebra $\g$, that is, each space $\g_k$ with $k \in \{-1, 0, 1\}$ is the eigenspace of $\ad(H_0)$ for the eigenvalue $k$.
\end{rmk}

\subsection{An invariant cone and causality} \label{sect_causalité} The identity component $L^0$ of $L$ acts irreducibly on $\uu^-$. By \cite[Prop.\ 4.7]{benoist2000automorphismes} applied to this action, there exists an open~$L^0$-invariant properly convex cone $c^0$ in $\uu^-$. This cone is defined as the interior of the convex hull in $\uu^-$ of the orbit $\Ad(L^0) \cdot v^-$. Note that the existence of such a cone endows $\SB(G)$ with an invariant causal structure (see \cite{kaneyuki2006causal}). 

For any $x \in \Affstd$ (recall~\eqref{eq_standard_chart})), there exists a unique $X \in \uu^-$ such that $x = \exp(X)P$. We define 
\begin{align*}
    &\mathbf{I}^+(x) := \varphi_{\mathsf{std}}(X + c^0) = \exp(c^0) \cdot x \text{, the \emph{future} of }x; \\
        &\mathbf{I}^-(x) := \varphi_{\mathsf{std}}(X - c^0) = \exp(-c^0) \cdot x\text{, the \emph{past} of }x; \\
        &\mathbf{J}^+(x) := \varphi_{\mathsf{std}}(X + \overline{c^0}) = \exp(\overline{c^0}) \cdot x \text{, the \emph{large future} of }x; \\
        &\mathbf{J}^-(x) := \varphi_{\mathsf{std}}(X - \overline{c^0})= \exp(-\overline{c^0}) \cdot x \text{, the \emph{large past} of }x; \\
        &\mathbf{C}^+(x) := \varphi_{\mathsf{std}}(X + \partial c^0) = \exp(\partial c^0) \cdot x \text{, the \emph{future lightcone} of }x; \\
        &\mathbf{C}^+(x) := \varphi_{\mathsf{std}}(X - \partial c^0) = \exp(-\partial c^0) \cdot x  \text{, the \emph{past lightcone} of }x; \\
       & \mathbf{C}(x) := \mathbf{C}^+(x) \cup \mathbf{C}^- (x) \text{, the \emph{lightcone} of }x. 
\end{align*}

The following fact is well-known:

\begin{fact}\label{fact_cone_in_schubert_subvrty}
The lightcone $\mathbf{C}(x)$ of $x \in \mathbb{A}$ is always contained in $\hyp_x \cap \mathbb{A}$, and $\I^{\pm}(x)$ are connected components of $\Affstd \smallsetminus \hyp_x$.
\end{fact}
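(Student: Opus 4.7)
First, I would reduce to $x = P$ by $U^-$-equivariance: any $x \in \Affstd$ equals $u_X \cdot P$ for the unique $u_X = \exp(X) \in U^-$ with $x = \varphi_{\mathsf{std}}(X)$, and since $u_X$ preserves $\Affstd$ and transversality in $\SB(G)$ is $G$-invariant, one has $\hyp_x = u_X \cdot \hyp_P$, $\mathbf{C}(x) = u_X \cdot \mathbf{C}(P)$ and $\I^{\pm}(x) = u_X \cdot \I^{\pm}(P)$. Hence it suffices to prove both assertions at $P$.

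Through $\varphi_{\mathsf{std}}$, the set $\hyp_P \cap \Affstd$ identifies with a closed real-algebraic subset $Z \subset \uu^-$ containing $0$. This $Z$ is $\Ad(L)$-invariant (since $L$ stabilizes $P$), invariant under positive scaling via the Levi dilations $\ell_0(t)$ of~\eqref{eq_def_lzero}, and invariant under $Y \mapsto -Y$ (by symmetry of transversality, $\exp(-Y) \cdot P$ is transverse to $P$ iff $P$ is transverse to $\exp(Y) \cdot P$). The fact at $P$ then reduces to the two claims $c^0 \cap Z = \emptyset$ and $\partial c^0 \subset Z$ (the analogous statements for $-c^0$ following by the $Y \mapsto -Y$ symmetry): granted these, $c^0$ is an open connected subset of $\uu^- \setminus Z$ with topological boundary in $Z$, and is therefore a full connected component.

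To establish these two claims, I would invoke the $\HTT$/Euclidean Jordan algebra structure on $\uu^-$. Pick the $r$ pairwise commuting $\mathfrak{sl}_2$-triples $(e_i, h_i, f_i)$ associated with the strongly orthogonal roots $2\e_i$, with $f_i \in \g_{-2\e_i} \subset \uu^-$ (commutativity follows from $[\g_{\pm 2\e_i}, \g_{\pm 2\e_j}] = [\g_{2\e_i}, \g_{-2\e_j}] = 0$ for $i \ne j$, since $2\e_i \pm 2\e_j \notin \Sigma$). These generate a commuting product $\SL_2(\Rf)^r \hookrightarrow G$, whose orbit of $P$ in $\SB(G)$ is the polycircle $(\mathbb{P}(\Rf^2))^r$, with $P$ corresponding to $([1{:}0], \ldots, [1{:}0])$ (since each $\SL_2(\Rf)_i$-stabilizer of $P$ is the Borel of upper-triangular matrices, thanks to $e_i, h_i \in \operatorname{Lie}(P)$ and $f_i \notin \operatorname{Lie}(P)$). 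A classical feature of the polydisc realization of Hermitian symmetric spaces of tube type is that transversality in $\SB(G)$ of two polycircle points is factor-wise. Combined with the direct $\SL_2(\Rf)$-computation $\exp(tF) \cdot [1{:}0] = [1{:}{-}t]$, this gives that $\exp(\sum_{i=1}^r t_i f_i) \cdot P$ is transverse to $P$ if and only if every $t_i \ne 0$. By the spectral theorem for Euclidean Jordan algebras, every element of $\overline{c^0}$ is $\Ad(L^0)$-conjugate to some $\sum_i t_i f_i$ with $t_i \geq 0$, lying in $c^0$ precisely when all $t_i > 0$. Combining with the $\Ad(L^0)$-invariance of $Z$ yields $c^0 \cap Z = \emptyset$ and $\partial c^0 \subset Z$.

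The main obstacle is invoking the two classical Jordan-algebraic inputs used above: the factor-wise description of transversality on the polycircle, and the spectral decomposition of $\overline{c^0}$ as the closure of $\Ad(L^0) \cdot \{\sum_i t_i f_i : t_i > 0\}$. Both are standard for $\HTT$ Lie algebras (see e.g.\ \cite{faraut1994analysis}) but lie outside the material developed so far in the paper; the cleanest route is simply to cite them.
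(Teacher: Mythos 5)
The paper does not actually prove this statement: it is introduced with the phrase ``The following fact is well-known'' and stated without argument, so there is no proof of record to compare against. (The explicit computations of $\mathbf{C}(P)$ and $\hyp_P \cap \Affstd$ for the Lagrangian Grassmannians and the Einstein universe in Section~\ref{ex_shilov_bndrs} amount to a case-by-case verification, but no general argument is given.)

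Your proposal is a correct general proof. The reduction to $x = P$ by $U^-$-equivariance, the observation that $Z := \varphi_{\mathsf{std}}^{-1}(\hyp_P \cap \Affstd)$ is closed, $\Ad(L)$-invariant, dilation-invariant, and $\pm$-symmetric, and the reduction to the two claims $c^0 \cap Z = \emptyset$ and $\partial c^0 \subset Z$ followed by the topological argument that an open connected set with boundary inside $Z$ is a full component of $\uu^- \smallsetminus Z$, are all sound. The two Jordan-algebraic inputs you invoke --- that the $r$ commuting $\mathfrak{sl}_2$-triples attached to the strongly orthogonal roots $2\e_i$ give a polycircle on which transversality in $\SB(G)$ is factor-wise, and that every element of $\overline{c^0}$ is $L^0$-conjugate to some $\sum_i t_i f_i$ with $t_i \geq 0$ (the spectral theorem for the Euclidean Jordan algebra $\uu^-$, with $(f_i)$ a Jordan frame) --- are indeed classical for $\HTT$ Lie algebras, and a citation to Faraut--Korányi is the right move. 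Two small points worth being explicit about if this were to be included: the identification of the $\SL_2(\Rf)^r$-orbit of $P$ with the full polycircle requires checking that the stabilizer in each $\SL_2$ factor is the full Borel and not just its identity component (as in the paper's Lemma~\ref{lem_stab_parap}), and one should note that the paper's definition of $c^0$ as the interior of the convex hull of $\Ad(L^0)\cdot v^-$ agrees with the symmetric cone of the Jordan algebra, which is again standard but not proved in the text. Neither is a real gap. In effect you have supplied, via the uniform Jordan-algebraic framework, the general argument that the paper leaves implicit and verifies only in its two worked-out examples.
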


 The past, the future and the lightcone of a point $x \in \Affstd$ are not invariant under the stabilizer  $\operatorname{Stab}_G(x)$ of $x$ in $G$. However, they are locally invariant:

\begin{lem}\label{lem_loc_action_U+}
    Let $x \in \Affstd$ and $g \in G$ be such that $g \cdot x \in \Affstd$. Then for any $\delta_1 \in \{-, +\}$, there exist $\delta_2 \in \{-, +\}$ and a neighborhood $\mathcal{U}$ of $x$ such that $g \cdot (\mathcal{U} \cap \mathbf{I}^{\delta_1}(x)) \subset \mathbf{I}^{\delta_2}(g \cdot x)$.
\end{lem}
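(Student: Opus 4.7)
The plan is to use the $G$-equivariance, up to sign, of the causal structure on $\SB(G)$. By the discussion in Section~\ref{sect_causalité}, there exists an index-two subgroup $G_0 \leq G$ such that the smooth cone field $(c_y)_{y\in \SB(G)}$ (equal to $c^0$ in the standard affine chart, for every $y \in \Affstd$) satisfies $d_y g(c_y) = c_{g\cdot y}$ whenever $g \in G_0$, and $d_y g(c_y) = -c_{g\cdot y}$ otherwise. Setting $\epsilon(g) \in \{+1, -1\}$ accordingly, I will prove the lemma with $\delta_2 := \epsilon(g)\delta_1$.

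Assuming $g \cdot x \in \Affstd$ (which is needed for $\I^{\delta_2}(g \cdot x)$ to be defined, and which follows from $g \cdot P \in \Affstd$ together with continuity when $x$ lies close enough to $P$), I would take $\mathcal{U}$ to be a convex Euclidean ball around $x$ inside $\Affstd$, chosen small enough that $g(\mathcal{U}) \subset \Affstd$; this is possible by continuity of $g$, since $\Affstd$ is open and $g \cdot x \in \Affstd$. For any $y \in \mathcal{U} \cap \I^{\delta_1}(x)$, write $y = x + V$ with $V \in \delta_1 c^0$. The segment $\gamma(t) := x + tV$, $t \in [0,1]$, is contained in $\mathcal{U}$ by convexity, and is a causal curve in $\Affstd$ with constant tangent $\dot\gamma(t) = V \in \delta_1 c^0$. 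Its image $g \circ \gamma$ is a smooth curve in $\Affstd$ from $g\cdot x$ to $g\cdot y$, whose tangent vector at $\gamma(t)$ is $d_{\gamma(t)}g(V)$; by the cone equivariance, this vector lies in $\delta_2 c^0$.

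The conclusion then follows from the identity
\begin{equation*}
    g \cdot y - g \cdot x = \int_0^1 d_{\gamma(t)}g(V)\, dt,
\end{equation*}
computed in the affine chart $\Affstd$: the integrand is a continuous map $[0,1] \to \delta_2 c^0$, whose compact image is bounded away from the boundary $\partial c^0$, so that the integral lies in the open cone $\delta_2 c^0$. Hence $g \cdot y \in g \cdot x + \delta_2 c^0 = \I^{\delta_2}(g\cdot x)$, as desired. The main obstacle, apart from invoking the $G$-equivariance-up-to-sign of the cone field carefully, is verifying that a continuous function into an open convex cone has its integral in the cone; this uses a compactness argument and the convexity of $c^0$.
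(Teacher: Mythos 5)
Your argument is correct in substance and takes a genuinely different route from the paper's. The paper first reduces to $x = P$, writes $g = \exp(X)\,\ell\,\exp(Y)$ with $X \in \uu^-$, $\ell \in L$, $Y \in \uu^+$, and handles each factor separately: the $\exp(Y)$ part is controlled using the scaling maps $\ell_0(t)$ together with a connectedness argument showing that $\exp(tY)$ carries $\mathcal{U} \cap \mathbf{I}^{+}(P)$ into a single component of $\Affstd \smallsetminus \hyp_P$ for all $t \in [0,1]$; the $\ell$ part uses the cited dichotomy that $\ell \in L$ either preserves or swaps $\mathbf{I}^{\pm}(P)$; and $\exp(X)$ is a translation. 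You bypass this decomposition entirely by treating $g$ as a causal diffeomorphism and integrating along an affine segment, which isolates precisely the invariance that matters and is arguably cleaner and more transparent. The trade-off is that you lean on the smooth $G$-equivariance (up to sign) of the cone field $(c_y)_{y \in \SB(G)}$, which the paper asserts in the introduction (citing \cite{kaneyuki2006causal}) but does not derive in the Lie-theoretic setup of Section 3; the paper's proof is self-contained modulo the reference for the action of $L$ on $\mathbf{I}^{\pm}(P)$. Three small points could be tightened in your write-up. First, the sign $\epsilon(g)$ need not come from an index-two subgroup of $G$ (for $G$ connected such a subgroup would be all of $G$); it suffices to note that $y \mapsto d_y g(c^0) \in \{c^0, -c^0\}$ is continuous on the connected set $\mathcal{U}$, hence constant, and call that constant sign $\epsilon$. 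Second, the reason $\int_0^1 d_{\gamma(t)}g(V)\,dt$ lies in the \emph{open} cone is not simply that the image is bounded away from $\partial c^0$ (the cone is unbounded): the correct reason is that the continuous image $\{d_{\gamma(t)}g(V) : t \in [0,1]\}$ is compact, so its convex hull is a compact subset of the convex set $\delta_2 c^0$, and the integral, as a limit of Riemann sums lying in that convex hull, lies in $\delta_2 c^0$. Third, the assertion $g \cdot x \in \Affstd$ does not follow from $g \cdot P \in \Affstd$ by "continuity when $x$ lies close to $P$" (the point $x$ is arbitrary in $\Affstd$, not assumed close to $P$); it is obtained, as in the paper, by the normalization that replaces the pair $(x,g)$ with $(P, g\exp(X))$ where $x = \exp(X)P$, after which the hypothesis reads $g\exp(X)P = g\cdot x \in \Affstd$.
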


\begin{proof}[Proof] Noticing that $\exp(X) \cdot \I^{\delta_1}(P) = \I^{\delta_1}(\exp(X)P)$ for all $X \in \uu^-$, we may assume that $x = P$. Let us prove the lemma for $\delta_1 = +$, the proof being the same for $\delta_1 = -$. 

Since $g P \in \Affstd$, by~\eqref{eq_A_egal_exp} one can write $g = g' \exp(Y)$, with $Y \in \uu^+$ and $g' \in P^-$. There exists a neighborhood $\mathcal{U}$ of $P$, convex in $\Affstd$, such that $g \cdot \mathcal{U} \subset \Affstd$. Hence we have~$\exp(Y) \cdot \mathcal{U} \subset (g')^{-1} \cdot \Affstd = \Affstd$. Recall the map $\ell_0: \mathbb{R}_{\geq 0} \rightarrow L$ defined in Section~\ref{section_structure_shilov}. Since $\mathcal{U}$ is convex, by~\eqref{eq_def_lzero}, one has $\ell_0(t) \cdot \mathcal{U} \subset \mathcal{U}$ for all $t \in [0,1]$. Then:
\begin{equation}\label{eq_Y_stab_U}
    \exp(t Y) \cdot \mathcal{U} = \ell_0(t)^{-1} \exp(Y) \ell_0(t) \cdot \mathcal{U} \subset \ell_0(t)^{-1} \exp(Y) \cdot \mathcal{U} \subset \ell_0(t)^{-1} \cdot \Affstd \subset \Affstd.
\end{equation}
Since $\mathcal{U}$ and $\mathbf{I}^+(P)$ are both convex, the set $\mathcal{U} \cap \mathbf{I}^+(P)$ is connected. For this reason, by~\eqref{eq_Y_stab_U} and since $U^+$ stabilizes $\hyp_P$, the set $\exp(t Y) \cdot (\mathcal{U} \cap \mathbf{I}^+(P))$ is contained in a connected component of $\Affstd \smallsetminus \hyp_P$ for all $t \in [0,1]$, let us denote this component by $\mathcal{V}$. By continuity, this component $\mathcal{V}$ does not depend on $t$. In particular, for $t = 0$, one has $\exp(t Y) = \operatorname{id}$, so $\mathcal{V} = \mathbf{I}^+(P)$. Hence $\exp(tY) \cdot  (\mathcal{U} \cap \mathbf{I}^+(P)) \subset \mathbf{I}^+(P)$. 

Now, since $g' \in P^-$, there exist $X \in \uu^-$ and $\ell \in L$, such that $g' = \exp(X) \ell$. But the element $\ell \in L$ either preserves $\mathbf{I}^+(P)$ or maps it to $\mathbf{I}^-(P)$ (see e.g.\ \cite[Cor.\ 5.3]{guichard2022generalizing}). On the other hand, since $U^-$ is abelian, one has $\exp(X)\cdot \I^{\pm}(P) = \I^+(\exp(X)P)$. Then:
\begin{align*}
g \cdot (\mathcal{U} \cap \I^+(P)) &\subset  g' \cdot  \I^+(P) \\ &= \exp(X) \ell \cdot  \I^+(P)\\ 
&=     \begin{cases}
\I^+(\exp(X)P) = \I^+(gP) \quad \text{if } \ell \cdot \mathbf{I}^+(P) = \mathbf{I}^+(P); \\
 \I^-(\exp(X)P) = \I^-(gP) \quad  \text{if } \ell \cdot \mathbf{I}^+(P) = \mathbf{I}^-(P).
    \end{cases} \qed
\end{align*}
\end{proof}

 The future and the past of a point satisfy the following causality properties:
\begin{fact}\label{lem_causality_order_relation}
    For all $x,y,z \in \Affstd$, one has:
    \begin{enumerate}
        \item [*]\emph{(reflexivity)} $x \in \mathbf{J}^+(y)\Leftrightarrow y \in \mathbf{J}^-(x)$;
        \item [*]\emph{(antisymmetry)} $[x \in \mathbf{J}^+(y) \text{ and } y \in \mathbf{J}^+(x)] \Rightarrow x=y$;
        \item [*]\emph{(transitivity)} $[x \in \mathbf{J}^{\pm}(y) \text{ and } y \in \mathbf{J}^\pm(z) \text{ } ] \Rightarrow x \in \mathbf{J}^{\pm} (z)$
    \end{enumerate}
These properties are also true replacing ``$\mathbf{J}$" with ``$\mathbf{C}$". Reflexivity and transitivity are also true replacing ``$\mathbf{J}$" with ``$\mathbf{I}$".
\end{fact}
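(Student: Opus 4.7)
The plan is to reduce everything to elementary properties of the open properly convex cone $c^0 \subset \uu^-$ by working entirely in the affine chart $\Affstd$ via the exponential parametrization. Concretely, the bijection $\varphi_{\mathsf{std}}: \uu^- \to \Affstd$ from~\eqref{eq_A_egal_exp} identifies $\Affstd$ with the abelian Lie algebra $\uu^-$, and since $\uu^-$ is abelian the action of $U^-$ on $\Affstd$ coincides under $\varphi_{\mathsf{std}}$ with translation on $\uu^-$ (as recalled in Section~\ref{section_structure_shilov}). Writing $x = \varphi_{\mathsf{std}}(X)$, $y = \varphi_{\mathsf{std}}(Y)$, $z = \varphi_{\mathsf{std}}(Z)$, the definitions in Section~\ref{sect_causalité} immediately rewrite as
\begin{equation*}
x \in \mathbf{J}^{\pm}(y) \iff X - Y \in \pm\overline{c^0}, \quad x \in \mathbf{I}^{\pm}(y) \iff X - Y \in \pm c^0, \quad x \in \mathbf{C}^{\pm}(y) \iff X - Y \in \pm \partial c^0.
\end{equation*}

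With this dictionary, each of the three causality properties becomes a purely linear statement about the subset $K \in \{c^0, \overline{c^0}, \partial c^0\}$ of $\uu^-$. Reflexivity $x \in \mathbf{J}^+(y) \Leftrightarrow y \in \mathbf{J}^-(x)$ is the tautology $X - Y \in \overline{c^0} \Leftrightarrow Y - X \in -\overline{c^0}$, and the analogous tautology handles the $\mathbf{I}^{\pm}$ and $\mathbf{C}^{\pm}$ cases. Transitivity becomes the implication $(X-Y), (Y-Z) \in K \Rightarrow (X-Z) = (X-Y) + (Y-Z) \in K$, which holds for $K = \overline{c^0}$ and $K = c^0$ because these are convex cones, and hence closed under addition. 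Antisymmetry becomes $(X-Y) \in \overline{c^0}$ and $(Y-X) \in \overline{c^0}$ imply $X = Y$, which is exactly the statement that $\overline{c^0} \cap (-\overline{c^0}) = \{0\}$, i.e.\ the proper convexity of $c^0$ recalled in Section~\ref{sect_causalité}.

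The main (mild) point of care is the $\mathbf{C}^{\pm}$ antisymmetry: there the hypothesis gives $X - Y \in \partial c^0$ and $Y - X \in \partial c^0$, but since $\partial c^0 \subset \overline{c^0}$, we reduce to the $\overline{c^0}$ case and conclude by proper convexity again. Note that antisymmetry is not claimed for $\mathbf{I}^{\pm}$, and indeed the corresponding statement is vacuous since $0 \notin c^0$ forbids $X - Y$ and $Y - X$ from both lying in $c^0$. Altogether, each item follows from a one-line translation into $\uu^-$ combined with the standard convex-cone property of $c^0$, and no further input beyond Section~\ref{section_structure_shilov} and the definition of $c^0$ is needed.
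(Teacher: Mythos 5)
The paper states Fact~\ref{lem_causality_order_relation} without a proof, so there is no paper argument to compare against; I evaluate your proposal on its own merits. The linearization into $\uu^-$ via $\varphi_{\mathsf{std}}$ and the reduction to elementary convex-cone properties of $c^0$ is the right approach, and your dictionary is correct. Reflexivity for all three relations, antisymmetry for $\mathbf{J}$ and $\mathbf{C}$ (via $\overline{c^0}\cap(-\overline{c^0})=\{0\}$), and transitivity for $\mathbf{J}$ and $\mathbf{I}$ (via closure of the convex cones $\overline{c^0}$ and $c^0$ under addition) are all correctly handled, and your remark that $\mathbf{I}$-antisymmetry is vacuous because $c^0\cap(-c^0)=\emptyset$ is accurate.

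However, you conclude that ``each item follows'' while silently skipping transitivity for $\mathbf{C}$, which the statement does claim via ``These properties are also true replacing $\mathbf{J}$ with $\mathbf{C}$.'' Your method cannot prove it, because $\partial c^0$ is not closed under addition once the real rank is $\geq 2$: for the Lorentz cone of $\SO(n,2)$, the sum of two linearly independent null rays is timelike; for $\Sp(2r,\mathbb{R})$, the sum of the singular positive semidefinite matrices $\operatorname{diag}(1,0)$ and $\operatorname{diag}(0,1)$ is $I_2$, which is in the interior. Thus $x\in\mathbf{C}^+(y)$ and $y\in\mathbf{C}^+(z)$ only give $X-Z\in\overline{c^0}$, i.e.\ $x\in\mathbf{J}^+(z)$, and not $x\in\mathbf{C}^+(z)$ in general. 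You should treat this case explicitly, just as you flagged the vacuity of $\mathbf{I}$-antisymmetry: either point out that $\mathbf{C}$-transitivity is not provable by cone additivity and appears to fail for rank $\geq 2$, or note that the Fact is presumably meant to claim only reflexivity and antisymmetry for $\mathbf{C}$. As written, your final sentence overclaims.
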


\subsection{Examples}\label{ex_shilov_bndrs}

The complete list of Shilov boundaries associated with $\HTT$ Lie algebras is given in Table \ref{table_shilov_bnds}.

    \begin{table}[H]
        \centering
        \begin{tabular}{|c|c|c|}
        \hline
           $\g$   & $\SB(G)$ & $\mathfrak{l}_{s}$ \\
         \hline
             $\soo(2,n)$, $n \geq 3$ &  $\Ein^{n-1,1}$& $\soo(n-1,1)$  \\
           \hline
              $\spp(2r, \mathbb{R})$ & $\Lag_r(\mathbb{R}^{2r})$&$\mathfrak{sl}(r, \mathbb{R})$  \\
           \hline
              $\uu(r,r)$  &$\Lag_r(\mathbb{C}^{2r})$&  $\mathfrak{sl}(r, \mathbb{C})$  \\
           \hline
               $\soo^*(4r)$  & $\Lag_r(\mathbb{H}^{2r})$&   $\mathfrak{sl}(r, \mathbb{H})$  \\
           \hline
       $\mathfrak{e}_{7(-25)}$ &  $\Lag_3(\mathbb{O}^{6})$ &  $\mathfrak{e}_{6(-26)}$  \\
           \hline
        \end{tabular}
        \caption{Shilov boundaries associated with all the $\HTT$ Lie algebras.} \label{table_shilov_bnds}
    \end{table}

 Let us explain the notations in the table. For the notation $\mathfrak{e}_{7(-25)}$ and $\mathfrak{e}_{6(-26)}$, see \cite{faraut1994analysis}.
 
\subsubsection{The Lagrangians.} \label{sect_lag} Let $\Kf = \Rf, \Cf$ or $\Hf$, and let $r \geq 2$. We will define the set of Lagrangians as a submanifold of the space $\GRK$ of all the $r$-dimensional vector spaces of $\Kf^{2r}$. Let
\begin{equation*}
    J_{\Kf} = \begin{pmatrix}
        0 & -I_r \\ I_r & 0
    \end{pmatrix} \in \GL(2r, \Kf),
\end{equation*}
where $I_r$ is the identity matrix of size $r$. Given a matrix $g \in M_{2r}(\Kf)$, we denote by $\overline{g}$ the matrix whose $(i,j)$-th entry is the conjugate (in $\Kf$) of the $(i,j)$-th entry of $g$. Let~$G_{\Kf} := \left\{g \in \SL(2r, \Kf) \mid   \ ^t \overline{g} J_{\Kf} g = J_{\Kf}\right\}$. One has $G_{\Rf}= \Sp(2r, \Rf)$, $G_{\Cf} = \SU(r,r)$ and $G_{\Hf} = \SO^* (2r)$.

Let $\mathbf{b}$ be the bilinear form whose matrix in the canonical basis of $\Kf^{2r}$ is $J_{\Kf}$. The space $\Lag_r(\Kf^{2r})$ is the space of Lagrangians of $(\Kf, \mathbf{b})$, i.e.\ the space of totally isotropic~$r$-planes of $\Kf^{2r}$. It is a model for $\SB(G_{\Kf})$.

If $(e_1, \cdots e_{2r})$ is the canonical basis of $\Kf^{2r}$, then $\SB(G_{\Kf}) = G_{\Kf}/P$, where $P$ is the stabilizer in $G_{\Kf}$ of $\xi_0 :=\operatorname{Span}(e_1, \cdots , e_r)$, and $P^{\opp}$ is the stabilizer in $G_{\Kf}$ of $\xi_{\infty} := \operatorname{Span} (e_{r+1}, \cdots, e_{2r})$. This model gives the following descriptions of $\uu^{\pm}$ and $L$:
\begin{align*}
     \uu^- &= \left\{ \begin{pmatrix} 0_r & 0 \\ X & 0_r\end{pmatrix} \mid X \in  \Mat_{r}(\Kf), \ ^t \overline{X} = X \right\}; \\
        \uu^+ &= \left\{ \begin{pmatrix} 0_r & X \\ 0 & 0_r\end{pmatrix} \mid X \in  \Mat_{r}(\Kf), \ ^t \overline{X} = X \right\};\\
        L &= \big{\{} \operatorname{diag}(A, \ ^t\overline{A}^{-1}) \mid A \in  \GL(r, \Kf)\big{\}}. 
\end{align*}
Hence the affine chart $\Affstd = \Lag_r(\Kf^{2r}) \smallsetminus \hyp_{P^{\opp}} $ can be described as follows:
\begin{align*}
  \Affstd  = \left\{\operatorname{Im} \begin{pmatrix}
    I_r \\ X
    \end{pmatrix} \ \Big{\vert} \ X \in \Mat_{r} (\mathbb{K}), \ ^t \overline{X} = X\right\}.  
\end{align*}

Then a computation gives:
\begin{equation*}
    \begin{split}
         \Affstd \cap \hyp_{P} &= \left\{ \operatorname{Im}\begin{pmatrix}
        I_r \\ X
    \end{pmatrix} \ \Big{\vert} \ 
    ^t \overline{X} = X, \ \det(X) = 0\right\}; \\
     \mathbf{C}(P) &= \left\{ \operatorname{Im}\begin{pmatrix}
        I_r \\ X
    \end{pmatrix} \ \Big{\vert} \ ^t \overline{X}= X, \  \det(X) = 0 \text{ and } ^t \overline{x} X x \in \mathbb{R}_{\geq 0} \quad \forall x \in \Kf^{r} \right\}.
    \end{split}
\end{equation*}
Note that if $r \geq 3$, then the inclusion $\mathbf{C}(P) \subset \Affstd \cap \hyp_{P}$ is strict.

\subsubsection{The Einstein Universe.}\label{sect_Einstein_universe} Let $n \geq 2$ and let $(e_1, \cdots, e_{n+2})$ be the canonical basis of $\mathbb{R}^{n+2}$. For any vector $v \in \mathbb{R}^{n+2}$, we denote by $v_i$ the $i$-th coordinate of $v$, that is~$v = \sum_{i=1}^{n+2} v_i e_i$. Let $\b$ be the quadratic form of signature $(n,2)$ on $\mathbb{R}^{n+2}$ defined as:
\begin{equation*}
    \b (v,w) = \sum_{i=1}^{n-1} v_i w_i - v_n w_n - \frac{1}{2}(v_{n+1} w_{n+2} + v_{n+2} w_{n+1}) \quad \forall v,w \in \mathbb{R}^{n+2}.
\end{equation*}

Let $G = \SO(2,n)$ and $P$ be the stabilizer of $[e_{n+2}]$ in $G$. Then $\SB(G) = G / P$  identifies with the \emph{Einstein Universe} $\Ein^{n-1, 1}$, i.e.\ the set of isotropic lines of $(\Rf^{n,2}, \mathbf{b})$. Explicitly, the affine chart $\Affstd$ defined by $[e_{n+2}]$ is
\begin{equation*}
    \Affstd = \Ein^{n-1, 1} \smallsetminus \mathbb{P}\left(\left(\mathbb{R}e_{n+2}\right)\right)^{\perp_\b}) = \mathbb{P}\Big{\{} \sum_{i=1}^n v_i e_i + \psi\Big{(}\sum_{i=1}^n v_i e_i \Big{)} e_{n+1} + e_{n+2}  \Big{\}},
\end{equation*}
where $\psi$ is the quadratic form of signature $(n-1, 1)$ on $V:= \operatorname{Span}(e_1, \cdots, e_n)$ defined by $\psi \left(\sum_{i=1}^n v_i e_i \right) = \sum_{i=1}^{n-1} v_i^2 - v_n^2$. The identification $V \simeq \Affstd$ given by
\begin{equation*}
 \sum_{i=1}^n v_i e_i \longmapsto \mathbb{P}\Big{(}\sum_{i=1}^n v_i e_i + \psi\Big{(}\sum_{i=1}^n v_i e_i \Big{)} e_{n+1} + e_{n+2} \Big{)}
\end{equation*}
endows $\Affstd$ with the structure of a Minkowski space. Still denoting by $\psi $ the quadratic form induced on $\Affstd$ by this identification, one has:
\begin{equation*}
    \Affstd \cap \hyp_{y_0} = \{y \in \Affstd \mid \psi(y-y_0) = 0\} = \mathbf{C} (y_0),
\end{equation*}
see Figure~\ref{figure_past_and_future}. Hence in this case, the inclusion $\mathbf{C}(y_0) \subset \hyp_{y_0} \cap \Affstd$ is an equality for all~$y_0 \in \Affstd$. This phenomenon widely simplifies the proof of Theorem~\ref{thm_main} in the case where $G = \SO(n, 2)$; see \cite{chalumeau2024rigidity}.

\subsection{Embedding the projective line into $\SB(G)$}\label{sect_plong_proj_line} In this section we construct embeddings of the projective line into $\SB(G)$. The images of these embeddings are what we will define as \emph{photons} in Section~\ref{sect_photons_def}. In Lemma~\ref{lem_action_U_on_A}, we investigate the action of $U^+$ on these images.

Let $G$ be an $\HTT$ Lie group. We denote by $h_r$ the element $h_{\alpha_r}$ defined in Section~\ref{sect_real_lie_alg}. The root spaces $\g_{\pm\alpha_r}$ are one-dimensional, hence there are unique~$v^{\pm} \in \g_{\pm\alpha_r}$ such that 
\begin{equation}\label{eq_def_v}
    \tr_{\mathsf{std}} = (v^+, h_{r}, v^-)
\end{equation}
is an $\mathfrak{sl}_2$-triple, which we call \emph{standard}. The map $\plongsl_{\tr_{\mathsf{std}}}$ associated with $\tr_{\mathsf{std}}$ (see Section~\ref{sect_sl2-triples}) induces a group homomorphism $\plong: \SL_2(\mathbb{R}) \hookrightarrow G$ with kernel contained in~$\{\pm \operatorname{id}\}$ and with  differential $\plong_* = \plongsl_{\tr_{\mathsf{std}}}$ at $\operatorname{id}$.

\begin{lem}\label{lem_stab_parap}
    The stabilizer of $P$ in $\SL_2(\mathbb{R})$ is the standard Borel subgroup $P_1$ of~$\SL_2(\mathbb{R})$.
\end{lem}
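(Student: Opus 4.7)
The plan is to identify the stabilizer $H := \plong^{-1}(P)$ of $P\in G/P$ under the $\SL_2(\mathbb{R})$-action induced by $\plong$, by first computing its Lie algebra and then determining its component group.

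For the Lie algebra, I would use the definition of $\plong$ together with the description of $\operatorname{Lie}(P)$ from~\eqref{eq_lie_P}. Since $\plong_*(\E) = v^+ \in \g_{\alpha_r} \subset \uu^+ \subset \operatorname{Lie}(P)$ and $\plong_*(\He) = h_r \in \aaa \subset \mathfrak{l} \subset \operatorname{Lie}(P)$, but $\plong_*(\F) = -v^- \in \g_{-\alpha_r}$ with $\g_{-\alpha_r} \not\subset \operatorname{Lie}(P)$ (since $\alpha_r \in \Sigma_{\{\alpha_r\}}^+$, hence $\alpha_r \notin \Sigma^+ \smallsetminus \Sigma_{\{\alpha_r\}}^+$), I would conclude that $\operatorname{Lie}(H) = \plong_*^{-1}(\operatorname{Lie}(P)) = \langle \E, \He \rangle = \operatorname{Lie}(P_1)$. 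Hence the identity component $H^0$ coincides with the identity component $P_1^0$ of $P_1$.

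Any closed subgroup of $\SL_2(\mathbb{R})$ whose identity component is $P_1^0$ lies in $N_{\SL_2(\mathbb{R})}(P_1^0) = P_1$, so $H$ equals $P_1^0$ or $P_1$. To conclude $H = P_1$, I would verify that $c := \plong(-I)$ lies in $P$. The element $c$ is central in $\plong(\SL_2(\mathbb{R}))$ and satisfies $c^2 = e_G$, so $\Ad(c)$ commutes with the $\mathfrak{sl}_2$-action on $\g$ through $\ad \circ \plong_*$ and squares to the identity. By Schur's lemma, $\Ad(c)$ acts as a scalar on each irreducible $\mathfrak{sl}_2$-subrepresentation of $\g$; since $-I \in \SL_2(\mathbb{R})$ acts as $(-1)^n$ on the irreducible representation of dimension $n+1$ (whose $\ad(h_r)$-weights all have the parity of $n$), the operator $\Ad(c)$ acts as $(-1)^k$ on every $\ad(h_r)$-eigenvector of weight $k$. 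Consequently, $\Ad(c)$ preserves each $\ad(h_r)$-eigenspace of $\g$; since $\operatorname{Lie}(P)$ is a direct sum of root spaces and thus a direct sum of $\ad(h_r)$-eigenspaces, it is preserved by $\Ad(c)$. Using self-normalization of parabolic subgroups ($N_G(P) = P$), I deduce $c \in P$, so $H = P_1$.

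The main subtlety is the passage from the identity component to the full group $P_1$: the Lie algebra identification $\operatorname{Lie}(H) = \operatorname{Lie}(P_1)$ is a direct computation, but showing that the central involution $\plong(-I)$ still stabilizes $P$ requires the Schur-type argument outlined above, exploiting the $\mathfrak{sl}_2$-decomposition of $\g$ induced by $\plong$.
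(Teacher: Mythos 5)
Your proof is correct and arrives at the same two-stage structure as the paper's (Lie algebra computation gives $P_1^0 \subset H \subset P_1$, then check $\plong(-I)\in P$), but handles the key step $\plong(-I)\in P$ by a genuinely different argument. The paper computes $\Ad(\plong(-I))$ explicitly on the basis $(h_{2\varepsilon_i})_{1\le i\le r}$ of $\aaa$, using strong orthogonality of $2\varepsilon_i$ and $2\varepsilon_r$ (for $i<r$) and centrality of $-I$ (for $i=r$) to conclude that $\Ad(\plong(-I))$ fixes $\aaa$ pointwise, hence normalizes every root space, hence normalizes $\uu^+$, hence lies in $P = N_G(\uu^+)$. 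You instead decompose $\g$ into irreducible $\mathfrak{sl}_2$-submodules and use that $-I$ acts as $(-1)^n$ on the $(n+1)$-dimensional irreducible, so $\Ad(\plong(-I))$ acts as $(-1)^k$ on each $\ad(h_r)$-weight-$k$ space, then use $\ad(h_r)$-gradedness of $\operatorname{Lie}(P)$ and self-normalization $N_G(P)=P$. Your version is more conceptual and does not need the $C_r$-specific orthogonality computation; the paper's is more elementary and direct. Both work.

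One small imprecision in your writeup that is worth fixing: $\operatorname{Lie}(P)$ is \emph{not} ``a direct sum of $\ad(h_r)$-eigenspaces'' — a single $\ad(h_r)$-eigenspace can contain several root spaces, only some of which belong to $\operatorname{Lie}(P)$. What you actually need, and what you have already established in the preceding sentence, is the stronger fact that $\Ad(c)$ acts by a \emph{scalar} on each $\ad(h_r)$-eigenspace; together with the observation that $\operatorname{Lie}(P)$ is $\ad(h_r)$-graded (equivalently, $\ad(h_r)$-invariant), this gives $\Ad(c)\cdot\operatorname{Lie}(P)=\operatorname{Lie}(P)$. The weaker statement ``$\Ad(c)$ preserves each eigenspace'' alone would not suffice, since $\operatorname{Lie}(P)$ meets the eigenspaces only partially.
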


\begin{proof}[Proof] Let us denote by $S$ this stabilizer. Note that $e^{\E}, e^{\He} \in S$, so the identity component $P_1^0$ of $P_1$ is contained in $S$. Since the orbit of $P \in \SB(G)$ is nontrivial, we have $P_1^0 \subset S \subset P_1$. It remains to show that $g := \plong(- \operatorname{id})$ is in $ P$. Noticing that $\Ad(g) = \Ad\left(\plong\left(e^{\pi (\E - \F)}\right)\right) = \exp\left(\pi \ad\left(v^+ - v^-\right)\right)$,
a direct computation provides $\Ad(g) \cdot h_{2 \varepsilon_i} = h_{2 \varepsilon_i}$ for all $ \forall 1 \leq i \leq r-1$. On the other hand, one has~$\Ad(g) \cdot h_{2 \varepsilon_r} = \Ad(g) \cdot h_r = \plong_*(\Ad(-\operatorname{id})\He) = h_r =h_{2 \varepsilon_r}$. Then $\Ad(g)$ acts trivially on  the basis $(h_{2 \varepsilon_i})$ of $\aaa$, and thus on $\aaa$. In particular, the element $g$ normalizes every root space $\g_{\alpha}$ for $\alpha \in \Sigma$. Then, by~\eqref{eq_lie_u}, the element $g$ normalizes $\uu^+$. Hence $g \in P$. This proves that $S = P_1$. $\qed$
\end{proof}

By Lemma~\ref{lem_stab_parap}, the map $\plong$ induces a $\plong$-equivariant embedding $\isl:\mathbb{P}(\mathbb{R}^2) \hookrightarrow \SB(G)$. It will be convenient to write this map explicitly:
\begin{equation}\label{eq_param_photons}
    \isl ([1:t]) =  \exp(tv^-) P \quad \forall t \in \mathbb{R}.
\end{equation}

\begin{lem}\label{lem_action_U_on_A}
For any $Y \in \uu^+$, there exists $\lambda \in \mathbb{R}$ such that  for all $t \in \mathbb{R} \smallsetminus \{-\lambda^{-1}\}$ (with $-\lambda^{-1} = \infty$ if $\lambda =0$), one has:
    \begin{equation*}
        \exp(Y) \cdot \isl([1:t]) = \isl\Big{(}\Big{[}1:\frac{t}{1+ \lambda t}\Big{]}\Big{)}.
    \end{equation*}
\end{lem}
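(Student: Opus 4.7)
The plan is to reduce to two simpler cases via the root space decomposition of $\uu^+$. Writing $Y = \mu v^+ + Y'$ with $\mu \in \Rf$ and $Y' \in \bigoplus_{\beta \in \Sigma_{\{\alpha_r\}}^+ \smallsetminus \{\alpha_r\}} \g_\beta$, and using that $\uu^+$ is abelian (Section~\ref{sect_notation}), one has $\exp(Y) = \exp(\mu v^+) \exp(Y')$. My claim will be that the $\g_{\alpha_r}$-component $\mu v^+$ produces the desired Möbius-type transformation with $\lambda = \mu$, while $\exp(Y')$ acts trivially on the image of $\isl$.

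For the $\g_{\alpha_r}$-component, I use the $\plong$-equivariance of $\isl$. Since $\plong_*(\E) = v^+$, one has $\exp(\mu v^+) = \plong(\exp(\mu \E))$, so the action of $\exp(\mu v^+)$ on $\isl([1:t])$ translates into that of the upper-triangular matrix $\exp(\mu\E) \in \SL_2(\Rf)$ on $[1:t] \in \mathbb{P}(\Rf^2)$. A direct matrix calculation gives $\exp(\mu \E) \cdot [1:t] = [1 + \mu t : t] = [1 : \frac{t}{1 + \mu t}]$ for $t \ne -\mu^{-1}$, the excluded value corresponding to the point at infinity $\isl([0:1])$ of the photon.

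The main step is to show that $\exp(Y')$ fixes every $\isl([1:t])$. Decomposing $Y' = \sum_\beta Y_\beta$ and using abelianness of $\uu^+$ once more, it suffices to treat one factor $\exp(Y_\beta)$ at a time. Here I will invoke Lemma~\ref{lem_auxiliary_lemma} to obtain $\Ad(\exp(Y_\beta)) v^- = v^- + Z_\beta$ with $Z_\beta \in \operatorname{Lie}(P)$ and $[v^-, Z_\beta] = 0$. This commutation allows me to expand
\[
\exp(Y_\beta) \exp(t v^-) \exp(-Y_\beta) = \exp\bigl(t(v^- + Z_\beta)\bigr) = \exp(t v^-)\exp(t Z_\beta),
\]
so that $\exp(Y_\beta) \exp(t v^-) P = \exp(t v^-) \exp(t Z_\beta) \exp(Y_\beta) P$. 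Since $t Z_\beta$ and $Y_\beta$ both belong to $\operatorname{Lie}(P)$, the right factor lies in $P$ and thus stabilizes the base point, giving $\exp(Y_\beta) \cdot \isl([1:t]) = \isl([1:t])$. Combining both cases yields the lemma with $\lambda = \mu$. I expect no significant obstacle: the argument is a direct assembly of the $\mathfrak{sl}_2$-picture on the photon and the structural information about $\uu^+$ already provided by Lemma~\ref{lem_auxiliary_lemma}.
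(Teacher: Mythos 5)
Your proposal is correct and follows essentially the same route as the paper's proof: decompose $Y$ into its $\g_{\alpha_r}$-component $\lambda v^+$ and the remaining $\g_\beta$-components using abelianness of $\uu^+$, kill the latter via Lemma~\ref{lem_auxiliary_lemma} together with $Y_\beta, Z_\beta \in \operatorname{Lie}(P)$, and handle $\exp(\lambda v^+)$ through $\plong$-equivariance and the $\SL_2(\mathbb{R})$-action on $\mathbb{P}(\mathbb{R}^2)$.
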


\begin{proof}[Proof] We write $Y = \lambda v^+ + \sum_{\beta \in \Sigma_{\Theta}^+ \smallsetminus \{ \alpha_r \}} Y_{\beta}$ with $Y_{\beta} \in \g_{\beta}$ for all $\beta \in \Sigma_{\Theta}^+ \smallsetminus \{ \alpha_r \}$ and with $\lambda \in \mathbb{R}$. Since $\uu^+$ is abelian, one has:
\begin{equation}\label{actionYsurv}
     \exp(Y) \cdot \isl([1:t]) = \exp(\lambda v^+)\Bigg{(}\prod_{\beta \in \Sigma_{\alpha_r}^+ \smallsetminus \{\alpha_r\} }\exp(Y_{\beta}) \Bigg{)} \cdot \isl([1:t]).
\end{equation}
Let $\beta \in  \Sigma_{\alpha_r}^+ \smallsetminus \{\alpha_r\}$. Since $Y_{\beta} \in \operatorname{Lie} (P)$, one has 
\begin{align*}
    \exp(Y_{\beta}) \cdot \isl([1:t]) = \exp(Y_{\beta}) \exp(t v^-) P &= \exp(Y_{\beta}) \exp(t v^-) \exp(-Y_{\beta}) P \\
    &= \exp\left(t \Ad \left(\exp\left(Y_{\beta}\right) \right) v^-\right) P.
\end{align*} 
By Lemma~\ref{lem_auxiliary_lemma}, there exists $Z \in \operatorname{Lie}(P)$ such that $[v^-, Z] = 0$. Then 
\begin{align*}
    \exp\left(t \Ad \left(\exp\left(Y_{\beta}\right) \right) v^- \right) P &= \exp(t \Ad \left(\exp\left(Y_{\beta}\right)  \right) v^-) P \\
    &= \exp(t(v^- + Z)) P 
    = \exp(tv^-)\exp(t Z) P 
    = \exp(t v^-) P,
\end{align*}
the second last equality holding because $v^-$ and $Z$ commute, and the last one holding because $Z \in  \operatorname{Lie}(P)$. Hence by induction, Equation~\eqref{actionYsurv} becomes:
\begin{align*}
   \exp(Y)\cdot \isl([1:t]) &= 
   \exp(\lambda v^+) \cdot \isl([1:t]) = \plong(e^{\lambda \E}) \cdot \isl([1:t]) \\
   &=  \isl(e^{\lambda \E} \cdot [1:t]) 
   = \isl\Big{(}\Big{[}1:  \frac{t}{1+\lambda t }\Big{]}\Big{)},
\end{align*}
the last equality holding by an elementary computation and the second last by $\plong$-equivariance of $\isl$. $\qed$
\end{proof}

\subsection{Proximal representations of $\HTT$ Lie groups}\label{sect_irred_htt} We denote by $\omega_r$ the fundamental weight $\omega_{\alpha_r}$ associated with the root $\alpha_{r}$, with the notation of Section~\ref{sect_real_lie_alg}. There always exists $N \in \mathbb{N}^*$ such that $G$ admits a finite-dimensional real irreducible linear representation of highest weight $N \omega_r$ (see e.g.\ \cite[Lem.\ 3.2]{gueritaud2017anosov}). Such a representation is automatically $\{\alpha_r\}$-proximal. We will use the following classical lemma throughout this paper:

\begin{lem}\label{lem_degree_rep} Let $G$ be an $\HTT$ Lie group and let $(V, \rho)$ be a finite-dimensional real irreducible linear representation of $G$ with highest weight $\chi = N \omega_r$ for some $N \in \mathbb{N}_{>0}$. Let $e_1 \in V^{\chi} \smallsetminus \{ 0 \}$. Then $\rho_*(v^-)^k e_1 \ne 0$ for all $k \leq N$, and $\rho_*(v^-)^k e_1 = 0$ for all~$k \geq N+1$.
\end{lem}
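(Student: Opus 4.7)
The plan is to reduce the statement to standard $\mathfrak{sl}_2$ representation theory, applied to the $\mathfrak{sl}_2$-triple $\tr_{\mathsf{std}} = (v^+, h_r, v^-)$ from~\eqref{eq_def_v}. The main point is that $e_1$ is automatically a highest weight vector for this $\mathfrak{sl}_2$-action, with an explicitly computable $h_r$-eigenvalue.

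First I would compute the $h_r$-eigenvalue of $e_1$. Since $e_1 \in V^{\chi}$ with $\chi = N \omega_r$, and since the families $(h_\alpha)_{\alpha \in \FS}$ and $(\omega_\alpha)_{\alpha \in \FS}$ are dual bases of $\aaa$ and $\aaa^*$ (see Section~\ref{sect_real_lie_alg}), one has $\omega_r(h_r) = 1$ and therefore
\[
    \rho_*(h_r) \cdot e_1 = \chi(h_r) e_1 = N \omega_r(h_r) e_1 = N e_1.
\]
Next I would check that $\rho_*(v^+) e_1 = 0$. Since $v^+ \in \g_{\alpha_r} \subset \uu^+$, the vector $\rho_*(v^+) e_1$ lies in $V^{\chi + \alpha_r}$; because $\chi$ is the maximal weight for the ordering of Section~\ref{sect_irred_semi}, $\chi + \alpha_r$ is not a weight of $V$, so this weight space is trivial.

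Now I would restrict $\rho$ to the subalgebra $\plong_*(\mathfrak{sl}_2(\mathbb{R})) = \operatorname{Span}(v^+, h_r, v^-)$ and use the structure theory of finite-dimensional $\mathfrak{sl}_2$-modules. The vector $e_1$ satisfies $\rho_*(v^+) e_1 = 0$ and $\rho_*(h_r) e_1 = N e_1$ with $N \in \mathbb{N}_{>0}$, so it is a highest weight vector for this $\mathfrak{sl}_2$-action with nonnegative integer weight $N$. By the well-known classification of finite-dimensional irreducible $\mathfrak{sl}_2(\mathbb{R})$-modules, the submodule of $V$ generated by $e_1$ is isomorphic to the irreducible $(N+1)$-dimensional representation, with the basis $\{ \rho_*(v^-)^k e_1 \}_{0 \leq k \leq N}$. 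In particular, $\rho_*(v^-)^k e_1 \ne 0$ for $k \leq N$, while $\rho_*(v^-)^{N+1} e_1 = 0$ (and therefore $\rho_*(v^-)^k e_1 = 0$ for all $k \geq N+1$), which is exactly the statement.

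There is essentially no obstacle here: the argument is a direct application of $\mathfrak{sl}_2$-representation theory, the only subtlety being the two preliminary computations of the $h_r$-eigenvalue and of the vanishing under $v^+$, both of which follow immediately from the definitions recalled in Sections~\ref{sect_real_lie_alg} and~\ref{sect_irred_semi}.
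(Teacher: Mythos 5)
Your proof is correct, but it takes a genuinely different route from the paper's. You reduce the lemma to $\mathfrak{sl}_2$-representation theory for the triple $\tr_{\mathsf{std}} = (v^+, h_r, v^-)$: you compute $\rho_*(h_r)e_1 = Ne_1$ from the duality of $(h_\alpha)$ and $(\omega_\alpha)$, observe $\rho_*(v^+)e_1 = 0$ since $\chi + \alpha_r$ exceeds the highest weight, and then apply the standard fact that the cyclic $\mathfrak{sl}_2$-submodule generated by a primitive vector of integral weight $N$ inside any finite-dimensional module is the irreducible $(N{+}1)$-dimensional one. The paper instead argues directly via the weight geometry of the $C_r$ root system: it notes that the restricted Weyl group acts by signed permutations of the $\varepsilon_i$, so the weights of $V$ lie in the box $C = \{N\sum \delta_i \varepsilon_i : \delta_i \in \{\pm 1\}\}$ intersected with the root-lattice translate, and then checks explicitly that $N\omega_r - k\alpha_r \notin C$ for $k \geq N{+}1$. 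Your approach is arguably more elementary and more robust: it uses nothing about the $C_r$ structure beyond what was already fixed in Section~\ref{sect_real_lie_alg}, and would work verbatim for any simple root $\alpha$ and highest weight $N\omega_\alpha$ in any semisimple $G$. The paper's argument is more ad hoc to type $C_r$ but has the virtue of implicitly recording the full weight diagram, which aligns with the Weyl-group viewpoint used elsewhere in the paper. One small caution worth spelling out if you wrote this up: the $\mathfrak{sl}_2$-module $V|_{\plong_*(\mathfrak{sl}_2)}$ need not be irreducible, so you must invoke the cyclic-submodule fact (or the explicit commutator computation $\rho_*(v^+)\rho_*(v^-)^k e_1 = k(N{-}k{+}1)\rho_*(v^-)^{k-1}e_1$) rather than the classification of irreducibles alone; you gesture at this correctly, but it is the one point where a careless reader might think irreducibility of $V$ as an $\mathfrak{sl}_2$-module is being used.
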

\begin{proof}[Proof] By the definition of $\omega_r$ one has $0 \ne \rho_*(v^-)^k e_1 \in V^{N\omega_r - k\alpha_r}$ for all $0 \leq k \leq N$ (see e.g.\ \cite[Lem.\ 3.2.9]{goodman2009symmetry}).

Since $\FS$ is of type $C_r$, the Weyl group $W$ acts by signed permutations of the $(\varepsilon_i)_{1 \leq i \leq r}$ (see e.g.\ \cite{knapp1996lie}) and $\omega_r = \sum_{i=1}^r \varepsilon_i$. Hence the convex envelope of the orbit $W \cdot (N\omega_r)$ of $N\omega_r$ by $W$ in $\mathfrak{a}$ is $C := \{N\sum_{i=1}^r \delta_i \varepsilon_i \ \vert \ \delta_i \in \{\pm 1\} \}$.

The set of weights of $(V, \rho)$ is the intersection of $C$ with the translate $(N\omega_r) + \sum_{i=1}^r \mathbb{Z} \alpha_i$ of the root lattice (see e.g.\ \cite[Prop.\ 3.2.10]{goodman2009symmetry}). For $k \geq N+1$, one has 
$$N\omega_r- k \alpha_r  = N\sum_{i=1}^{r-1} \varepsilon_i - (2k - N) \varepsilon_r \notin C$$ 
Hence $\rho_*(v^-)^k e_1 \in V^{2\omega_r - k\alpha_r} = \{0\}$. $\qed$
\end{proof}

\section{Proper domains and their automorphism group}\label{sect_proper_autom}

In this section we recall some definitions and properties of domains in a flag manifold. In particular, we introduce the \emph{diamonds} (see Definition~\ref{def_diamant}) in $\SB(G)$, and recall their basic properties. These proper domains are almost-homogeneous, and Theorem~\ref{thm_main} states that they are the only proper domains in $\SB(G)$ with this property. 

\subsection{Generalities on proper domains} The notions recalled in this section are generalizations of classical notions of convex projective geometry, and most of them were introduced in \cite{zimmer2018proper}.

Let $G$ be a noncompact simple Lie group and $\Theta \subset \FS$ a subset of the simple roots.

\begin{definition}\label{def_domain_proper}
    Let $\O \subset \Fl_{\Theta}$ be an open subset. We say that $\O$ is:
    \begin{enumerate}
        \item a \emph{domain} if $\O$ is nonempty and connected;
        \item \emph{proper} if there exists $\xi \in G/P^-_{\Theta}$ such that $\overline{\O}\cap \hyp_{\xi}  
 = \emptyset$. In particular, if $\overline{\O}\cap \hyp_{P_{\Theta}^-}  
 = \emptyset$ then we will say that $\O$ is \emph{proper in $\Affstd$}. This is equivalent to saying that $\overline{\O}\subset \Affstd$.
    \end{enumerate}    
\end{definition}

\begin{rmk}\label{Rmk_contenu_dans_A_std}
    Given a proper domain $\O$ of $\Fl_{\Theta}$, we will always be able to assume that~$\O$ is proper in $\Affstd$. Indeed, since $G$ acts transitively on $\Fl_{\Theta}^-$, there exists $g \in G$ such that $\overline{g \cdot \O} \subset \Affstd$, and the properties we will investigate on $\O$ will be invariant under the action of $G$ on $\Fl_{\Theta}$. In this case, it will be possible to see $\O$ as a bounded domain of the affine space $\Affstd$.
\end{rmk}

\subsubsection{The automorphism group.} \label{sect_group_aut_omega} Given an open subset $\O \subset \Fl_{\Theta}$, we define  its \emph{automorphism group}
\begin{equation*}
    \Aut(\O) = \left\{ g \in G \mid g \cdot \O = \O \right\}.
\end{equation*}
By \cite{zimmer2018proper}, the group $\Aut(\O)$ is a Lie subgroup of $G$. Moreover, it acts properly discontinuously on $\O$ as soon as $\O$ is a proper domain.
\begin{rmk}
    In the case where $G$ is an $\HTT$ Lie group and $\Fl_{\Theta} = \SB(G)$, the group~$\Aut(\O)$ is commensurable to the \emph{conformal group} of $\O$, that is, the group of all the invertible maps from $\O$ to itself that preserve the causal structure of $\O$ \cite[Thm 2.3]{kaneyuki2011automorphism}.
\end{rmk}

The domain $\O$ is said to be \emph{almost-homogeneous} is there exists a compact subset $\mathcal{K} \subset \O$ such that $\O = \Aut(\O) \cdot \mathcal{K}$.

The \emph{full orbital limit set} of $\O$ is the set
\begin{equation*}
    \Lambda_{\O}^{\operatorname{orb}} := \bigcup_{x \in \O} (\overline{\Aut(\O) \cdot x}) \smallsetminus (\Aut(\O) \cdot x),
\end{equation*}
see~\cite{DGKproj}. Since $\Aut(\O)$ acts properly on $\O$, we have $\Lambda_{\O}^{\operatorname{orb}} \subset \partial \O$. A domain $\O$ is said to be \emph{almost-homogeneous} if $\Lambda_{\O}^{\operatorname{orb}} = \partial \O$. This is equivalent to saying that for all $p \in \partial \O$, there exist $x \in \O$ and $(g_n) \in \Aut(\O)^{\mathbb{N}}$ such that $g_n \cdot x \rightarrow p$, and for proper domains it is \emph{a priori} weaker than quasi-homogeneity. Note that if $G$ is an $\HTT$ Lie group, then the notions of quasi-homogeneity and almost-homogeneity only depends on the Lie algebra $\g$ of $G$, because $\Ad(G)$ has finite index in $\Aut(\g)$ (see e.g.\ \cite{satake2014algebraic}). The domain $\O$ is said to be \emph{symmetric} if for any $x \in \O$ there exists an order-two element $s_x \in \Aut_1(\g)$ such that $s_x \cdot \O = \O$ and $x $ is the only fixed point of $s_x$ in $\O$. This notion only depends on the Lie algebra $\g$ of $G$ (recall that, if $G$ is an $\HTT$ Lie group, then one has $\Aut_1(\g) = \Aut(\g)$ --- see Section~\ref{sect_root_syst_shilov}).

Note that $\Aut(g \cdot \O) = g \Aut(\O) g^{-1}$ for all $g \in G$; therefore the property of being almost-homogeneous, resp.\ symmetric, is invariant under the action of $G$ on $\Fl_{\Theta}$. In our case, i.e.\ when $G$ is an $\HTT$ Lie group, by Remark~\ref{Rmk_contenu_dans_A_std} and Section~\ref{section_structure_shilov}, it will always be possible to assume that $\O$ is proper in $\Affstd$, and given a point $x \in \overline{\O}$, we can always assume that $x = P$, up to translating $\O$ by an element of $G$.

We will say that a metric $d$ on $\O$ is \emph{$\Aut(\O)$-invariant} if $d(g \cdot x, g \cdot y) = d(x,y)$ for all~$x,y \in \O$ and $g \in \Aut(\O)$.

We will make use of the following lemma:

\begin{lem}\label{lem_inclusion_equality_almost_homogeneous}
    Let $\O, \O'$ be two proper domains of $\Fl_{\Theta}$ such that $\O \subset \O'$. Assume that $\Aut(\O) \subset \Aut(\O')$ and that $\O$ is almost-homogeneous. Then $\O = \O'$.
\end{lem}

\begin{proof}[Proof]
    
Let $p \in \partial \O$. There exist $x \in \O$ and $(g_n) \in \Aut(\O)^{\mathbb{N}}$ such that $g_n \cdot x \rightarrow p$. Thus $(g_n)$ is unbounded in $G$. Since $\Aut(\O) \subset \Aut(\O')$, the group $\Aut(\O)$ acts properly on the proper domain $\O'$. Thus $p \in \partial \O'$. 

We have proved that $\O$ is closed in $\O'$. Since it is also open, and $\O'$ is connected, we have $\O = \O'$. $\qed$
\end{proof}

We can now prove Lemma~\ref{thm_zim_semisimple}. The proof is similar to that of \cite{zimmer2018proper}. We give it for convenience:

\begin{proof}[Proof of Lemma~\ref{thm_zim_semisimple}] Let us denote by $\O_i \subset \SB(G)$ the image of $\O$ by the canonical projection $\Pi_i: \SB(G) \rightarrow \SB(G_i)$ for $1 \leq i \leq k$, and let $\O' := \O_1 \times \cdots \times \O_k$. Then $\O'
$ is a proper $\Aut(\O)$-invariant domain of $\SB(G)$ containing $\O$. Then by Lemma~\ref{lem_inclusion_equality_almost_homogeneous}, we have $\O = \O'$.

Moreover, note that for all $ 1 \leq i \leq k$, the domain $\O_i$ is almost-homogeneous, since $\partial \O_i \subset \Pi_i(\partial \O)$. $\qed$
\end{proof}

\subsubsection{The dual. }\label{sect_dual_proper_domain} Let $\O \subset \Fl_{\Theta}$ be a subset. The \emph{dual of $\O$} is the set
\begin{equation*}
    \O^*:= \{\xi \in G/P_{\Theta}^- \mid \hyp_{\xi} \cap \O = \emptyset\} \subset \Fl_{\Theta}^\opp.
\end{equation*}
Let us recall some properties of this set (see \cite{zimmer2018proper}):
\begin{enumerate}
    \item The set $\O^*$ is $\Aut(\O)$-invariant, i.e.\ one has $\Aut(\O) \subset \Aut(\O^*)$.
    \item If $\O$ is open, then $\O^*$ is compact. 
    \item The domain $\O$ is proper if, and only if, its dual $\O^*$ has nonempty interior.
\end{enumerate} 

The domain $\O$ is \emph{dually convex}, if for any $p \in \partial\O$ there exists $\xi \in \O^*$ such that~$p \in \hyp_{\xi}$.
 
In \cite[Cor.\ 9.3]{zimmer2018proper}, A. Zimmer proves that any proper almost-homogeneous domain of $\Fl_{\Theta}$ is dually convex. Based on his proof, we give in Section~\ref{sect_def_cara} a slightly stronger result (see Proposition~\ref{prop_zimmer_dual_convex}).

\begin{rmk}\label{rmk_bidual_dual_convex}
\begin{enumerate}
    \item Dual convexity is a generalization to arbitrary flag manifolds of the classical notion of convexity in the projective space, using the dual characterization of convexity. Proposition~\ref{prop_zimmer_dual_convex} is then analogous to a classical result of Kobayashi \cite{kobayashi1984projectively}.
    \item If $\O \subset \Fl_{\Theta}$ is a proper domain, then its bidual $\O^{**}$ is always a proper dually convex domain of $\Fl_{\Theta}$ by definition.
\end{enumerate}
\end{rmk}

\subsection{Diamonds }\label{ssect_def_diamonds} 

In this section, let $G$ be an $\HTT$ Lie group. Given two transverse points $p,q \in \SB(G)$, the set $\SB(G) \smallsetminus (\hyp_p \cup \hyp_q)$ admits several connected components, exactly two of which are proper.
\begin{definition}\label{def_diamant} A subset $\O$ of $\SB(G)$ is called a \emph{diamond} if there exist a (unique) pair of transverse points $p,q \in \SB(G)$ such that $\O$ is one of the two proper connected components of $\SB(G) \smallsetminus (\hyp_p \cup \hyp_q)$. The two points $p,q$ are then called the \emph{endpoints of $\O$}.
\end{definition}

Let $\Diams_{\mathsf{std}} := \I^+(P)$. Recall the order-two element $k_0\in G$ such that $k_0 P k_0^{-1} = P^-$ (see Section~\ref{sect_flag_mfds}). Then $\Diams_{\mathsf{std}}' :=  \mathbf{I}^-(P) = k_0 \cdot \Diams_{\mathsf{std}}$ is the interior of the dual of $\Diams_{\mathsf{std}}$ (see e.g.\ \cite[Lem.\ 13.11]{guichard2022generalizing}), and the domains $\Diams_{\mathsf{std}}$ and $\Diams_{\mathsf{std}}'$ are exactly the two diamonds with endpoints $P$ and $P^{\opp}$. They are proper in $\SB(G)$ --- although they are not proper in $\Affstd$. 

Given two transverse points $p,q \in \SB(G)$, one has $(p,q)=g \cdot (P, P^{\opp})$ for some~$g \in G$. The two diamonds $g \cdot \Diams_{\mathsf{std}}$ and $g \cdot \Diams_{\mathsf{std}}'$ are the diamonds with endpoints $p$ and $q$. 

By the two previous paragraphs, any diamond is a $G$-translate of $\Diams_{\mathsf{std}}$. In particular, up to the action of $G$ on $\SB(G)$, there is only one model of diamond in $\SB(G)$. It is convenient to consider models of diamonds that are proper in $\mathbb{A}$:

\begin{definition}
    If $p,q \in \Affstd$ and $q \in \I^+ (p)$, we define $\Diams(p,q)$ as the set~$\I^+(p) \cap \I^-(q)$. It is one of the two diamonds with endpoints $p$ and $q$.
\end{definition}
For $p,q \in \Affstd$ and $q \in \I^+ (p)$, the diamond $\Diams(p,q)$ is the only one of the two diamonds with endpoints $p$ and $q$ that is proper in $\Affstd$; see Figure \ref{fig:enter-label}.

The following fact is well-known (see e.g.\ \cite[Prop.\ 3.7, 5.2 and Remark 5.4]{guichard2018positivity} and \cite[Thm 2.3 and 3.5]{kaneyuki2011automorphism}): 
\begin{fact}\label{fact_autom_diam}
The automorphism group of $\Diams_{\mathsf{std}}$ in $\Aut(\g)$ contains a symmetry. The diamond $\Diams_{\mathsf{std}}$ is thus a symmetric domain of $\SB(G)$.

Moreover, the action of the identity component $L^0$ of $L$ on $\Diams_{\mathsf{std}}$ is transitive and the stabilizer of a point in $\Diams_{\mathsf{std}}$ is a maximal compact subgroup of $L^0$, so that any diamond is a model for the symmetric space of $L^0$. 
\end{fact}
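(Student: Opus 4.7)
The plan is to identify $\Diams_{\mathsf{std}}$ with the open convex cone $c^0 \subset \uu^-$ via the affine chart $\varphi_{\mathsf{std}}$, and then to exploit the Euclidean Jordan algebra structure that $\uu^-$ carries in the tube-type setting. First, since $\Diams_{\mathsf{std}} = \exp(c^0)\cdot P = \varphi_{\mathsf{std}}(c^0)$ and every $\ell \in L^0 \subset \Stab_G(P)$ satisfies
$$\ell \cdot \varphi_{\mathsf{std}}(X) \;=\; \ell \exp(X)\ell^{-1} \cdot \ell P \;=\; \varphi_{\mathsf{std}}(\Ad(\ell) X),$$
the three assertions reduce to: (a) $L^0$ acts transitively on $c^0$; (b) the $L^0$-stabilizer of any point of $c^0$ is a maximal compact subgroup of $L^0$; (c) there is an involution in $\Aut(\g)$ that preserves $c^0$ and fixes a point of it.

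For (a) and (b), the decisive input is that $\uu^-$ carries a canonical Euclidean Jordan algebra structure for which $c^0$ is the symmetric cone of squares of invertible elements. This can be verified directly from Table~\ref{table_shilov_bnds} --- e.g.\ for $\g = \spp(2r,\Rf)$ one has $\uu^- \cong \mathrm{Sym}(r,\Rf)$ with $c^0$ the cone of positive definite matrices, on which $L^0 \cong \mathrm{GL}^+(r,\Rf)$ acts transitively by $X \mapsto g X\,{}^t g$ with isotropy $\mathrm{SO}(r)$, and similarly for the other rows of the table --- or obtained uniformly via the Koecher--Vinberg correspondence (see \cite{faraut1994analysis}): the identity component of the structure group of a Euclidean Jordan algebra acts transitively on its symmetric cone with isotropy the compact automorphism group of the Jordan algebra, which is maximal compact in the structure group; in the $\HTT$ setting this structure group is precisely $L^0$. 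Note that claim~(a) is not automatic from the description of $c^0$ as the interior of the convex hull of $\Ad(L^0)\cdot v^-$, since in general a convex hull is larger than its spanning orbit; transitivity is really a tube-type phenomenon that requires the Jordan algebra structure.

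For (c), the natural candidate is the Jordan inversion $X \mapsto X^{-1}$ on $c^0$, an involutive diffeomorphism fixing the Jordan identity $e$. To realize it inside $\Aut(\g)$, I would invoke the Cayley transform, which identifies the bounded realization of $\mathbb{X}_G$ with the tube domain $c^0 + i\,\uu^-$ in the complexification of $\uu^-$, and conjugates the biholomorphic $G$-action to an action whose restriction to the real Shilov boundary $\SB(G) \supset \Diams_{\mathsf{std}}$ contains both $L^0$ and the Jordan inversion; the geodesic symmetry of $\mathbb{X}_G$ at the point corresponding to $ie$ then provides the required involution fixing $\varphi_{\mathsf{std}}(e) \in \Diams_{\mathsf{std}}$. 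The main obstacle is precisely this last step: showing that the Jordan inversion lifts from a diffeomorphism of $c^0$ to an element of $\Aut(\g)$ --- a tube-type-specific phenomenon ultimately traceable to the presence in $G$ of the long-root $\mathfrak{sl}_2$-triple $(v^+, h_r, v^-)$ of Section~\ref{sect_plong_proj_line}, whose associated $\SL_2$-subgroup provides the missing ``inversion'' element beyond the structure group of the Jordan algebra. Once this is established, uniqueness of the fixed point of the involution on $\Diams_{\mathsf{std}}$ follows from the Riemannian symmetric space structure on $L^0/K_{L^0}$ obtained from (a) and (b).
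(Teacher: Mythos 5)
The paper does not prove this Fact: it cites \cite[Prop.~3.7, 5.2, Rmk~5.4]{guichard2018positivity} and \cite[Thm~2.3, 3.5]{kaneyuki2011automorphism}. Your Jordan-algebra argument is therefore a different (and more self-contained) route. The reduction of everything to the cone $c^0$ via $\varphi_{\mathsf{std}}$, and the transitivity-plus-isotropy claims~(a) and~(b) via the Koecher--Vinberg correspondence, are correct in substance (modulo the routine check that $\Ad\colon L^0\to\GL(\uu^-)$ has image the identity component of $G(c^0)$ and finite kernel). Your parenthetical caution that transitivity does not follow formally from the description of $c^0$ as a convex hull of an $L^0$-orbit is also well taken.

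Claim~(c), however, is not correctly carried out. The geodesic symmetry of $\mathbb{X}_G$ at the base point $ie$ of the tube model is $z \mapsto -z^{-1}$ (Jordan inverse in $V_{\mathbb{C}}$). Its restriction to the Shilov boundary $\uu^- \subset \SB(G)$ is $x \mapsto -x^{-1}$, which sends $e \mapsto -e$, has \emph{no} fixed point on $\SB(G)$ (in a Euclidean Jordan algebra $x^2 = -e$ has no real solution), and sends $c^0$ to $-c^0$, i.e.\ it swaps $\Diams_{\mathsf{std}} = \I^+(P)$ with $\Diams_{\mathsf{std}}' = \I^-(P)$ rather than preserving $\Diams_{\mathsf{std}}$. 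The involution you want is the plain Jordan inversion $x\mapsto x^{-1}$ (equivalently, the geodesic symmetry of the \emph{compact} symmetric space $\SB(G)$ at $\varphi_{\mathsf{std}}(e)$, not of the noncompact $\mathbb{X}_G$). One clean way to see that it lies in $\Aut(\g)$ and preserves $P$'s affine chart is to write it as $\theta\circ\sigma_{ie}$, where $\theta\in\Aut(\g)$ is the automorphism acting as $-\mathrm{id}$ on $\uu^{\pm}$ and as $\mathrm{id}$ on $\mathfrak{l}$ (this is well-defined because $\g$ is $3$-graded by $\ad(H_0)$, and it lies in $\Aut_1(\g)=\Aut(\g)$ since it acts trivially on $\aaa$ and preserves each restricted root space). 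With that correction, the rest of your step~(c), including the uniqueness of the fixed point via the $L^0/K_{L^0}$ symmetric-space structure furnished by~(a),(b), goes through.
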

Since $L$ has one-dimensional center, there exist finite-index subgroups $H_1 \leq \Aut(\Diams_{\mathsf{std}})$ and $H_2 \leq \Iso(\Rf \times X_{L_s})$ such that the diamond $\Diams_{\mathsf{std}}$ is $(H_1, H_2)$-equivariantly diffeomorphic to $\Rf \times X_{L_s}$, where $X_{L_s}$ is the symmetric space of $L_s$ (see e.g.\ \cite[Table I]{kaneyuki2006causal}). These identifications are listed in Table \ref{table_shilov_bnds_2}.

\begin{rmk}\label{rmk_diamonds_basis_of_neighborhoods}
\begin{enumerate}
    \item  The family of diamonds $\Diams(p,q)$ with $p,q \in \Affstd$ and~$q \in \mathbf{I}^+(p)$ forms a basis of neighborhoods of $\Affstd$.
    \item Since $L_s$ admits cocompact lattices \cite{borel1963compact}, any diamond is \emph{divisible}, in the sense that there exists a discrete subgroup $\Gamma \leq \Aut(\O)$ and a compact subset~$\mathcal{K} \subset \O$ such that $\O = \Gamma \cdot \mathcal{K}$.
    \item  Diamonds are defined more generally in any flag manifold $\Fl_{\Theta}$ admitting a $\Theta$-positive structure \cite{labourie2021positivity}. By Zimmer's theorem (Fact~\ref{th_zim_G/P}) and Fact~\ref{fact_autom_diam}, these diamonds are almost-homogeneous (resp.\ divisible) if, and only if, the flag manifold $\Fl_{\Theta}$ is the Shilov boundary associated with an $\HTT$ Lie group.
\end{enumerate}
\end{rmk}

\begin{ex}[Explicit construction of diamonds] \label{ex_eplicit_diamonds} Let us see what diamonds look like, for different values of $G$.

 (1) \textbf{$G = \SO(n,2)$ with $n \geq 3$.} Given two transverse points $p,q \in \Ein^{n-1, 1}$, the set $\SB(G) \smallsetminus (\hyp_p \cup \hyp_q)$ has exactly three connected components (see Figure \ref{figure_past_and_future} for $p = P$ and~$q = P^\opp$). For a general $\HTT$ Lie group $G$, the set $\SB(G) \smallsetminus (\hyp_p \cup \hyp_q)$, where $p,q $ are two transverse points, may have more connected components. If~$\dim(\SB(G)) \geq 3$, then there are exactly $(r+1)$ connected components, where $r$ is the real rank of $G$ (see e.g.\ \cite{kaneyuki1998sylvester}). This is related to the fact that, in general, the inclusion $\mathbf{C}(P) \subset \hyp_P \cap \Affstd$ is strict (see Section~\ref{sect_lag}), and is an obstruction to a direct generalization to any $\HTT$ Lie group of the proof of Theorem~\ref{thm_main} for $G = \SO(n,2)$ ($n \geq 2$) established in \cite{chalumeau2024rigidity}.

(2) \textbf{$G = \Sp(2r, \mathbb{R})$.} Let us take the notation of Section~\ref{ex_shilov_bndrs}. Then
\begin{equation*}
    \Diams_{\mathsf{std}} = \Big{\{} \operatorname{Im}\begin{pmatrix}
        I_r \\ X
    \end{pmatrix} \mid X \in \operatorname{Sym}_r^{++}(\Rf) \Big{\}}  ; \ \Diams_{\mathsf{std}}'  = \Big{\{}\operatorname{Im}\begin{pmatrix}
        I_r \\ -X
    \end{pmatrix} \mid X \in \operatorname{Sym}_r^{++}(\Rf) \Big{\}},
\end{equation*}
where $\operatorname{Sym}_r^{++}(\Rf)$ is the set of positive definite symmetric matrices of size $r~\times~r$. In particular, the diamond $\Diams_{\mathsf{std}}$ is a model for the symmetric space $\Rf \times \left(\SL(r, \Rf)/\SO(r)\right)$. The analogue diamonds for other Shilov boundaries associated with $\HTT$ Lie groups are given in Table \ref{table_shilov_bnds_2} (see also Table 1 of \cite{guichard2018positivity}).

\begin{figure}[H]
\begin{center}
\begin{minipage}[b]{.5\linewidth}
\begin{center}
\begin{tikzpicture}
            \fill[fill=black] (0,0) circle (1pt);
            \draw node at (0.4,0) {$P$};
             \draw (-1.7,-1.7) -- (0,0) -- (-1.7,1.7);
             \draw (1.7,-1.7) -- (0,0) -- (1.7,1.7);   \draw (-1,1) arc (180:360:1 and 0.15);
          \draw[dashed] (1,1) arc (0:180:1 and 0.15); 
           \draw (-1,-1) arc (180:360:1 and 0.15);
          \draw[dashed] (1,-1) arc (0:180:1 and 0.15);
          \draw node at (0,1.6) {$\mathbf{I}^+(P) = \Diams_{\mathsf{std}}$};
          \draw node at (0,-1.6) {$\mathbf{I}^-(P) = \Diams_{\mathsf{std}}'$};
          
        \end{tikzpicture}
\caption{Past and future of $P$ in $\Affstd$ in the case where $\g = \soo(3,2)$}
    \label{figure_past_and_future}
\end{center}
\end{minipage} \hfill
\begin{minipage}[b]{.47\linewidth}
\begin{center}
\begin{tikzpicture}
          \draw (-2,1) -- (0,-1) -- (2,1);
          
          \draw (-2, -1) -- (0,1) -- (2,-1);
          \draw (-1,0) arc (180:360:1 and 0.15);
           \draw [fill, pattern = north east lines, opacity=0.7] (-1,0) -- (0,1) -- (1,0) -- (0,-1);
          
          \draw[dashed] (1,0) arc (0:180:1 and 0.15);
          \fill[fill=black] (0,1) circle (1pt);
          \fill[fill=black] (0,-1) circle (1pt);
          \draw node at (0.15,1.15) {$q$};
          \draw node at (0.15,-1.15) {$p$};
        \end{tikzpicture}
\caption{The diamond $\Diams(p,q)$ for $p,q \in \Affstd$ and $q \in \mathbf{I}^+(p)$ (greyed-out area), seen in $\Affstd \simeq \mathbb{R}^{2,1}$ for $\g = \soo(3,2)$.}
    \label{fig:enter-label}
\end{center}
\end{minipage}
\end{center}
\end{figure}

    \begin{table}[H]
        \centering
        \begin{tabular}{|c|c|}
        \hline
         $G$ &  $\Diams_{\mathsf{std}} \simeq $\\ 
         \hline
           $\SO(n,2)$    & $\mathbb{R} \times \mathbb{H}^{n}$  \\
           \hline
           $\Sp(2r , \mathbb{R})$ &  $\mathbb{R} \times (\SL(r, \mathbb{R})/ \SO(r))$  \\
           \hline
           $\SU(r,r)$ & $\mathbb{R} \times (\SL(r, \mathbb{C})/ \operatorname{SU}(r))$ \\
           \hline
            $\SO^*(4r)$ & $\mathbb{R} \times (\SL(r, \mathbb{H})/ \operatorname{Sp}(r))$ \\
           \hline
            $E_{7(-25)}$ & $\mathbb{R} \times (E_{6(-26)}/ F_4)$ \\
           \hline
        \end{tabular}
        \caption{The diamonds in $\SB(G)$ for every $\HTT$ Lie group (up to local isomorphism).} \label{table_shilov_bnds_2}
    \end{table}
    
\end{ex}

\section{Photons}\label{sect_photons_def}

In this section, we introduce the notion of \emph{photon} in the Shilov boundary $\SB(G)$ associated with an $\HTT$ Lie group $G$. The name refers to the well-known photons of the Einstein Universe $\Ein^{n-1, 1}$. We will use these generalized photons to define the Kobayashi pseudo-metric of a domain of $\SB(G)$ in Section~\ref{sect_dist_kob}.

\subsection{Definition and basic properties} Let $G$ be an $\HTT$ Lie group. Recall the element $h_r \in \aaa$ defined in Section~\ref{sect_plong_proj_line}. An $\mathfrak{sl}_2$-triple $\tr = (e,h,f)$ in $\g$ is said to be \emph{photon-generating} if $h$ is conjugate to $h_r$ in $G$, i.e.\ if there exists $g \in G$ such that~$h = \Ad(g) h_r$. The embedding $\plongsl_\tr$ defined in Section~\ref{sect_sl2-triples} induces an action of~$\SL_2(\mathbb{R})$ on $\SB(G)$, referred to as a \emph{photon-generating action}. This action has a unique closed orbit, which is a topological circle (whose elements are all the attracting fixed points of proximal elements of $\SL_2(\mathbb{R})$ for this action). 

\begin{definition}
    A \emph{photon} is the unique closed orbit of a photon-generating action of~$\SL_2(\mathbb{R})$ on $\SB(G)$. We denote by $\mathcal{L}$ the set of all photons of $\SB(G)$.
\end{definition}

We denote by $\Phot_{\mathsf{std}}$ the \emph{standard} photon defined by the standard $\mathfrak{sl}_2$-triple $\tr_{\mathsf{std}}$ of~\eqref{eq_def_v}. The map $\isl$ of~\eqref{eq_param_photons} is then a parametrization of $\Phot_{\mathsf{std}}$. We will mostly use the following equality in the rest of this paper:
\begin{equation*}
    \mathcal{L} = \{g \cdot \Phot_{\mathsf{std}} \mid g \in G\}.
\end{equation*}

The following lemma is a direct consequence of Lemma~\ref{lem_action_U_on_A}: 
\begin{lem}\label{lem_stab_u_std}
    One has $U^+ \cdot \Phot_{\mathsf{std}} = \Phot_{\mathsf{std}}$.
\end{lem}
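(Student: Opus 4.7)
The plan is to derive this as an essentially immediate consequence of Lemma~\ref{lem_action_U_on_A}, exploiting only that $\Phot_{\mathsf{std}}$ is closed in $\SB(G)$ and that $U^+$ is a group.

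First I would fix $u = \exp(Y) \in U^+$ and invoke Lemma~\ref{lem_action_U_on_A} to obtain a scalar $\lambda \in \mathbb{R}$ such that
\begin{equation*}
    u \cdot \isl([1:t]) = \isl\!\left(\left[1:\tfrac{t}{1+\lambda t}\right]\right) \in \Phot_{\mathsf{std}}
\end{equation*}
for every $t \in \mathbb{R} \smallsetminus \{-\lambda^{-1}\}$ (with the convention $-\lambda^{-1} = \infty$ when $\lambda = 0$). The points $\isl([1:t])$ with $t$ ranging over this set form a subset of $\Phot_{\mathsf{std}}$ whose complement consists of at most two points of the topological circle $\Phot_{\mathsf{std}}$, namely $\isl([0:1])$ and, if $\lambda \ne 0$, $\isl([1:-\lambda^{-1}])$. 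Hence $u$ sends a dense subset of $\Phot_{\mathsf{std}}$ into $\Phot_{\mathsf{std}}$.

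Next, since $u$ acts continuously on $\SB(G)$ and $\Phot_{\mathsf{std}}$ is closed in $\SB(G)$ (being the unique closed orbit of a photon-generating action), passing to the closure yields $u \cdot \Phot_{\mathsf{std}} \subset \Phot_{\mathsf{std}}$. Because $U^+$ is a group, the same argument applied to $u^{-1} \in U^+$ gives the reverse inclusion, hence $u \cdot \Phot_{\mathsf{std}} = \Phot_{\mathsf{std}}$. Since $u$ was arbitrary, this proves $U^+ \cdot \Phot_{\mathsf{std}} = \Phot_{\mathsf{std}}$.

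There is no real obstacle here: Lemma~\ref{lem_action_U_on_A} has already done all the computational work (in particular the appeal to Lemma~\ref{lem_auxiliary_lemma} that handles the non-$v^+$ components of $Y$), and what remains is a short density-plus-continuity argument together with the group structure of $U^+$. The only small point to verify is that the finite set of excluded parameters does not exhaust $\Phot_{\mathsf{std}}$, which is immediate from $\Phot_{\mathsf{std}}$ being a circle.
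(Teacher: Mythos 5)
Your proof is correct and takes essentially the same approach as the paper, which simply asserts that the lemma follows from Lemma~\ref{lem_action_U_on_A}. You fill in the implicit step: the M\"obius-type formula from Lemma~\ref{lem_action_U_on_A} shows that $u = \exp(Y)$ maps the cofinite (hence dense) subset $\isl(\{[1:t] : t \ne -\lambda^{-1}\})$ of the circle $\Phot_{\mathsf{std}}$ into $\Phot_{\mathsf{std}}$; since $u$ is a homeomorphism of $\SB(G)$ and $\Phot_{\mathsf{std}}$ is closed (it is the unique closed orbit of the photon-generating action), this yields $u \cdot \Phot_{\mathsf{std}} \subset \Phot_{\mathsf{std}}$, and applying the same to $u^{-1} \in U^+$ gives equality. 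An alternative way to close the gap, closer to what the paper's proof of Lemma~\ref{lem_action_U_on_A} actually establishes, is to observe that the identity $\exp(Y)\cdot\isl(x) = \isl\big(e^{\lambda\E}\cdot x\big)$ extends by continuity to all $x \in \mathbb{P}(\mathbb{R}^2)$, so $\exp(Y)$ acts on $\Phot_{\mathsf{std}}$ as a projective transformation; but your density-plus-continuity argument is equally valid and requires only the stated form of Lemma~\ref{lem_action_U_on_A}.
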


\subsubsection{Parametrization of a photon} Given a photon $\Phot \in \mathcal{L}$, there exists $g \in G$ such that $\Phot = g \cdot \Phot_{\mathsf{std}}$. The map $\isl_g$ defined as
\begin{equation}\label{eq_param_photons_2}
    \isl_g : \begin{cases}
        \mathbb{P}(\mathbb{R}^2) &\longrightarrow \SB(G) \\
        x &\longmapsto g \cdot \isl(x)
    \end{cases}
\end{equation}
is then a parametrization of $\Phot$. A priori, this parametrization depends on the choice of~$g \in G$ such that $\Phot = g \cdot \Phot_{\mathsf{std}}$ (although its image does not). The next lemma shows that two parametrizations given by different choices of $g \in G$ such that $\Phot = \Ad(g) \cdot \Phot_{\mathsf{std}}$ only differ by a projective reparametrization of $\mathbb{P}(\mathbb{R}^2)$. 

\begin{lem}\label{lem_parametrization} One has 
\begin{equation*}
    \operatorname{Stab}_G (\Phot_{\mathsf{std}}) = \plong(\SL_2(\mathbb{R})) \times \operatorname{Cent}_G \left(\plong\left(\SL_2(\mathbb{R})\right)\right),
\end{equation*}
where $\operatorname{Stab}_G (\Phot_{\mathsf{std}})$ is the stabilizer of $\Phot_{\mathsf{std}}$ in $G$ and $\operatorname{Cent}_G \left(\plong\left(\SL_2(\mathbb{R})\right)\right)$ is the centralizer in $G$ of the group $\plong(\SL_2(\mathbb{R}))$.
\end{lem}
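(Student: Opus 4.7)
The plan is to establish both inclusions, noting afterwards that the displayed product is almost direct.

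For the inclusion $\plong(\SL_2(\mathbb{R})) \cdot \operatorname{Cent}_G(\plong(\SL_2(\mathbb{R}))) \subseteq \operatorname{Stab}_G(\Phot_{\mathsf{std}})$, the factor $\plong(\SL_2(\mathbb{R}))$ trivially preserves $\Phot_{\mathsf{std}}$ since the latter is its orbit. For any $c \in \operatorname{Cent}_G(\plong(\SL_2(\mathbb{R})))$, the diffeomorphism $x \mapsto c \cdot x$ of $\SB(G)$ is $\plong(\SL_2(\mathbb{R}))$-equivariant, so $c \cdot \Phot_{\mathsf{std}}$ is a single closed $\plong(\SL_2(\mathbb{R}))$-orbit; the uniqueness of the closed orbit of a photon-generating action then forces $c \cdot \Phot_{\mathsf{std}} = \Phot_{\mathsf{std}}$.

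For the reverse inclusion, given $g \in \operatorname{Stab}_G(\Phot_{\mathsf{std}})$, I would use that $G$ acts algebraically on the projective variety $\SB(G)$ and that $\Phot_{\mathsf{std}} \cong \mathbb{P}^1(\mathbb{R})$ is an algebraic subvariety, so that the restriction $g|_{\Phot_{\mathsf{std}}}$ is a biregular automorphism of $\mathbb{P}^1$, i.e., a Möbius transformation in $\operatorname{PGL}_2(\mathbb{R})$. Since $\plong(\SL_2(\mathbb{R}))$ surjects, via its action on $\Phot_{\mathsf{std}}$, onto $\operatorname{PSL}_2(\mathbb{R}) \subseteq \operatorname{PGL}_2(\mathbb{R})$, one can choose $h \in \plong(\SL_2(\mathbb{R}))$ (possibly after correction by an orientation-reversing involution sitting in $\operatorname{Cent}_G(\plong(\SL_2(\mathbb{R})))$) so that $c := h^{-1} g$ acts as the identity on $\Phot_{\mathsf{std}}$.

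The main obstacle is then to show that such a $c$ lies in $\operatorname{Cent}_G(\plong(\SL_2(\mathbb{R})))$. A direct computation yields $(c k c^{-1}) \cdot x = c \cdot (k \cdot x) = k \cdot x$ for every $k \in \plong(\SL_2(\mathbb{R}))$ and every $x \in \Phot_{\mathsf{std}}$, so $c k c^{-1}$ and $k$ agree pointwise on the photon. Promoting this pointwise agreement to equality in $G$ is the subtle step: I would examine the differential $d_x c$ at points $x \in \Phot_{\mathsf{std}}$, which must preserve both the tangent line to $\Phot_{\mathsf{std}}$ and the $\plong(\SL_2(\mathbb{R}))$-module decomposition of $T_x \SB(G)$; combined with the faithfulness of the $G$-action on $\SB(G)$ modulo the finite kernel of the adjoint representation, this rigidity forces $c k c^{-1} = k$ in $G$, giving the factorization $g = h \cdot c$. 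The decomposition is an almost direct product because $\plong(\SL_2(\mathbb{R})) \cap \operatorname{Cent}_G(\plong(\SL_2(\mathbb{R})))$ coincides with the center of $\plong(\SL_2(\mathbb{R}))$, which is finite and contained in $\{\operatorname{id}, \plong(-\operatorname{id})\}$ by the hypothesis that $\ker \plong \subseteq \{\pm\operatorname{id}\}$.
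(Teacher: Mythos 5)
Your argument for the easy inclusion is fine and in fact a bit more explicit than the paper's: you observe that $c\cdot\Phot_{\mathsf{std}}$ is again a closed $\plong(\SL_2(\mathbb{R}))$-orbit and invoke uniqueness, whereas the paper simply asserts the inclusion ``from the definition''. Your route to the reverse inclusion also differs: you argue abstractly that the algebraic restriction $g|_{\Phot_{\mathsf{std}}}$ is a M\"obius transformation, while the paper writes $g = \exp(X)\ell\exp(Y)$ with $X\in\uu^-$, $\ell\in L$, $Y\in\uu^+$, and uses Lemma~\ref{lem_action_U_on_A} to read off explicitly the element $A=e^{\delta\F}e^{\mu\He}e^{\lambda\E}\in\SL_2(\mathbb{R})$ realizing that restriction; because the paper's computation produces $A$ directly in $\SL_2(\mathbb{R})$, orientation reversal never arises for it. Your proposed fix for the orientation-reversing case does not work: every element of $\operatorname{Cent}_G(\plong(\SL_2(\mathbb{R})))$ fixes $\Phot_{\mathsf{std}}$ pointwise (the centralizer of the transitive $\operatorname{PSL}_2(\mathbb{R})$-action on $\mathbb{P}^1$ is trivial), so it cannot reverse orientation, and no such ``orientation-reversing involution in the centralizer'' exists. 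You would need to argue directly that $\operatorname{Stab}_G(\Phot_{\mathsf{std}})$ restricts to $\operatorname{PSL}_2(\mathbb{R})$, not $\operatorname{PGL}_2(\mathbb{R})$.

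The more serious gap is exactly the step you flag as the main obstacle: showing that $c:=h^{-1}g$, which fixes $\Phot_{\mathsf{std}}$ pointwise, lies in $\operatorname{Cent}_G(\plong(\SL_2(\mathbb{R})))$. The rigidity heuristic you give is not a proof --- asserting that $d_xc$ must ``preserve the $\plong(\SL_2(\mathbb{R}))$-module decomposition of $T_x\SB(G)$'' already presupposes that $c$ commutes with the stabilizer of $x$ in $\plong(\SL_2(\mathbb{R}))$, which is essentially what is to be shown --- and the kind of rigidity you invoke actually fails. For example, in $G=\Sp(2r,\mathbb{R})$ with $r\geq 2$, take a nonzero $Y\in\g_{\varepsilon_1+\varepsilon_r}\subset\uu^+$. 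By (the proof of) Lemma~\ref{lem_action_U_on_A} with $\lambda=0$, the element $\exp(Y)$ fixes $\Phot_{\mathsf{std}}$ pointwise, and a short root computation shows $\Ad(\exp Y)$ is even trivial on $\g/\operatorname{Lie}(P)\cong T_P\SB(G)$; yet $[h_r,Y]=Y\neq 0$, so $\exp(Y)$ does not commute with $\exp(h_r)\in\plong(\SL_2(\mathbb{R}))$. Thus neither the pointwise action on the photon nor its first-order jet forces membership in the centralizer, and this step of your proof would need a genuinely different argument than the one you sketch. (The paper asserts the same implication with only the justification ``by the definition of $\Phot_{\mathsf{std}}$''; the example above is an obstruction to that reading as well, and what is actually used downstream is only the weaker fact, which your M\"obius observation already supplies, that $\operatorname{Stab}_G(\Phot_{\mathsf{std}})$ acts on $\Phot_{\mathsf{std}}$ by projective transformations.)
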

By the definition of $\Phot_{\mathsf{std}}$, the centralizer of $\plong(\SL_2(\mathbb{R}))$ acts trivially on $\Phot_{\mathsf{std}}$. Therefore Lemma~\ref{lem_parametrization} implies that $\operatorname{Stab}_G (\Phot_{\mathsf{std}})$ acts on $\Phot_{\mathsf{std}}$ by projective transformations. For any photon $\Phot$, the set $\Phot$ is a $G$-translate of $\Phot_{\mathsf{std}}$, so $\operatorname{Stab}_G (\Phot)$ acts on $\Phot$ by projective transformations.

\begin{proof}[Proof of Lemma~\ref{lem_parametrization}]
The inclusion $\plong(\SL_2(\mathbb{R})) \times \operatorname{Cent}_G \left(\plong\left(\SL_2(\mathbb{R})\right)\right) \subset \operatorname{Stab}_G (\Phot_{\mathsf{std}}) $ follows from the definition of $\Phot_{\mathsf{std}}$. Let us prove the converse inclusion.

Let $g \in \operatorname{Stab}_G (\Phot_{\mathsf{std}})$. First assume that $g P \in \Affstd$. Then, by~\eqref{eq_A_egal_exp}, we can write $g = \exp(X) \ell \exp(Y)$ with $Y \in \uu^+$, $X \in \uu^-$ and $\ell \in L$. Since $g$ stabilizes $\Phot_{\mathsf{std}}$, there exist~$\mu, \delta \in \mathbb{R}$ such that $\ell = \exp(\mu h_r)$ and $X = \delta v^-$. By Lemma~\ref{lem_action_U_on_A}, there exists $\lambda \in \mathbb{R}$ such that for all $t \in  \mathbb{R} \smallsetminus \left\{ -\lambda^{-1} \right\}$, one has
\begin{equation*}
    g \cdot \isl([1: t]) = \exp(X) \exp(\mu h_r)\cdot \isl\Big{(}\Big{[} 1: \frac{t}{1+ \lambda t}\Big{]}\Big{)} =  \exp(X) \exp(\mu h_r)\cdot \isl\left(e^{\lambda \F}\left[ 1: z\right]\right). 
\end{equation*}
Moreover, one has $\exp(X) = \plong \left(e^{\delta \E}\right)$ and $\ell = \plong \left(e^{\mu \He} \right)$. Then, by $\plong$-equivariance of $\isl$, one has $g \cdot \isl([1: t]) =  \isl\left(A\cdot  \left[ 1: t\right]\right)$
with $A = e^{\delta \F} e^{\mu \He} e^{\lambda \E} \in 
 \SL_2(\mathbb{R})$. By continuity, this equality holds for all $x \in \mathbb{P}(\mathbb{R}^2)$. In particular, the element $\plong(A)^{-1}g$ fixes every point of $\Phot_{\mathsf{std}}$. By the definition of $\Phot_{\mathsf{std}}$, this implies that $\plong(A)^{-1}g \in \operatorname{Cent}_G \left(\plong\left(\SL_2(\mathbb{R})\right)\right)$.

Now if $g P \notin \Affstd$, since $g$ preserves $\Phot_{\mathsf{std}}$ one must have $g P = \isl ([0:1])$. Since $\SL_2(\mathbb{R})$ acts transitively on $\mathbb{P}(\mathbb{R}^2)$, there exists $B \in \SL_2(\mathbb{R})$ such that  
$$gP = \isl([0:1]) = \isl(B \cdot [1:0]) = \plong(B) \isl([1: 0]),$$
so that $\plong(B)^{-1}g \in \operatorname{Stab}_{G}(\Phot_{\mathsf{std}})$ satisfies $\plong(B)^{-1}g P = P \in \mathbb{A}$. Then by the previous case, one has $\plong(B)^{-1}g \in \plong(\SL_2(\mathbb{R})) \times \operatorname{Cent}_G \left(\plong\left(\SL_2(\mathbb{R})\right)\right)$. Hence 
$$g = \plong(B) \plong(B)^{-1} g \in \plong(\SL_2(\mathbb{R})) \times \operatorname{Cent}_G \left(\plong\left(\SL_2(\mathbb{R})\right)\right) \quad \qed$$ 
\end{proof}

\subsubsection{Photons in the standard affine chart.}
The following lemma states that photons intersecting $\Affstd$ are compactifications of certain affine lines of $\Affstd$.

\begin{lem}\label{lem_photons_affine}
   Let $\Phot \in \mathcal{L}$. If $\Phot \cap \Affstd$ is nonempty, then it is an affine line in $\Affstd$, and~$\Phot \cap \hyp_{P^\opp}$ is a singleton.
\end{lem}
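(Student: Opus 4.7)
The plan is to reduce an arbitrary photon meeting $\Affstd$ to the standard photon $\Phot_{\mathsf{std}}$ by an element of the Levi subgroup $L$, and then exploit that $L$ preserves $\Affstd$ and acts linearly on it.

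First, I would reduce to the case $P \in \Phot$. Pick any $y \in \Phot \cap \Affstd$ and, using~\eqref{eq_A_egal_exp}, write $y = \exp(X)P$ with $X \in \uu^-$. Since $\exp(-X) \in U^-$ acts on $\Affstd$ as an affine translation (as $\uu^-$ is abelian) and preserves $\Affstd$, it suffices to prove the conclusion for $\exp(-X) \cdot \Phot$. So we may assume $P \in \Phot$.

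Next, I would establish that $\Phot = \ell \cdot \Phot_{\mathsf{std}}$ for some $\ell \in L$. Write $\Phot = g \cdot \Phot_{\mathsf{std}}$ for some $g \in G$. The assumption $P \in \Phot$ gives $g^{-1} \cdot P \in \Phot_{\mathsf{std}} = \isl(\mathbb{P}(\mathbb{R}^2)) = \plong(\SL_2(\mathbb{R})) \cdot P$, so there exists $h \in \SL_2(\mathbb{R})$ with $g\plong(h) \in P$. Since $\plong(h)$ stabilizes $\Phot_{\mathsf{std}}$, replacing $g$ by $g\plong(h)$ leaves $\Phot$ unchanged, and we may assume $g \in P$. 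Writing $g = \ell u$ with $\ell \in L$ and $u \in U^+$ (possible since $U^+$ is normal in $P = U^+ \rtimes L$), Lemma~\ref{lem_stab_u_std} gives $u \cdot \Phot_{\mathsf{std}} = \Phot_{\mathsf{std}}$, hence $\Phot = \ell \cdot \Phot_{\mathsf{std}}$.

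Finally, I would conclude using that $L$ stabilizes both $P$ and $P^{\opp}$, so it preserves $\Affstd = \SB(G) \smallsetminus \hyp_{P^{\opp}}$ and acts linearly there: for any $X \in \uu^-$, one has $\ell \cdot \varphi_{\mathsf{std}}(X) = \ell \exp(X)\ell^{-1} \cdot P = \varphi_{\mathsf{std}}(\Ad(\ell)X)$. By~\eqref{eq_param_photons}, $\Phot_{\mathsf{std}} \cap \Affstd = \varphi_{\mathsf{std}}(\mathbb{R} v^-)$ is an affine line and $\Phot_{\mathsf{std}} \setminus \Affstd = \{\isl([0:1])\}$ is a single point. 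Applying the affine map $\ell$ then yields
$$\Phot \cap \Affstd = \varphi_{\mathsf{std}}(\mathbb{R} \cdot \Ad(\ell) v^-),$$
which is an affine line of $\Affstd$, and $\Phot \cap \hyp_{P^{\opp}} = \Phot \setminus \Affstd = \{\ell \cdot \isl([0:1])\}$, a singleton.

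The main obstacle is the second step: the reduction from $g \in G$ to $g \in P$ is routine, but one needs to further strip off the $U^+$-factor. This is possible only because $U^+$ stabilizes $\Phot_{\mathsf{std}}$ (Lemma~\ref{lem_stab_u_std}); in its absence, the group $P$ does not preserve $\Affstd$ (since $U^+$ does not fix $P^{\opp}$), and the linear action argument of the third step would be unavailable.
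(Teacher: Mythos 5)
Your proof is correct and follows essentially the same approach as the paper. Both reductions hinge on the same two ingredients: Lemma~\ref{lem_stab_u_std} (to strip off the $U^+$-factor) and the $U^-LU^+$ decomposition of the big cell via~\eqref{eq_A_egal_exp}; the paper reaches $\Phot = \exp(X)\ell \cdot \Phot_{\mathsf{std}}$ in one stroke, whereas you first translate the photon by $\exp(-X)$ to get $\Phot = \ell\cdot\Phot_{\mathsf{std}}$ and then apply the linear $L$-action, but this is only a cosmetic reorganization of the same argument.
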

\begin{proof}[Proof]
Assume that $\Phot \cap \Affstd \ne \emptyset$. Let $g \in G$ be such that $\Phot = g \cdot \Phot_{\mathsf{std}}$. Since $\Phot$ is not contained in $\hyp_{P^\opp}$, there exists $t \in \mathbb{R}$ such that $g\exp(tv^-)P \in \Affstd$. Since $\exp(tv^-)$ stabilizes $\Phot_{\mathsf{std}}$, we have $g\exp(tv^-) \cdot \Phot_{\mathsf{std}}  = \Phot$. Hence, to replacing $g$ with $g\exp(tv^-)$, we may assume that $gP \in \Affstd$. 

By~\eqref{eq_A_egal_exp}, one can thus write $g = \exp(X) \ell \exp(Y)$ with $X \in \uu^-$, $Y \in \uu^+$ and $\ell \in L$. By Lemma~\ref{lem_stab_u_std}, one then has $g \cdot \Phot_{\mathsf{std}} = \exp(X) \ell \cdot \Phot_{\mathsf{std}}$. Hence 
\begin{equation}\label{eq_affine_line_in_u}
    \varphi_{\mathsf{std}}^{-1}\left(\Phot \cap \Affstd\right) = \varphi_{\mathsf{std}}^{-1}\left((g \cdot \Phot_{\mathsf{std}}) \cap \Affstd\right) = X + \Ad(\ell) \g_{- \alpha_r} \subset \uu^-
\end{equation}
is an affine line of $\uu^-$. Hence $\Phot \cap \Affstd$ is an affine line of $\Affstd$ for the canonical affine structure. 

Moreover, the map $\isl_{g'}$, with $g' = \exp(X) \ell$, is a parametrization of $\Phot$. By~\eqref{eq_affine_line_in_u}, one has $\isl_{g'} ([1:t]) = \exp(X + t\Ad(\ell)  v^-)P \in \mathbb{A}$ for all $t \in \mathbb{R}$. Hence the only point of~$\Phot \cap \hyp_{P^\opp}$ is $\isl_{g'}([0:1])$, and $\Phot \cap \hyp_{P^\opp}$ is a singleton. $\qed$
\end{proof}

Lemma~\ref{lem_photons_affine} implies that for all $\xi \in \SB(G)$ such that $\Phot_{\mathsf{std}} \not\subset \hyp_{\xi}$, the set~$\Phot_{\mathsf{std}} \cap \hyp_{\xi}$ is a singleton.

\begin{rmk}\label{rmk_photons_included_in_cone}
(1) It follows directly from the definition of $c^0$ that $(\Phot \cap \Affstd) \subset \mathbf{C}(x)$ for any $x \in \Affstd$ and $\Phot \in \mathcal{L}$ such that $x \in \Phot$. Moreover, given a point $x \in \SB(G)$, we have the equivalence: $\Phot_{\mathsf{std}} \subset \hyp_{x}$ if, and only if, $x \in \Phot_{\mathsf{std}}$.

(2) More generally, given any $\mathfrak{sl}_2$-triple of $\g$, the unique closed orbit of the induced action of $\SL_2(\mathbb{R})$ on $\SB(G)$ is a topological circle, sometimes simply called a \emph{circle}. In \cite{labourie2021positivity}, \emph{positive circles} are defined from triples $\tr = (e,h,f)$ where $f$ is conjugate to an element of $c^0$. Here we state their definition only in the context of Shilov boundaries of $\HTT$s, but positive circles are defined in any flag manifold admitting a $\Theta$-positive structure and are particular \emph{magical $\mathfrak{sl}_2$-triples} \cite{bradlowgeneral}.

(3) The definition of a positive circle \cite{labourie2021positivity} is in some sense opposite to that of a photon, and its properties are opposite to those of photons (for instance, the centralizer of the associate embedding of $\SL_2(\mathbb{R})$ into $G$ is compact).

(4) Photons have already been defined in \cite{van2019rigidity} for the Grassmannians $\operatorname{Gr}_p(\mathbb{R}^n)$, where they are called \emph{rank-one lines}. For the natural identification of $\Lag_r(\mathbb{R}^{2r})$ with a submanifold of $\operatorname{Gr}_r(\mathbb{R}^{2r})$, the photons of $\Lag_r(\mathbb{R}^{2r})$ are exactly the rank-one lines of~$\operatorname{Gr}_r(\mathbb{R}^{2r})$ that are contained in $\Lag_r(\mathbb{R}^{2r})$.
\end{rmk}

\subsection{Intersection polynomials}\label{sect_inter_poly} In this section we define the \emph{intersection polynomials}, which algebraically describe the intersection between the standard photon $\Phot_{\mathsf{std}}$ and the nontransverse set $\hyp_{\xi}$ of a point $\xi \in \SB(G)$ such that $\Phot_{\mathsf{std}} \not\subset \hyp_{\xi}$. 

Recall the notation of Sections~\ref{sect_irred_semi} and~\ref{sect_irred_htt}. Fix a finite-dimensional real irreducible linear representation $(V, \rho)$ of $G$ with highest weight $\chi = N \omega_{r}$ for some $N \in \mathbb{N}_{> 0}$. We let $\iota: \SB(G) \hookrightarrow \mathbb{P}(V)$ and $\iota^*: \SB(G) \hookrightarrow \mathbb{P}(V^*)$ be the two embeddings induced by $\rho$ by Proposition~\ref{prop_ggkw}, and we fix a vector $e_1 \in V^{\chi} \smallsetminus \{0\}$. By Lemma~\ref{lem_degree_rep} and Equation~\eqref{eq_param_photons}, we have, for $t \in \mathbb{R}$,
\begin{equation}\label{forme_de_la_ddr1_2}
    \iota \circ \isl ([1:t])
=  [e^{t \rho_*(v^-)}\cdot e_1]
    = \big{[}e_1 + t \rho_*(v^-)e_1 + \cdots + \frac{t^N}{N!} \rho_*(v^-)^N e_1 \big{]} 
\end{equation}

Let us define the dense open subset
\begin{equation*}
    \Contlam := \SB(G) \smallsetminus \Phot_{\mathsf{std}}
\end{equation*}
of $\SB(G)$. By the ``moreover" part of Remark~\ref{rmk_photons_included_in_cone}.(1), we have $\Contlam = \left\{ \xi \in \SB(G) \mid \Phot_{\mathsf{std}} \not\subset \hyp_{\xi}\right\}$. Given $\xi \in \Contlam$, we choose any lift $f \in V^* \smallsetminus \{0\}$ of $\iota^*(\xi) \in \mathbb{P}(V^*)$. Since $\Phot_{\mathsf{std}} \not\subset \hyp_{\xi}$, by Proposition~\ref{prop_ggkw}, the polynomial defined by
\begin{equation*}
    f\big{(}e^{t \rho_*(v^-)} \cdot e_1 \big{)} = f(e_1) + t f(\rho_*(v^-)e_1) + \cdots + t^N f(\rho_*(v^-)^N e_1) \quad \forall t \in \mathbb{R}
\end{equation*}
is nonzero. Then there exists a maximal $0 \leq n(\xi) \leq N$ such that 
$f(\rho_*(v^-)^{n(\xi)} e_1) \ne 0$,  
and $n(\xi)$ does not depend on the choice of the lift $f$ of $\iota^*(\xi)$. Hence we may choose~$f$ such that $f(\rho_*(v^-)^{n(\xi)} e_1) = n(\xi)!$. This defines a polynomial $\mathsf{Q}_\xi^\rho$ with coefficients in~$\mathbb{R}$, depending only on $\xi$ and $(V, \rho)$:
\begin{equation}\label{eq_inter_pol}
  \mathsf{Q}_\xi^{\rho}(t) = f\big{(}e^{t \rho_*(v^-)} \cdot e_1 \big{)} = f(e_1) + t f(\rho_*(v^-)e_1) + \cdots + t^{n(\xi)} \quad \forall t \in \mathbb{R}.
\end{equation}

\begin{definition}
    Given a point $\xi \in \Contlam$, the polynomial $\mathsf{Q}_\xi^{\rho}$ defined in~\eqref{eq_inter_pol} is called the \emph{intersection polynomial of $\xi$ associated with the representation $(V, \rho)$}.
\end{definition}
Now let 
\begin{equation}\label{eq_A_rho}
    \mathcal{A}_{\rho} := \left\{\xi \in \SB(G) \mid  [\rho_*(v^-)^N e_1] \notin \iota^*(\xi) \right\} \subset \Contlam
\end{equation}
be the set of all elements of $\Contlam$ such that $n(\xi)$ is maximal.

\begin{lem}\label{lem_A_dense}
    The set $\mathcal{A}_\rho$ is open and dense in $\SB(G)$.
\end{lem}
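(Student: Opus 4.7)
The plan is to exhibit $\mathcal{A}_\rho$ as the complement in $\SB(G)$ of a single nontransversality hypersurface $\hyp_{x_0}$, for a well-chosen point $x_0 \in \SB(G)$. Granted this identification, openness and density are immediate, because $\hyp_{x_0}$ is a proper closed algebraic subvariety of $\SB(G)$ (its complement being the big Schubert cell attached to $x_0$).

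The natural candidate is $x_0 := \isl([0:1]) \in \Phot_{\mathsf{std}}$. To see that $\iota(x_0) = [\rho_*(v^-)^N e_1]$, I would start from~\eqref{forme_de_la_ddr1_2}, which reads
$$\iota \circ \isl([1:t]) = \Big[e_1 + t \rho_*(v^-) e_1 + \cdots + \frac{t^N}{N!} \rho_*(v^-)^N e_1\Big] \quad \forall t \in \mathbb{R},$$
where the expansion terminates at order $N$ by Lemma~\ref{lem_degree_rep}. As $t \to \infty$ in $\mathbb{P}(\mathbb{R}^2)$ we have $[1:t] \to [0:1]$, and rescaling the vector inside the brackets by $t^{-N}$ — whose leading coefficient $\rho_*(v^-)^N e_1 / N!$ is nonzero, again by Lemma~\ref{lem_degree_rep} — the limit in $\mathbb{P}(V)$ is $[\rho_*(v^-)^N e_1]$. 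Continuity of $\iota \circ \isl$ then gives $\iota(x_0) = [\rho_*(v^-)^N e_1]$.

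With this identification in hand, the transversality criterion of Proposition~\ref{prop_ggkw}(2), combined with the self-opposite identification $\SB(G) \simeq \SB(G)^{\opp}$ from Section~\ref{sect_root_syst_shilov}, yields
$$\mathcal{A}_\rho = \{\xi \in \SB(G) \mid \iota(x_0) \notin \iota^*(\xi)\} = \{\xi \in \SB(G) \mid \xi \text{ is transverse to } x_0\} = \SB(G) \smallsetminus \hyp_{x_0},$$
which is open and dense in $\SB(G)$.

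No step of this argument is genuinely hard: the only content lies in the limiting computation that identifies the point $x_0$, and the rest is a direct application of Proposition~\ref{prop_ggkw}(2) together with the standard topological properties of the affine chart $\SB(G) \smallsetminus \hyp_{x_0}$.
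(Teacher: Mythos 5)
Your proof is correct, and it follows a genuinely different route from the paper. The paper's proof is purely abstract: it cites a spanning lemma (for any open subset $\mathcal{O}$ of $\SB(G)$, one can pick finitely many points whose dual lines span $V^*$, a consequence of irreducibility), and then argues that if the complement of $\mathcal{A}_\rho$ had nonempty interior, the vector $\rho_*(v^-)^N e_1$ would be annihilated by all of $V^*$, contradicting Lemma~\ref{lem_degree_rep}. You instead give a concrete geometric identification: you compute the limit $\iota\circ\isl([1:t])\to[\rho_*(v^-)^N e_1]$ as $t\to\infty$, conclude $\iota(\isl([0:1]))=[\rho_*(v^-)^N e_1]$, and then read off $\mathcal{A}_\rho=\SB(G)\smallsetminus\hyp_{x_0}$ via Proposition~\ref{prop_ggkw}(2). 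Your limit computation is correct (divide the bracketed vector by $t^N$; the leading term $\rho_*(v^-)^N e_1/N!$ is nonzero by Lemma~\ref{lem_degree_rep}), and the conclusion follows since affine charts are open and dense. Your version actually proves slightly more than the lemma states --- it identifies $\mathcal{A}_\rho$ as a big Schubert cell, which also makes the inclusion $\mathcal{A}_\rho\subset\Contlam$ transparent (as $x_0\in\Phot_{\mathsf{std}}$, one has $\Phot_{\mathsf{std}}\subset\hyp_{x_0}$ by Remark~\ref{rmk_photons_included_in_cone}.(1)). The paper's argument trades this explicit description for a shorter computation-free proof that rests entirely on irreducibility.
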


\begin{proof}[Proof]
By irreducibility of $\rho$, for any open set $\mathcal{O} \subset \SB(G)$, there exist $x_1, ..., x_D \in \mathcal{O}$ such that $\iota^*(\xi_1) \oplus ... \oplus \iota^*(\xi_D) = V^*$ (see e.g.\ \cite[Lem.\ 4.7]{zimmer2018proper}). If the set 
$$F :=\{ \xi \in \SB(G) \mid [\rho_*(v^-)^N e_1]  \in \iota^*(\xi)\}$$
had nonempty interior, then we would have $f(\rho_*(v^-)^N e_1) = 0$ for all $f \in V^*$. This is absurd because $\rho_*(v^-)^N e_1 \ne 0$ by Lemma~\ref{lem_degree_rep}, so $F$ has empty interior and $\mathcal{A}_{\rho}$ is a dense open subset of $\SB(G)$. $\qed$
\end{proof}

Let $\xi \in \Contlam$ and $t \in \mathbb{R}$. One has 
\begin{align*}
    \mathsf{Q}_{\xi}^\rho(t) = 0 \ &\Longleftrightarrow \ f\big{(}e^{t \rho_*(v^-)} \cdot e_1 \big{)}\Longleftrightarrow \  \iota(\exp(tv^-)P) \in \iota^*(\xi) \\
    &\Longleftrightarrow \ \exp(tv^-)P \in \hyp_{\xi} \Longleftrightarrow \ \isl([1:t]) = \exp(tv^-)P \in \Phot_{\mathsf{std}} \cap \hyp_{\xi},
\end{align*}
the last equivalence holding by~\eqref{eq_param_photons} and Proposition~\ref{prop_ggkw}. Hence the real roots of $\pol_\xi^\rho$ describe the intersection points of $\Phot_{\mathsf{std}} \cap \Affstd$ with $\hyp_{\xi}$. By Lemma~\ref{lem_photons_affine}, the set~$\Phot_{\mathsf{std}} \cap \hyp_{\xi}$ is a singleton, so the polynomial $\mathsf{Q}_\xi^\rho$ has at most one real root $t$, satisfying $\isl([1:t]) \in  \hyp_{\xi}$. We will see in Section~\ref{sect_comp_cara_kob} that the complex roots of $\pol_\xi^\rho$ also describe the intersection of two sets, corresponding to complexifications of $\Phot_{\mathsf{std}} \cap \Affstd$ and $\hyp_\xi$. This is why we call the polynomial $\mathsf{Q}_\xi^\rho$ the intersection polynomial of $\xi$.

\section{An invariant metric}\label{sect_dist_kob}
In this section we define the \emph{Kobayashi metric} of a proper domain $\O$ of $\SB(G)$. Constructions of Kobayashi metrics are classical and were initiated by S. Kobayashi \cite{kobayashi1967invariant, kobayashi1984projectively}. The properties of the Kobayashi metric (in particular, its properness, see Corollary~\ref{cor_kobayashi_geodesic}) in the almost-homogeneous case will allow us to relate the geometry of the boundary of a proper almost-homogeneous domain to the dynamics of its automorphism group in Section~\ref{sect_dyn_bndr}.

\subsection{A Kobayashi metric}
In this section, the group $G$ is an $\HTT$ Lie group.
\subsubsection{Chains.} Let $\O \subset \SB(G)$ be a domain, not necessarily proper. We say that two points $x_1,x_2 \in \O$ are \emph{conjugate}, denoted by $x_1 \sim x_2$, if they belong to the same photon~$\Phot$ and are in the same connected component of $\Phot \cap \O$. 

Now let $x,y \in \O$ be any two points. An \emph{$N$-chain from $x$ to $y$} ($N\in\mathbb{N}$) is a sequence of $(N+1)$ elements $(x_0 =x, \cdots , x_N = y)$ of $\O$ such that $x_i \sim x_{i+1}$ for all $0 \leq i \leq N-1$. We denote by $\mathcal{C}_{x,y}(\O)$ (resp.\ $\mathcal{C}_{x,y}^N(\O)$) the set of  all the chains (resp.\ $N$-chains) from $x$ to $y$ in $\O$.

In this section, we prove that there is a bound $\nG$ depending only on $G$ such that, locally, one can always join two points $x,y \in \SB(G)$ by an $\nG$-chain.

\begin{lem}\label{lem_existence_N_uniform}
There is an integer $n \geq 2$ only depending on $G$, such that for any two transverse points $p,q \in \SB(B)$ and any diamond $D \subset \SB(G)$ with endpoints $p$ and~$q$, there is a sequence $(x_0 = p, x_1, \cdots, x_{n} = q)$ of $(n+1)$ elements of $\overline{D}$ such that $x_i$ and~$x_{i+1}$ are on the same photon for all $0 \leq i \leq n-1$. 
\end{lem}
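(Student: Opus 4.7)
By Fact~\ref{fact_autom_diam}, every diamond of $\SB(G)$ is a $G$-translate of $\Diams_{\mathsf{std}}$ together with its pair of endpoints, and the $G$-action permutes photons and preserves closures of diamonds. Hence it suffices to exhibit a single such chain in $\overline{\Diams_{\mathsf{std}}}$ joining $P$ to $P^{\opp}$; the resulting bound will depend only on the real rank $r$ of $G$, hence only on $G$.

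The plan is to exploit the $C_r$ structure of $\Sigma$ (Section~\ref{sect_root_syst_shilov}) together with the Euclidean Jordan algebra structure on $\uu^-$ coming from the tube-type condition. For each strongly orthogonal long root $2\varepsilon_i$ ($1 \leq i \leq r$, with $2\varepsilon_r = \alpha_r$), I would pick $v^{\pm}_i \in \g_{\pm 2\varepsilon_i}$ so that $\tr_i := (v^+_i, h_{2\varepsilon_i}, v^-_i)$ is an $\mathfrak{sl}_2$-triple with $v^-_r = v^-$. Strong orthogonality yields $[v^{\pm}_i, v^{\pm}_j] = 0$ for $i \neq j$. Since the Weyl group of type $C_r$ acts on the $\varepsilon_i$ by signed permutations, each $h_{2\varepsilon_i}$ is $N_K(\aaa)$-conjugate to $h_r$, so every $\tr_i$ is photon-generating and produces a photon $\Phot_i$ containing $P$ (because $v^-_i \in \uu^-$). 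In the realization of $\uu^-$ as a simple Euclidean Jordan algebra underlying the tube structure of $\mathbb{X}_G$ (see \cite{faraut1994analysis} and the matrix models of Section~\ref{ex_shilov_bndrs}), the $v^-_i$ can be normalized to be the primitive orthogonal idempotents of the Jordan frame; with this choice each $v^-_i$ lies in $\overline{c^0}$ and every nonnegative combination $\sum \lambda_i v^-_i$ remains in $\overline{c^0}$.

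I would then set $x_0 := P$ and $x_i := \exp(v^-_1 + \cdots + v^-_i)\cdot P$ for $1 \leq i \leq r$. Each $x_i$ lies in $\varphi_{\mathsf{std}}(\overline{c^0}) \subset \overline{\Diams_{\mathsf{std}}}$, and the relations $[v^-_j, v^-_k] = 0$ give
\[
x_i = \exp(v^-_1 + \cdots + v^-_{i-1})\cdot\bigl(\exp(v^-_i)\cdot P\bigr) \in \exp(v^-_1 + \cdots + v^-_{i-1})\cdot\Phot_i,
\]
so $x_{i-1}$ and $x_i$ lie on a common photon. A mirror construction in the dual affine chart $\SB(G)\smallsetminus \hyp_P$ around $P^{\opp}$, using the $v^+_i$'s together with the Cartan involution relating $(\uu^+, P^{\opp})$ to $(\uu^-, P)$, produces an analogous chain $y_0 = P^{\opp}, y_1, \ldots, y_r$ of $r$ photons entirely contained in $\overline{\Diams_{\mathsf{std}}}$. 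Both $x_r$ and $y_r$ lie in the interior $\Diams_{\mathsf{std}}$; since $L^0$ acts transitively on $\Diams_{\mathsf{std}}$ (Fact~\ref{fact_autom_diam}) while fixing both $P$ and $P^{\opp}$, applying a suitable $\ell \in L^0$ to the second chain allows one to arrange $\ell\cdot y_r = x_r$. The concatenated sequence $(P = x_0, x_1, \ldots, x_r, \ell\cdot y_{r-1}, \ldots, \ell\cdot y_0 = P^{\opp})$ is then a chain of $n := 2r$ photons inside $\overline{\Diams_{\mathsf{std}}}$, as required.

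The main obstacle is the verification that the $v^{\pm}_i$ can be chosen so that the partial sums $\sum_{j \leq i} v^-_j$ remain in $\overline{c^0}$ and so that the mirror chain on the $P^{\opp}$-side lands in $\overline{\Diams_{\mathsf{std}}}$ and not in the opposite diamond $\overline{\Diams_{\mathsf{std}}'}$. Both points reduce to the Jordan frame decomposition of the symmetric cone $c^0 \subset \uu^-$ and to tracking the correct sign conventions under the Cartan involution; these are standard consequences of the tube-type structure but deserve to be spelled out carefully. Once this matching is handled, the chain construction is a formal consequence of the commutativity of the $\tr_i$.
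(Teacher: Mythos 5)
Your central idea --- that commuting photons built from the strongly orthogonal roots $2\e_1,\dots,2\e_r$ give an explicit chain through $\overline{c^0}$ --- is sound and close in spirit to the paper's, but your implementation has a genuine gap precisely where you flag uncertainty, and a simpler route is available. You reduce to $\Diams_{\mathsf{std}}$ with endpoints $P$ and $P^{\opp}$. For $r \geq 2$ no photon contains both $P$ and $P^{\opp}$ (by Remark~\ref{rmk_photons_included_in_cone}.(1) together with $G$-equivariance, $P^{\opp} \in \Phot$ would force $\Phot \subset \hyp_{P^{\opp}}$, contradicting $P \in \Phot \cap \Affstd$), so you must build a second chain starting at $P^{\opp}$ and arrange that it lands in $\overline{\Diams_{\mathsf{std}}}$ rather than in $\overline{\Diams_{\mathsf{std}}'}$. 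The natural maps one would reach for to produce the mirror chain fail this test: the element $k_0$ satisfies $k_0\cdot\Diams_{\mathsf{std}} = \Diams_{\mathsf{std}}'$, and the Cartan involution carries the same sign defect. If $y_r$ lands in $\Diams_{\mathsf{std}}'$, then $L^0$-transitivity (which preserves $\Diams_{\mathsf{std}}$ and $\Diams_{\mathsf{std}}'$ separately) cannot match it with $x_r$. A repair would need either an element of $L\smallsetminus L^0$ swapping the two diamonds while fixing $P, P^{\opp}$, or a symmetry from Fact~\ref{fact_autom_diam} together with a verification that it exchanges $P$ and $P^{\opp}$; neither is carried out, and neither is quite routine.

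The paper avoids the issue entirely by not fixing the target diamond in advance. It defines $n$ implicitly as the smallest integer for which $\ell_1,\dots,\ell_n \in L^0$ exist with $\sum_k \Ad(\ell_k)v^- \in c^0$ (such $n$ exists since $c^0$ is by definition the interior of the convex hull of $\Ad(L^0)\cdot v^-$), forms the chain $x_i = \exp\bigl(\sum_{k\leq i}\Ad(\ell_k)v^-\bigr)P$, notes $x_i \in \mathbf{J}^+(x_{i-1})$ hence $x_n \in \mathbf{I}^+(P)$, then \emph{takes} $D := \Diams(P, x_n)$, whose endpoints are exactly $P$ and $x_n$, and concludes by $G$-transitivity on diamonds. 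Your Jordan-frame chain $\bigl(P, \exp(v^-_1)P, \dots, \exp(v^-_1+\cdots+v^-_r)P\bigr)$ would serve perfectly in that role --- it even gives the explicit bound $n \leq r$ for the paper's implicitly defined quantity --- if you set $D = \Diams(P, x_r)$ instead of $\Diams_{\mathsf{std}}$. Both endpoints then lie in $\Affstd$, no mirror chain is needed, and the partial sums remaining in $\overline{c^0}$ is all that has to be checked (which, as you note, is standard for a Jordan frame). In short: your building blocks are correct, but targeting $\Diams_{\mathsf{std}}$ creates the $P^{\opp}$-at-infinity problem that the paper's choice of diamond sidesteps.
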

\begin{proof}[Proof]
\indent Let $n$ be the smallest natural integer such that there exist $\ell_1,..., \ell_{n} \in L^0$ such that $\sum_{k=1}^{n} \Ad(\ell_k) \cdot v^- \in c^0$. The integer $n$ only depends on $G$. Define 
\begin{equation*}
    x_i = \begin{cases}
        \ \ \ P &\text{ if } i=0; \\
        \exp \big{(}\sum_{k=1}^i \Ad(\ell_k) \cdot v^- \big{)} P &\text{ if } 1 \leq i \leq n.
    \end{cases}
\end{equation*}
For $i \geq 1$, one has $x_i \in \mathbf{J}^+(x_{i-1})$, so by transitivity (Fact~\ref{lem_causality_order_relation}), one has $x_i \in \mathbf{J}^+(x_0)$. In particular, one has $x_n \in \mathbf{I}^+(x_0)$. Hence the set $D = \mathbf{I}^+(x_0) \cap \mathbf{I}^-(x_n)$ is a diamond. By construction, for any $0 \leq i \leq n-1$, the points $x_i$ and $x_{i+1}$ are on the same photon, and all the $x_i$ are contained in $\overline{D}$. 

Since $G$ acts transitively on diamonds and elements of $G$ send photons on photons, the integer $n$ is the one sought. $\qed$
\end{proof}

\begin{lem}\label{lem_paths_diamonds} There is an integer $\nG \geq 2$ depending only on $G$, such that for any diamond $D \subset \SB(G)$, the set $\mathcal{C}^{\nG}_{x,y}(D)$ is nonempty for any $x,y \in D$.
\end{lem}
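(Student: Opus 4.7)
The plan is to reduce to the standard diamond via the $G$-action, realize it as the convex cone $c^0$ in an affine chart, and build chains using a Carath\'eodory-type decomposition of vectors in $c^0$ along photon directions.

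Since every diamond is a $G$-translate of $\Diams_{\mathsf{std}}$, I would first apply some $g \in G$ and assume $D = \Diams_{\mathsf{std}} = \I^+(P)$. Under the identification $\varphi_{\mathsf{std}} \colon \uu^- \overset{\sim}{\to} \Affstd$ of~\eqref{eq_A_egal_exp}, this diamond corresponds to the open convex cone $c^0 \subset \uu^-$ from Section~\ref{sect_causalité}. By Lemma~\ref{lem_photons_affine} and its proof, every affine line in $\uu^-$ whose direction lies in $\Ad(L) \cdot \g_{-\alpha_r}$ is the trace in $\Affstd$ of a photon; in particular, each direction $\Ad(\ell) \cdot v^-$ for $\ell \in L^0$, which by definition generate $c^0$, is a photon direction.

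Set $d := \dim(\uu^-)$. I would first handle the \emph{ordered case}: given $a, b \in c^0$ with $b - a \in c^0$, Carath\'eodory's theorem for convex cones in $\uu^- \cong \Rf^d$ produces a decomposition $b - a = \sum_{k=1}^{N} t_k \Ad(\ell_k) v^-$ with $t_k > 0$, $\ell_k \in L^0$ and $N \leq d$. Setting $a_0 := a$ and $a_i := a + \sum_{k=1}^i t_k \Ad(\ell_k) v^-$, two facts must be verified: each $a_i$ stays in the \emph{open} cone $c^0$, and consecutive $a_i, a_{i+1}$ lie in the same connected component of the photon joining them intersected with $D$. The first follows from the elementary fact that $c^0 + \overline{c^0} \subset c^0$ for any open convex cone (visible via separating linear forms); the second follows from convexity of $c^0$, since the segment $[a_i, a_{i+1}]$ lies in $c^0$ and along a photon. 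This yields a chain from $a$ to $b$ of length at most $d$ in $D$.

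For arbitrary $x, y \in D$, I would take $z := x + y \in c^0 + c^0 \subset c^0 = D$, so that $z - x = y \in c^0$ and $z - y = x \in c^0$. The ordered case then produces chains of length $\leq d$ from $x$ to $z$ and from $y$ to $z$; concatenating the first with the reverse of the second (and padding with repetitions of an endpoint to reach the exact length $\nG := 2d$ if necessary) gives the required $\nG$-chain from $x$ to $y$ in $D$. The main delicate point is ensuring that intermediate chain points never escape to $\partial D$, which is handled by the open-cone containment $c^0 + \overline{c^0} \subset c^0$; beyond this, the construction is mechanical.
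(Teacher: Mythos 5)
Your proposal is correct, but it takes a different route from the paper. The paper proceeds in two steps: first (Lemma~\ref{lem_existence_N_uniform}) it defines $n$ implicitly as the smallest integer for which some sum $\sum_{k=1}^n\Ad(\ell_k)v^-$ with $\ell_k\in L^0$ lands in the \emph{open} cone $c^0$, and builds from that sum a photon-chain of length $n$ in $\overline{\Diams(P,x_n)}$ joining the two endpoints of a diamond; then (Lemma~\ref{lem_paths_diamonds}) it reduces to a diamond $\Diams(P,q)$ proper in $\Affstd$, chooses $z$ with $x,y\in\I^+(z)$, and concatenates the Lemma~\ref{lem_existence_N_uniform} chains through the nested diamonds $\overline{\Diams(z,x)},\overline{\Diams(z,y)}\subset D$, relying on convexity of $D$ to keep the chain inside $D$. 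You instead identify $D=\Diams_{\mathsf{std}}$ with the cone $c^0$ itself, invoke the cone version of Carath\'eodory's theorem to decompose any $b-a\in c^0$ into at most $d:=\dim\uu^-$ rays of the form $\Ad(\ell)v^-$, and use the elementary containment $c^0+\overline{c^0}\subset c^0$ together with convexity of $c^0$ to keep all intermediate points in the open cone and all segments inside $D$; then the common-descendant trick $z:=x+y$ gives the unordered case. Both are valid. Your route is more self-contained and gives the explicit bound $\nG=2\dim\uu^-$, whereas the paper's implicitly defined $n$ can be smaller (neither bound is claimed to be optimal; cf.~Remark~\ref{rmk_N_non_minimal}); on the other hand the paper's version of the single-diamond step is stated so as to produce a chain between the two \emph{endpoints} of a diamond within its closure, which is then reusable in other places of the argument. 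One small point worth making explicit in your write-up: the fact that the rays $\Ad(\ell)v^-$ ($\ell\in L^0$) are genuinely photon directions follows from the displayed formula~\eqref{eq_affine_line_in_u} in the proof of Lemma~\ref{lem_photons_affine}, and the same-component verification uses Lemma~\ref{lem_photons_affine} to know that the photon trace in $\Affstd$ is a single affine line, so that the segment $[a_i,a_{i+1}]\subset c^0$ really witnesses conjugacy.
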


\begin{proof}[Proof] We may assume that $D$ is of the form $D = \Diams(P,q)$, with~$q \in \mathbf{I}^+(P)$; in particular, the domain $D$ is a convex subset of $\Affstd$.

Let $z \in D$ be such that $x,y \in \mathbf{I}^+(z)$. Then $\overline{\Diams(z,x)}$ and $\overline{\Diams(z,y)}$ are both contained in $D$. Let $n$ be the integer associated with $G$ by Lemma~\ref{lem_existence_N_uniform}. Then there exists a sequence $u = (x_0 = z, \cdots , x_{n} = x)$ of $(n+1)$ elements of $\overline{\Diams(z,x)} \subset D$ such that~$x_i$ and~$x_{i+1}$ are conjugate. By convexity of $D$ in $\Affstd$, the sequence $u$ is an $n$-chain from~$z$ to $x$ in $D$. Similary, we get an $n$-chain from $z$ to $y$. Concatenating them both in $z$ gives a $2n$-chain from $x$ to $y$. Let $\nG := 2n$. This number only depends on $G$. $\qed$
\end{proof}

\begin{rmk}\label{rmk_N_non_minimal}
    The number $\nG$ found in the proof of Lemma~\ref{lem_paths_diamonds} is not optimal. Indeed, for example when $G = \SO(n,2)$, the number $\nG = 2$ works; see \cite{chalumeau2024rigidity}.
\end{rmk}

\begin{figure}[H]
    \centering
        \begin{tikzpicture}[scale=1.3]
        \draw[dashed] (1,0) arc (0:180:1 and 0.15);
        \draw[pink,thick] (0,-0.5) -- (-0.5,-0.05) -- (-0.1,0.3);
        \draw[blue,thick] (0,-0.5) -- (0.46,-0.04) -- (0.2,0.2);
          \fill[fill=white] (-0.06,-0.15) circle (1.5pt);
          \draw (-1,0) -- (0,-1) -- (1,0) -- (0,1) -- (-1,0);
          \draw (-1,0) arc (180:360:1 and 0.15);
          \fill[fill=black] (0,1) circle (1pt);
          \fill[fill=black] (0,-1) circle (1pt);
          \fill[fill=black] (0,-0.5) circle (1pt);
          \fill[fill=black] (-0.1,0.3) circle (1pt);
          \fill[fill=black] (0.2,0.2) circle (1pt);
          \fill[fill=black] (0,-0.5) circle (1pt);
          \draw node at (0.15,1.15) {$q$};
          \draw node at (0.15,-1.15) {$p$};
          \draw node at (0.15,-0.65) {$z$};
          \draw node at (-0.2,0.4) {$x$};
          \draw node at (0.35,0.3) {$y$};
        \end{tikzpicture}
    \caption{Example of path as constructed in the proof of Lemma~\ref{lem_paths_diamonds}, when $G = \SO(3,2)$. In this case, the number $\nG$ is not minimal; see Remark~\ref{rmk_N_non_minimal}.}
    \label{Geodésiques du Diams}
\end{figure}

\subsubsection{The pseudo-metric.} Let $x,y \in \O$ be two conjugate points. Let $\Phot_{x,y} \in \mathcal{L}$ be a photon containing $x$ and $y$, and let $g \in G$ be such that $\Phot_{x,y} = g \cdot \Phot_{\mathsf{std}}$. We denote by $I_{x,y}$ the connected component of $\O \cap \Phot_{x,y} $ containing $x$ and $y$. Recall the parametrization $\isl_g : \mathbb{P}(\Rf^2) \rightarrow \Phot_{x,y}$ of~\eqref{eq_param_photons_2}. We define: 
\begin{equation*}
    \ro_{\O}(x,y) := H_{\isl_g^{-1}\left(I_{x,y} \right)} \left(\isl_g^{-1}(x), \isl_g^{-1}(y) \right).
\end{equation*}
Recall that we denote by $H_I$ the Hilbert pseudo-metric of an interval $I$ of $\mathbb{P}(\mathbb{R}^2)$ (see Section~\ref{sect_cross_ratio}). Due to the $\SL_2(\mathbb{R})$-invariance of the cross ratio on $\mathbb{P}(\mathbb{R}^2)$ and Lemma~\ref{lem_parametrization}, the quantity $\ro_{\O}(x,y)$ does not depend on the choice of $g \in G$ such that $\Phot_{x,y} = g \cdot \Phot_{\mathsf{std}}$. 

\begin{definition}\label{def_kob_map}
Given a domain $\O \subset \SB(G)$, we define $\kob_{\O} : \O \times \O \rightarrow \mathbb{R}_+ \cup \{ + \infty\}$ by:
\begin{equation*}
    \forall x,y \in \O, \quad \kob_{\O}(x,y) = \inf \left\{ \sum_{i=0}^N \ro_{\O}(x_i, x_{i+1}) \mid N \in \mathbb{N}^*, \ \ (x_0, \cdots x_N) \in \mathcal{C}_{x,y}(\O) \right\}.
\end{equation*}
\end{definition}

For $x,y$ sufficiently close to each other, we can find a diamond included in $\O$ containing $x$ and $y$; see Remark~\ref{rmk_diamonds_basis_of_neighborhoods}.(1). In that case $\mathcal{C}_{x,y}(\O)$ (and even $\mathcal{C}^{\nG}_{x,y}(\O)$) is nonempty by Lemma~\ref{lem_paths_diamonds}. Since the relation ``$x$ and $y$ can be joined by a chain'' is an equivalence relation, the set $\mathcal{C}_{x,y}(\O)$ is never empty. Since $\kob_{\O}(a,b)$ is always finite whenever $a,b$ are conjugate, we know by Lemma \ref{lem_paths_diamonds} that the quantity $\kob_{\O}(x,y)$ is thus always finite as well. Thus $\kob_{\O}$ is actually a map $\O \times \O \rightarrow \mathbb{R}_{+}$.

We have the following standard naturality property:

\begin{prop}\label{prop_properties_kobayashi_metric}
Let $\O_1 $ and $ \O_2$ be two domains of $\SB(G)$. Then:
\begin{enumerate}
    \item If $\O_1 \subset \O_2$, then for any $x,y \in \O_1$ one has $\kob_{\O_2}(x,y) \leq \kob_{\O_1}(x,y)$.
    \item For any $g \in G$, for any $x,y \in \O_1$, one has $\kob_{g \cdot \O_1}(g \cdot x, g \cdot y) = \kob_{\O_1}(x,y)$. In particular, the metric $\kob_{\O_1}$ is $\Aut(\O_1)$-invariant.
\end{enumerate}
\end{prop}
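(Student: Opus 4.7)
Both statements follow directly from the definition of $\kob_{\O}$ once one verifies the analogous naturality properties of the auxiliary function $\ro_{\O}$; the plan is to reduce to that level and then invoke the infimum in Definition~\ref{def_kob_map}.

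For (1), I would start by observing that if $\O_1 \subset \O_2$, then any chain $(x_0, \ldots, x_N) \in \mathcal{C}_{x,y}(\O_1)$ is automatically a chain in $\mathcal{C}_{x,y}(\O_2)$: indeed, if $x_i, x_{i+1}$ lie in a common connected component of $\Phot \cap \O_1$ for some photon $\Phot$, then this component is contained in a single connected component of $\Phot \cap \O_2$, so $x_i \sim x_{i+1}$ in $\O_2$ as well. Denoting by $I_{x_i, x_{i+1}}^{(j)}$ the connected component of $\Phot \cap \O_j$ containing $x_i, x_{i+1}$, we have $I_{x_i, x_{i+1}}^{(1)} \subset I_{x_i, x_{i+1}}^{(2)}$. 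Pulling back through the parametrization $\isl_g^{-1}$ gives an inclusion of intervals in $\mathbb{P}(\mathbb{R}^2)$, and the classical monotonicity of the Hilbert pseudo-metric under inclusion of intervals yields $\ro_{\O_2}(x_i, x_{i+1}) \leq \ro_{\O_1}(x_i, x_{i+1})$. Summing over $i$ and taking the infimum over $\mathcal{C}_{x,y}(\O_1) \subset \mathcal{C}_{x,y}(\O_2)$ gives (1).

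For (2), the key observation is that $g \in G$ sends photons to photons: if $\Phot = g' \cdot \Phot_{\mathsf{std}}$, then $g \cdot \Phot = (gg') \cdot \Phot_{\mathsf{std}}$ is again a photon. Hence, if $x, y \in \O_1$ are conjugate along $\Phot$ in $\O_1$, then $g \cdot x, g \cdot y$ are conjugate along $g \cdot \Phot$ in $g \cdot \O_1$, with $I_{g \cdot x, g \cdot y} = g \cdot I_{x,y}$. I would then use the parametrizations $\isl_{g'}$ and $\isl_{gg'}$ of $\Phot$ and $g \cdot \Phot$ from~\eqref{eq_param_photons_2}: by definition, $\isl_{gg'} = g \circ \isl_{g'}$, so
\[
\isl_{gg'}^{-1}(g \cdot I_{x,y}) = \isl_{g'}^{-1}(I_{x,y}), \qquad \isl_{gg'}^{-1}(g \cdot x) = \isl_{g'}^{-1}(x),
\]
and similarly for $y$. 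Plugging this into the definition of $\ro$ yields $\ro_{g \cdot \O_1}(g \cdot x, g \cdot y) = \ro_{\O_1}(x,y)$. Since $g$ sends the set $\mathcal{C}_{x,y}(\O_1)$ bijectively onto $\mathcal{C}_{g \cdot x, g \cdot y}(g \cdot \O_1)$, preserving the length functional, taking the infimum yields $\kob_{g \cdot \O_1}(g \cdot x, g \cdot y) = \kob_{\O_1}(x,y)$. Specializing to $g \in \Aut(\O_1)$ gives $\Aut(\O_1)$-invariance.

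No step here is a real obstacle: the only subtlety is ensuring that the intermediate quantity $\ro_{\O}$ is well-defined independently of the choice of $g' \in G$ with $\Phot = g' \cdot \Phot_{\mathsf{std}}$, but this is already guaranteed by Lemma~\ref{lem_parametrization} together with the $\PGL_2(\mathbb{R})$-invariance of the cross ratio (used implicitly in defining $\ro$), so the naturality arguments above are purely formal.
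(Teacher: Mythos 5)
Your proposal is correct and takes essentially the same approach as the paper, which compresses the argument into a single sentence; you have simply spelled out the two standard ingredients (monotonicity of the Hilbert pseudo-metric under inclusion of intervals for (1), and the $G$-equivariance of chains together with the well-definedness of $\ro_{\O}$ from Lemma~\ref{lem_parametrization} for (2)).
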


\begin{proof}[Proof]
This is a consequence of the definition of $K_{\O}$ and of the fact that an element~$g$ of~$G$ sends a chain from $x$ to $y$ to a chain from $g \cdot x$ to $g \cdot y$. $\qed$
\end{proof}

Note that $\kob_{\O_2}$ and $\kob_{\O_1}$ do not need to be metrics in Proposition~\ref{prop_properties_kobayashi_metric}. 

As one can concatenate and reverse the orientation of a chain, the map $\kob_{\O}$ is symmetric and satisfies the triangle inequality. It is thus a pseudo-metric, and we call it the \emph{Kobayashi pseudo-metric}. In the next section, we investigate when $\kob_{\O}$ is a metric.

\subsubsection{Kobayashi hyperbolicity} Let $\O \subset \SB(G)$ be a domain. The domain $\Omega$ is said to be \emph{Kobayashi hyperbolic} if $\kob_{\O}$ is a metric, that is, if $\kob_{\O}$ separates points.

\begin{definition}
    Let $\O \subset \SB(G)$ be a Kobayashi-hyperbolic domain. Then the map~$\kob_{\O}$ defined in Definition~\ref{def_kob_map} is called the \emph{Kobayashi metric} of $\O$.
\end{definition}

\begin{prop}\label{prop_generate_topo_std}
    Proper domains of $\SB(G)$ are Kobayashi hyperbolic. Moreover, for such a domain $\O$, the metric $\kob_{\O}$ generates the standard topology.
\end{prop}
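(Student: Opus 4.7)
The plan is to sandwich $\kob_\O$ between two metrics that generate the standard topology on $\O$. By Remark~\ref{Rmk_contenu_dans_A_std} assume $\overline{\O} \subset \Affstd$, and identify $\Affstd$ with $\uu^-$ via $\varphi_{\mathsf{std}}$.

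For the \emph{lower bound}, choose any bounded convex open subset $\mathcal{O}$ of $\uu^-$ containing $\overline{\O}$ (say, a Euclidean ball). In the projective closure of $\Affstd$, the set $\mathcal{O}$ is properly convex, so the classical Hilbert metric $H_{\mathcal{O}}$ is a genuine metric generating the standard topology. For any two conjugate points $x, y \in \O$, the photon through them meets $\Affstd$ in an affine line $\ell$ (Lemma~\ref{lem_photons_affine}) that is also the line through $x$ and $y$; hence $I_{x,y} \subset \mathcal{O} \cap \ell$, and monotonicity of the Hilbert pseudo-metric under shrinking of the interval, together with the fact that Hilbert geodesics in a properly convex set are straight lines, gives
\begin{equation*}
\ro_\O(x,y) = H_{I_{x,y}}(x,y) \;\geq\; H_{\mathcal{O} \cap \ell}(x,y) = H_{\mathcal{O}}(x,y).
\end{equation*}
Summing along any chain from $x$ to $y$ and invoking the triangle inequality for $H_{\mathcal{O}}$ yields $\kob_\O(x,y) \geq H_{\mathcal{O}}(x,y)$. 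In particular $\kob_\O$ separates points, and $\kob_\O$-convergence implies standard convergence.

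For the \emph{upper bound}, fix $x_0 \in \O$. Choose a diamond $D \subset \O$ containing $x_0$ (Remark~\ref{rmk_diamonds_basis_of_neighborhoods}(1)); by Proposition~\ref{prop_properties_kobayashi_metric}(1) it suffices to prove $\kob_D(x_0, y) \to 0$ as $y \to x_0$ in $D$. Using that $L^0$ acts irreducibly on $\uu^-$, pick $\ell_1, \ldots, \ell_N \in L^0$ with $(\Ad(\ell_k) v^-)_{k=1}^N$ a basis of $\uu^-$, and write uniquely $\varphi_{\mathsf{std}}^{-1}(y) - \varphi_{\mathsf{std}}^{-1}(x_0) = \sum_k t_k(y)\, \Ad(\ell_k) v^-$ with $t_k$ continuous and vanishing at $y=x_0$. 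The partial sums yield a sequence
\begin{equation*}
x_i(y) := \varphi_{\mathsf{std}}\Big(\varphi_{\mathsf{std}}^{-1}(x_0) + \sum_{k=1}^{i} t_k(y)\, \Ad(\ell_k) v^-\Big), \quad 0 \leq i \leq N,
\end{equation*}
with $x_0(y) = x_0$ and $x_N(y) = y$; a direct calculation (using that $\uu^-$ is abelian and $\ell_{i+1} \in L \subset P$ stabilizes $P$) shows that $x_i(y)$ and $x_{i+1}(y)$ lie on the common photon $\exp(\varphi_{\mathsf{std}}^{-1}(x_i(y)))\, \ell_{i+1} \cdot \Phot_{\mathsf{std}}$, and for $y$ close enough to $x_0$ the whole chain lies in $D$. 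Each Hilbert distance $\ro_D(x_i(y), x_{i+1}(y))$ tends to $0$ because the Euclidean separation of consecutive chain points vanishes with $t_k(y)$, while the endpoints of the intervals $I_{x_i(y), x_{i+1}(y)} \subset \partial D$ stay uniformly away from $x_0$.

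The main technical point is precisely this last step: showing that the Hilbert distances $\ro_D$ along the chain go to $0$ uniformly. It hinges on two ingredients, namely that the intervals $I_{x_i(y), x_{i+1}(y)}$ have bounded length (by compactness of $\overline{D}$ in $\Affstd$) and that their endpoints, lying in $\partial D$, remain at positive distance from $x_0$ (since $x_0$ is interior to $D$). Combining the lower and upper bounds then shows that $\kob_\O$ is a metric and that its topology coincides with the standard topology on $\O$.
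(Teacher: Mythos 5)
Your proof is correct, and the lower bound (comparison with the Hilbert metric $H_{\mathcal{O}}$ of a bounded convex neighborhood of $\overline\O$ in $\Affstd$) is essentially the paper's own argument. Your upper bound, however, takes a genuinely different route. The paper invokes Lemma~\ref{lem_paths_diamonds} to produce an abstract $\nG$-chain from $x_0$ to a nearby point inside a small diamond $D$, and then controls its $\kob$-length by pulling back to the explicit Hilbert metric of a Euclidean ball $B \supset D$, exploiting the closed-form expression for $\mathsf{H}_B$. You instead build the chain by hand: choosing $\ell_1,\dots,\ell_N \in L^0$ so that $(\Ad(\ell_k)v^-)$ is a basis of $\uu^-$, writing $\varphi_{\mathsf{std}}^{-1}(y)-\varphi_{\mathsf{std}}^{-1}(x_0)$ in this basis, and taking the partial sums as chain points; the verification that consecutive partial sums lie on a common photon is a direct computation using that $\exp(X)$ for $X\in\uu^-$ acts by affine translation and $\ell\in L$ acts linearly on $\Affstd$. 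Then you get $\ro_D\to 0$ not from an explicit ball formula but from the soft fact that the interval endpoints, lying in $\partial D$, remain bounded (compactness of $\overline D$ in $\Affstd$) and uniformly away from the interior point $x_0$, while the chain points converge to $x_0$. What the paper's approach buys is reuse of Lemma~\ref{lem_paths_diamonds}, which it needs elsewhere anyway, and a clean quantitative estimate in terms of $\delta$. What your approach buys is self-containment: it sidesteps Lemma~\ref{lem_paths_diamonds} entirely and makes the chain, and the mechanism driving the estimate, completely explicit — at the cost of a chain of length $\dim\uu^-$ rather than $\nG$ (immaterial here) and a little bookkeeping to check the conjugacy of consecutive chain points. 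Both are valid; yours is arguably more elementary.
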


\begin{proof}[Proof]
Let us first show that a properly convex domain $C$ of $\Affstd$ is Kobayashi hyperbolic, where $\Affstd$ is endowed with its affine structure. Since $C$ is a properly convex domain of~$\Affstd$, it has a classical Hilbert metric $\mathsf{H}_C$ (see e.g.\ \cite{goldman2022geometric}).
By the definition of $\mathsf{H}_C$, if~$a,b \in C$ are conjugate then one has 
\begin{equation}\label{eq_hilbert_metric_convex_of_A}
    \mathsf{H}_C(a,b) =\ro_{C}(a,b).
\end{equation}
Now let $x,y \in C$ distinct and $\gamma = (x_0,\dots, x_N) \in \mathcal{C}_{x,y}(\O)$ be a chain from $x$ to $y$. Since~$\mathsf{H}_C$ satisfies the triangle inequality, one has:

$$ \sum_{i=0}^{N-1} \ro_{C}(x_i , x_{i+1}) = \sum_{i=0}^{N-1} \mathsf{H}_C(x_i , x_{i+1}) \geq \mathsf{H}_C(x,y).$$
This is true for all $\gamma \in \mathcal{C}_{x,y}(\O)$, so by taking the infinimum we get 
\begin{equation}\label{eq_comp_kob_hilb}
    \kob_C (x,y) \geq \mathsf{H}_C (x,y) > 0.
\end{equation}
Therefore $\kob_C$ separates the points and $C$ is Kobayashi hyperbolic. 

Now let $\O$ be any domain. We may assume that $\O$ is proper in $\Affstd$ (Remark~\ref{Rmk_contenu_dans_A_std}). Let~$C$ be any properly convex domain of $\Affstd$ containing $\O$ (for instance the convex hull of $\O$ in $\Affstd$). Then, by Proposition~\ref{prop_properties_kobayashi_metric}:
\begin{equation}\label{eq_comp_Hilbert_kob}
    \kob_C(x,y) \leq \kob_{\O}(x,y) \quad \forall x,y \in \O.
\end{equation}
Since $\kob_C$ separates the points, so does $\kob_{\O}$. \\
\indent Let us show that $\kob_{\O}$ generates the standard topology. Let $\mathcal{T}_{\mathsf{std}}$ be the standard topology on $\O$ and $\mathcal{T}$ the topology induced by $\kob_{\O}$. We will prove that~$\mathcal{T}_{\mathsf{std}} = \mathcal{T}$. By Equations \eqref{eq_comp_kob_hilb} and \eqref{eq_comp_Hilbert_kob} and the fact that the Hilbert metric generates the standard topology on properly convex domains, one has $\mathcal{T}_{\mathsf{std}}\subset \mathcal{T}$. To prove the reverse inclusion, one need to show that $\kob_{\O}$ is continuous with respect to the standard topology. By the inequality
\begin{equation*}
    |\kob_{\O}(x_0, y_0 ) - \kob_{\O}(x, y)| \leq \kob_{\O}(x_0, x ) + \kob_{\O}(y_0, y) \quad \forall x_0, y_0, x, y \in \O,
\end{equation*}
one only needs to show that for any $x_0 \in \O$ the map $x \mapsto \kob_{\O}(x_0, x)$ is continuous at $x_0$. For this we see $\Affstd$ as an Euclidean space and  
we denote by $||.|| $ the associated Euclidean norm. Up to dilating at $x_0$, we may assume that the Euclidean ball $B$ of center $x_0$ and of radius $1$ is contained in $\O$. Let $D$ be a diamond containing $x_0$ and contained in $B$, with diameter $\delta \in ]0, 1[$ for the Euclidean norm (Remark~\ref{rmk_diamonds_basis_of_neighborhoods}.(1)). Let $N := \nG$ given by Lemma~\ref{lem_paths_diamonds}. There exists an $N$-chain $(x_0, x_1, \cdots , \ x_{N} = x)$ contained in $D \subset B$. Then, by Proposition~\ref{prop_properties_kobayashi_metric} and Equation~\eqref{eq_hilbert_metric_convex_of_A}, one has
\begin{align*}
  \kob_{\O}(x,x_0) &\leq \kob_B (x_0, x) 
  \leq \sum_{k=0}^{N-1} \kob_B(x_i, x_{i+1}) = \sum_{k=0}^{N-1} \mathsf{H}_B(x_i, x_{i+1}) \\
  &\leq \sum_{k=0}^{N-1} \mathsf{H}_B(x_0, x_{i}) + \mathsf{H}_B(x_0, x_{i+1}) \\
  &= \sum_{k=1}^{N-1}  \log \frac{1 +||x_1-x_0|| }{1 - ||x_i-x_0||} + \log \frac{1+||x_{i+1}-x_0|| }{1 -||x-x_0||} \leq  \sum_{k=1}^{N-1}  2\log \frac{1 + \delta }{1 - \delta} \ \underset{\delta \rightarrow 0}{\longrightarrow} 0
\end{align*}
This proves that $\kob_{\O}(x_0, x ) \rightarrow 0$ as $x \rightarrow x_0$.
$\qed$
\end{proof}

\begin{rmk}\label{rmk_kob_tends_zero}
    The proof of Proposition~\ref{prop_generate_topo_std} gives that for any two sequences $(x_k), (y_k) \in \O^{\mathbb{N}}$ such that $x_k \rightarrow p \in \partial \overline{\O}$ and $y_k \rightarrow q \in \overline{\O}$, if $\kob_{\O}(x_k, y_k) \rightarrow 0$ then one has $p=q$.
\end{rmk}

\subsubsection{Kobayashi length. }\label{section_chains_length} In this section, we recall some definitions and fix some notation. Let $\O$ be a Kobayashi hyperbolic domain, contained in $\Affstd$. For a continuous path~$\gamma:[0,1] \rightarrow \O$, we define the \emph{Kobayashi length} or \emph{$K_{\O}$-length} of $\gamma$ in the the usual way, as
\begin{equation*}
    \leng(\gamma) = \sup \sum_{i=0}^N \kob_{\O}(\gamma(t_i),\gamma(t_{i+1})),
\end{equation*}
where the supremum is taken over all finite subdivisions of $\gamma$. 

Let $x,y \in \O$ be two conjugate points, and let $\Phot_{x,y}$ be the unique photon containing $x$ and $y$. We denote by $[x,y]$ the closure of the only connected component of $\Phot_{x,y} \smallsetminus \{x,y\}$ that is contained in $\O$. By Lemma~\ref{lem_photons_affine}, it is an affine segment in $\Affstd$. This segment can be parametrized by
\begin{equation*}
    [t_1,t_2] \longrightarrow [x,y]; 
    \ t  \longmapsto \isl_g([1:t]),
\end{equation*}
where $g \in G$ is such that $\Phot_{x,y} = g \cdot \Phot_{\mathsf{std}}$ and $x = \isl_g([1:t_1])$ and $y = \isl_g([1:t_2])$. This parametrization depends on the choice of $g \in G$. 

Now, let $x, y \in \O$ be any two points. Any element of $u = (x_0, \cdots, x_{N}) \in \mathcal{C}_{x,y}(\O)$ gives rise to a continuous path $\gamma$ from $x$ to $y$, defined as the concatenation of all the segments $[x_0, x_1], \cdots , [x_{N-1}, x_N]$ in this order, endowed with a parametrization as described above. This path is uniquely defined by $u$ up to reparametrization.
The $K_{\O}$-length $\leng(\gamma)$ of $\gamma$ does not depend on the choice of parametrization of the $[x_i, x_{i+1}]$ for $0 \leq i \leq N$. This defines a unique $K_{\O}$-length for the chain $(x_0, \cdots, x_{N})$. 

In the rest of the paper, we will identify a chain with the unique (up to parametrization) path it defines by the process described above. In particular, this will allow us to consider the $K_{\O}$-length of a chain.

\subsection{Comparison with the Caratheodory metrics}\label{section_carat_metric}

The aim of this section is to prove Proposition~\ref{lem_key_lemma1}. The results we prove are actually stronger; see Proposition~\ref{lem_key_lemma} and Corollary~\ref{cor_kobayashi_geodesic}.

\subsubsection{The Caratheodory metric} \label{sect_def_cara} Let $G$ be a real semisimple Lie group and $\Theta \subset \FS$ be a subset of the simple roots. Let $(V, \rho)$ be a $\Theta$-proximal finite-dimensional real irreducible linear representation of $G$ (see Section~\ref{sect_irred_semi}). Let $\iota: \Fl_{\Theta} \hookrightarrow \mathbb{P}(V)$ and $\iota^*: \Fl_{\Theta}^\opp \hookrightarrow \mathbb{P}(V^*)$ be the two embeddings induced by $(V, \rho)$, see Proposition~\ref{prop_ggkw}. Given $x,y \in \Fl_{\Theta}$ and~$\xi, \eta \in \Fl_{\Theta}^{\opp}$, let
\begin{equation*}
    \nu_x \in \iota(x) \smallsetminus \{ 0 \}; \quad \nu_y \in \iota(y) \smallsetminus \{ 0 \}; \quad f_{\xi} \in \iota^*(\xi) \smallsetminus \{ 0 \}; \quad f_{\eta} \in \iota^*(\eta) \smallsetminus \{ 0 \}.
\end{equation*}
We define the \emph{cross ratio} of $\xi, x, y, \eta$ \emph{relative to} $(V, \rho)$ as follows:
\begin{equation}\label{eq_cross_ratio}
    \left[\xi : x : y: \eta \right]_{\rho} :=   \frac{f_{\xi}(\nu_x)f_{\eta}(\nu_y)}{f_{\xi}(\nu_y)f_{\eta}(\nu_x)}.
\end{equation}
This quantity does not depend on the choice of representatives $\nu_x, \nu_y, f_{\xi}, f_{\eta}$. In \cite{zimmer2018proper}, A. Zimmer introduces the following map $C_{\O}$ associated with a domain $\O \subset \Fl_{\Theta}$: 
\begin{equation*}\label{Cara}
    C_{\O}: 
    \begin{cases}
    \O \times \O &\longrightarrow \quad \quad \mathbb{R}_+ \\
    (x,y ) &\longmapsto \sup_{\xi, \eta \in \O^*} \log \big{|} \left[\xi : x : y :\eta \right]_{\rho} \big{|}.
    \end{cases}
\end{equation*}

By \cite[Thm 5.2 and 9.1]{zimmer2018proper}, the map $C_{\O}$ is an $\Aut(\O)$-invariant metric generating the standard topology, as soon as $\O$ is a proper domain of $\Fl_{\Theta}$. Whenever this is the case, we will say that $C_{\O}$ is the \emph{Caratheodory metric} on $\O$ \emph{induced by $(V, \rho)$}. 

In \cite{zimmer2018proper}, this metric is used to prove that quasi-homogeneous domains are dually convex. In this paper, we need the slightly stronger following result, whose proof relies on the one of \cite[Cor.\ 9.3]{zimmer2018proper}:

\begin{prop}\label{prop_zimmer_dual_convex}
Any proper almost-homogeneous domain of $\Fl_{\Theta}$ is dually convex.
\end{prop}

\begin{proof}
  By definition, we have $\O \subset \O^{**}$ and $\Aut(\O) \leq \Aut(\O^{**})$. Hence the connected component of $\O^{**}$ that contains $\O$ is $\Aut(\O)$-invariant. Since it is also proper, by Lemma~\ref{lem_inclusion_equality_almost_homogeneous}, we get that $\O$ is equal to this connected component. It is thus dually convex (see Remark~\ref{rmk_bidual_dual_convex}.(2)). $\qed$
\end{proof}

\subsubsection{Comparison for Shilov boundaries. }\label{sect_comp_cara_kob} The goal of this section is to prove Proposition~\ref{lem_key_lemma} below. Together with Corollary~\ref{cor_kobayashi_geodesic} in the next section, it will imply Proposition~\ref{lem_key_lemma1}.

\begin{prop}\label{lem_key_lemma} Let $G$ be a simple Lie group of Hermitian type of tube type and let~$(V, \rho)$ be a finite-dimensional real irreducible linear representation of $G$ with highest weight $\chi = N \omega_{\alpha_r}$ ($N \in \mathbb{N}_{>0}$). Let $\O \subset \SB(G)$ be a proper dually convex domain, and let $C_{\O}$ be the Caratheodory metric on $\O$ induced by $(V, \rho)$. Then for any two conjugate points $x,y \in \O$, one has
    \begin{equation*}
        \ro_\O (x,y) = \frac{1}{N}C_{\O}(x,y).
    \end{equation*}
    In particular,  
    \begin{enumerate}
        \item one has $\kob_{\O} \geq N^{-1}C_{\O}$;
        \item the metric $\kob_{\O}$ is a length metric, and given two conjugate points $x,y \in \O$, the $K_{\O}$-length of the $1$-chain $(x,y)$ is equal to $\ro_{\O}(x,y) = K_{\O}(x,y)$. 
    \end{enumerate}
\end{prop}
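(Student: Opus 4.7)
The plan is to express both metrics on photon segments using the intersection polynomials of Section~\ref{sect_inter_poly}, and exploit the splitness statement (Corollary~\ref{cor_inter_split}) to derive a rigid dichotomy that makes the Caratheodory supremum explicitly attainable. By the $G$-equivariance of $\ro_\O$ (Proposition~\ref{prop_properties_kobayashi_metric}) and of $C_\O$, I may assume the photon through $x,y$ is $\Phot_{\mathsf{std}}$; write $x = \isl([1:s])$, $y = \isl([1:t])$ with $s<t$. Since $\overline{\O} \subset \Affstd$, the component $I_{x,y}$ of $\Phot_{\mathsf{std}} \cap \O$ containing $x,y$ is a bounded affine segment (Lemma~\ref{lem_photons_affine}) whose endpoints $p = \isl([1:s_0])$ and $q = \isl([1:t_0])$ are finite, with $s_0 < s < t < t_0$, and $\ro_\O(x,y) = \log\tfrac{(t-s_0)(t_0-s)}{(s-s_0)(t_0-t)}$ is the Hilbert logarithm.

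The heart of the argument is the dichotomy $n(\xi) \in \{0, N\}$ for every $\xi \in \Contlam$. Indeed, by Corollary~\ref{cor_inter_split} the polynomial $\pol_\xi^\rho$ is split over $\mathbb{R}$, and by the remark following Lemma~\ref{lem_photons_affine} the set $\Phot_{\mathsf{std}} \cap \hyp_\xi$ consists of exactly one point. Since $\isl([0:1]) \in \hyp_\xi$ precisely when $n(\xi) < N$, that unique point is either $\isl([0:1])$ — in which case $\pol_\xi^\rho$ has no real roots, hence is constant by splitness, giving $n(\xi) = 0$ — or it is some $\isl([1:\alpha_\xi])$ with $\alpha_\xi \in \mathbb{R}$, in which case $n(\xi) = N$ and $\pol_\xi^\rho(u) = c_\xi(u-\alpha_\xi)^N$. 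Choosing the lifts $\nu_{\isl([1:u])} = e^{u\rho_*(v^-)}e_1$, the Caratheodory cross ratio becomes $[\xi:x:y:\eta]_\rho = \tfrac{\pol_\xi^\rho(s)\pol_\eta^\rho(t)}{\pol_\xi^\rho(t)\pol_\eta^\rho(s)}$.

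For the upper bound $C_\O(x,y) \leq N \ro_\O(x,y)$, I note that for $\xi \in \O^*$ any real root $\alpha_\xi$ of $\pol_\xi^\rho$ lies in $\Phot_{\mathsf{std}} \setminus I_{x,y}$, hence satisfies $\alpha_\xi \leq s_0$ or $\alpha_\xi \geq t_0$, and a straightforward monotonicity in $\alpha$ shows that $\bigl|\pol_\xi^\rho(s)/\pol_\xi^\rho(t)\bigr|$ lies in the interval $\bigl[((s-s_0)/(t-s_0))^N,\,((t_0-s)/(t_0-t))^N\bigr]$ in all three cases (including $n(\xi)=0$, which gives the value $1$, inside the interval). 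Supping and inf-ing independently over $\xi, \eta \in \O^*$ then yields the bound. For the matching lower bound, dual convexity furnishes $\xi_p, \xi_q \in \O^*$ with $p \in \hyp_{\xi_p}$ and $q \in \hyp_{\xi_q}$. Each of $\pol_{\xi_p}^\rho, \pol_{\xi_q}^\rho$ then acquires a real root, so by the dichotomy it is automatically of the form $c(u-\alpha)^N$, with $\alpha = s_0$ and $\alpha = t_0$ respectively; plugging in gives $\bigl|\log[\xi_p:x:y:\xi_q]_\rho\bigr| = N\ro_\O(x,y)$, completing the equality.

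Consequences (1) and (2) are then formal. For (1), the triangle inequality for $C_\O$ applied to an arbitrary chain $(x_0,\dots,x_n)$ from $x$ to $y$ combined with the equality just proved gives $C_\O(x,y) \leq \sum_i C_\O(x_i, x_{i+1}) = N \sum_i \ro_\O(x_i,x_{i+1})$, and infing over chains yields $C_\O \leq N\kob_\O$. For (2), combine the trivial $\kob_\O(x,y) \leq \ro_\O(x,y)$ from the $1$-chain $(x,y)$ with the previous inequality to get $\kob_\O(x,y) = \ro_\O(x,y)$ on conjugate pairs, and deduce the length-metric property from the additivity of the Hilbert metric along $I_{x,y}$. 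The only nonformal ingredient — and the main point to watch in writing this up — is the dichotomy $n(\xi) \in \{0, N\}$; once it is in hand, no density or perturbation argument is needed and dual convexity feeds directly into the extremal pair $(\xi_p, \xi_q)$.
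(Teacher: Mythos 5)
Your proposal is correct and reaches the conclusion by a genuinely different route than the paper. The paper's proof hinges on Lemma~\ref{lem_comp_crossratios}, which it establishes by a limiting argument: one approximates an arbitrary $\xi \in \SB(G)$ by a sequence $\xi_n$ in the dense set $\mathcal{A}_\rho$ where $n(\xi_n) = N$ (Lemma~\ref{lem_A_dense}), uses Corollary~\ref{cor_inter_split} to write each $\pol_{\xi_n}^\rho = (z-t_n)^N$, and passes to the limit. Your replacement is the dichotomy $n(\xi) \in \{0,N\}$ for all $\xi \in \Contlam$, which eliminates the density and approximation step and makes both Caratheodory cross ratios explicitly computable in closed form. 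The dichotomy does hold, but the reason is worth stating precisely since the phrase ``hence is constant by splitness'' elides it: $\pol_\xi^\rho$ is a \emph{real} polynomial, so if $n(\xi) \geq 1$ its unique complex root given by Corollary~\ref{cor_inter_split} must coincide with its own conjugate, hence be real; then $\isl([1:\alpha_\xi]) \in \hyp_\xi \cap \Affstd$, so by Lemma~\ref{lem_photons_affine} the unique intersection point is finite, so $\isl([0:1]) \notin \hyp_\xi$, so $n(\xi) = N$. Two small points to tidy in a write-up. First, the simultaneous reduction to $\overline{\O} \subset \Affstd$ and $\Phot_{x,y} = \Phot_{\mathsf{std}}$ is licit because, after arranging $\overline{\O} \subset \Affstd$, the translation moving $\Phot_{x,y}$ to $\Phot_{\mathsf{std}}$ can be taken in $P^- = U^- \rtimes L$ (see the proof of Lemma~\ref{lem_photons_affine}), which stabilizes $\Affstd$; this keeps the endpoints $s_0, t_0$ finite, which your explicit formulas need (the paper's Lemma~\ref{lem_comp_crossratios} handles points at infinity directly and so avoids this case distinction). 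Second, you should note that $\O^* \subset \Contlam$: if some $\xi \in \O^*$ lay on $\Phot_{\mathsf{std}}$ then by Remark~\ref{rmk_photons_included_in_cone} one would have $\Phot_{\mathsf{std}} \subset \hyp_\xi$, contradicting $x \in \Phot_{\mathsf{std}} \cap \O$. Both are immediate, and the derivation of items (1) and (2) from the equality is fine.
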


Let us fix once and for all some notation for the rest of this section. Let $G$ be an $\HTT$ Lie group. Let $(V, \rho)$ be a finite-dimensional real irreducible linear representation of $G$ with highest weight $\chi = N \omega_{r}$ for some $N \in \mathbb{N}_{> 0}$. We let $\iota: \SB(G) \hookrightarrow \mathbb{P}(V)$ and~$\iota^*: \SB(G) \hookrightarrow \mathbb{P}(V^*)$ be the two embeddings induced by $\rho$ from Proposition~\ref{prop_ggkw}, and we fix a vector $e_1 \in V^{\chi} \smallsetminus \{0\}$.

The key point of the proof of Proposition~\ref{lem_key_lemma} is the comparison of two cross ratios: the one defined in Equation~\eqref{eq_cross_ratio} and the one given in the definition of $\ro_{\O}$. This is done in Lemma~\ref{lem_comp_crossratios} below. The argument uses the intersection polynomials introduced in Section~\ref{sect_inter_poly}. We have seen that the real roots of an intersection polynomial $\mathsf{Q}_{\xi}^\rho$ for~$\xi \in \Contlam$ are geometrically described by the intersection points of $\iota(\Phot_{\mathsf{std}})$ with $\iota^*(\xi)$, so that they have at most one real root by Lemma~\ref{lem_photons_affine}. But our argument will rely on the fact that they have only one \emph{complex} root, so that they are split (see Corollary~\ref{cor_inter_split}). The complex roots of $\mathsf{Q}_\xi^\rho$ will be geometrically described as the intersection points of the complexification of $\iota^*(\xi)$ and a set, denoted $\mathcal{P}_N$, that plays the role of the complexification of $\iota(\Phot_{\mathsf{std}})$. We describe this intersection in Lemma~\ref{lem_inter_singleton} below and actually prove that it is still a singleton. To this end, we work in a complexification of the representation $(V, \rho)$.

Let $V^{\Cf} := V \otimes \Cf$ be the complexification of $V$. Recall the notation of Section \ref{sect_cross_ratio}.

For any $\xi \in \SB(G)$ and any lift $f \in V^* \smallsetminus \{ 0 \}$ of $\iota^* (\xi)$, the map $f$ extends uniquely to a linear form $f^{\Cf} : V^\Cf \rightarrow \Cf$. We denote by $\iota^*(\xi)^\Cf$ the class $[f^\Cf]_c$ of $f^{\Cf}$ in $\mathbb{P}_{c}\left((V^\Cf)^*\right)$. This definition does not depend on the choice of the lift $f$ of $\iota^*(\xi)$ in $V^* \smallsetminus \{0\}$. As for the real case, we identify $\mathbb{P}_c\left((V^\Cf)^*\right)$ with the set of projective hyperplanes of~$\mathbb{P}_c(V^{\Cf})$. 

Similarly, for any $g \in G$, the operator $\rho (g)$ uniquely extends to an automorphism of~$V^{\Cf}$. We will still denote by $\rho(g)$ this extension.  

The map $\plong : \SL_2(\mathbb{R}) \rightarrow G $ induces a group homomorphism $\SL_2(\mathbb{R}) \rightarrow G \rightarrow \GL(V)$ with kernel included in $\{\pm \operatorname{id}\}$. It extends to a homomorphism $\plong_{\mathbb{C}}: \SL_2(\Cf) \rightarrow \GL(V^{\mathbb{C}})$ (with kernel $\{\pm \operatorname{id}\}$), such that

$$\plong_{\mathbb{C}}\left(e^{\E} \right) = e^{\rho_*(v^+)}, \quad \plong_{\mathbb{C}}\left(e^{\F} \right) = e^{\rho_*(v^-)}, \quad \plong_{\mathbb{C}}\left(e^{\He} \right) = e^{\rho_*(h_r)}.$$ 

The stabilizer of $e_1$ in $\SL_2(\mathbb{C})$ is the standard Borel subgroup $P_1^{\Cf}$ of $\SL_2(\mathbb{C})$. Hence the map $\plong_\Cf$ induces a $\plong_\Cf$-equivariant embedding $\FC : \mathbb{P}_c(\Cf^2) \hookrightarrow \mathbb{P}_c(V^{\Cf})$. The image of~$\FC$ is denoted by $\Phoc$. Explicitly, the set $\Phoc$ is the closure in $\mathbb{P}_c(V^{\Cf})$ of
\begin{equation*}
   \big{\{}\FC\left([1:z]_c\right) = e^{z \rho_*(v^-)} \cdot [e_1]_c \ \ \big{\vert}  \ z \in \mathbb{C}\big{\}}.
\end{equation*}

\begin{lem}\label{lem_inter_singleton} \begin{enumerate}
    \item One has $\rho(U^+) \cdot \Phoc = \Phoc$.
    \item Let $\xi \in \Contlam$. Then the set $\Phoc \cap \iota^*(\xi)^{\Cf}$ is a singleton.
\end{enumerate}
\end{lem}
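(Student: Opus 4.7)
For Part~(1), the plan is to mimic the proof of Lemma~\ref{lem_action_U_on_A} in the complex setting. Given $Y \in \uu^+$, I would decompose $Y = \lambda v^+ + \sum_{\beta \in \Sigma^+_{\{\alpha_r\}} \smallsetminus \{\alpha_r\}} Y_\beta$ with $\lambda \in \mathbb{R}$ and $Y_\beta \in \g_\beta$; since $\uu^+$ is abelian, $\exp(Y)$ factors as a commuting product. For each $Y_\beta$, Lemma~\ref{lem_auxiliary_lemma} supplies $Z_\beta \in \operatorname{Lie}(P)$ with $[v^-, Z_\beta] = 0$, giving
\[
\rho(\exp(Y_\beta)) \, e^{z\rho_*(v^-)} e_1 = e^{z\rho_*(v^-)} \, e^{z\rho_*(Z_\beta)} \, \rho(\exp(Y_\beta)) e_1 \qquad \forall z \in \Cf.
\]
Two weight-space observations then collapse the right-hand side: first, $\uu^+$ annihilates the highest weight vector $e_1$ (so $\rho(\exp(Y_\beta)) e_1 = e_1$); second, by the analysis in Lemma~\ref{lem_auxiliary_lemma}, $Z_\beta$ lies in $\g_{\e_i - \e_r} \oplus \g_{2\e_i}$ for some $i < r$, and neither $\chi + \e_i - \e_r$ nor $\chi + 2\e_i$ is a weight of $V$ (as $\chi = N\omega_r$ is maximal), so $\rho_*(Z_\beta) e_1 = 0$. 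Hence $\rho(\exp(Y_\beta))$ fixes $\FC([1:z]_c)$ for every $z$. The remaining factor $\rho(\exp(\lambda v^+)) = \plong_\Cf(e^{\lambda \E})$ acts on $\Phoc$ by the Möbius transformation $\FC([1:z]_c) \mapsto \FC([1:z/(1+\lambda z)]_c)$ via $\plong_\Cf$-equivariance of $\FC$. Combining yields $\rho(U^+) \cdot \Phoc \subseteq \Phoc$, with equality following from $U^+$ being a group.

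For Part~(2), the key is to pass through the identification of the $\plong_\Cf(\SL_2(\Cf))$-invariant subspace $V_1 := \operatorname{Span}_\Cf(e_1, \rho_*(v^-)e_1, \dots, \rho_*(v^-)^N e_1) \subseteq V^\Cf$ (which is irreducible of dimension $N+1$) with $\operatorname{Sym}^N(\Cf^2)$, sending $e_1$ to the highest weight vector $u^N$ and $\rho_*(v^-)^k e_1$ to a scalar multiple of $u^{N-k} v^k$. Under this identification, $\Phoc$ corresponds precisely to the Veronese image $\{[w^N]_c : [w]_c \in \mathbb{P}_c(\Cf^2)\}$, with $\FC([\alpha:\beta]_c)$ corresponding to $[(\alpha u - \beta v)^N]_c$. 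For a lift $f \in V^* \smallsetminus \{0\}$ of $\iota^*(\xi)$, the intersection $\Phoc \cap \iota^*(\xi)^\Cf$ is then in bijection (via $\FC$) with the zero set in $\mathbb{P}_c(\Cf^2)$ of the homogeneous polynomial $F_\xi(w) := f(w^N)$ of degree $N$; this polynomial is nonzero precisely when $\xi \in \Contlam$. The singleton claim thus amounts to showing that $F_\xi$ is of the form $c \cdot \ell(w)^N$ for some linear form $\ell$ on $\Cf^2$, i.e., that $\iota^*(\xi)^\Cf \cap V_1^\Cf$ is an osculating hyperplane to $\Phoc$.

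To establish this, I would first verify the base case $\xi = P^\opp$ by a direct weight-space computation: $\iota^*(P^\opp) = \mathbb{P}(V^{<\chi})$ restricts on $V_1^\Cf$ to $\operatorname{Span}_\Cf(\rho_*(v^-)e_1, \dots, \rho_*(v^-)^N e_1)$, the osculating hyperplane to the Veronese at $\FC([0:1]_c) = [\rho_*(v^-)^N e_1]_c$, yielding the singleton $\{\FC([0:1]_c)\}$. To transfer this to general $\xi \in \Contlam$, I would exploit (i) the transitivity of $\plong(\SL_2(\Rf))$ on $\Phot_{\mathsf{std}}$, which moves the unique real intersection point $\Phot_{\mathsf{std}} \cap \hyp_\xi$ to any chosen point, and (ii) the $\rho(U^+)$-action from Part~(1); using the relation $\rho(\exp(-Y)) e^{t\rho_*(v^-)} e_1 = (1+\lambda t)^N e^{t/(1+\lambda t) \rho_*(v^-)} e_1$ (visible from the $\operatorname{Sym}^N$-identification), one sees that the intersection polynomial transforms covariantly as $\pol_{g\xi}^\rho(t) = \kappa (1+\lambda t)^N \pol_\xi^\rho(t/(1+\lambda t))$ for suitable $\kappa, \lambda \in \Rf$, a transformation that preserves the ``pure $N$-th power'' form of $F_\xi$. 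The main obstacle I anticipate is ensuring that the combined $\operatorname{Stab}_G(\Phot_{\mathsf{std}})$-orbit of $P^\opp$ (with $\operatorname{Stab}_G(\Phot_{\mathsf{std}}) = \plong(\SL_2(\Rf)) \times \operatorname{Cent}_G(\plong(\SL_2(\Rf)))$ by Lemma~\ref{lem_parametrization}) is rich enough to reach all of $\Contlam$, for which I plan to use a closedness/semicontinuity argument: the set of $\xi \in \Contlam$ with singleton intersection is closed in $\Contlam$, so it suffices to reach a dense subset.
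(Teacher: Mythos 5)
Your Part~(1) follows the same route as the paper: decompose $Y$, apply Lemma~\ref{lem_auxiliary_lemma} to each $Y_\beta$, and use the $\plong_\Cf$-equivariance of $\FC$ for the $\lambda v^+$-part. Your explicit weight-space argument --- that $\rho(\exp(Y_\beta))e_1 = e_1$ since $\uu^+$ annihilates the highest weight vector, and that $\rho_*(Z_\beta)e_1 = 0$ since $Z_\beta$ lies in the positive root spaces $\g_{\e_i-\e_r}\oplus\g_{2\e_i}$ --- is a slightly more detailed version of the paper's computation, and it is correct.

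Your Part~(2) takes a genuinely different route. The paper proves the base case $\xi = P^\opp$ and then, for general $\xi\in\Contlam$ transverse to $P$, writes $\xi = g^{-1}P^\opp$ with $g = h\exp(Y)$, $h\in P^-$, $Y\in\uu^+$, and exploits that $\rho(h^{-1})$ preserves $\iota^*(P^\opp)^\Cf$ while $\rho(\exp Y)$ preserves $\Phoc$ (Part~(1)), yielding directly $\Phoc\cap\iota^*(\xi)^\Cf = \rho(\exp(-Y))\bigl(\Phoc\cap\iota^*(P^\opp)^\Cf\bigr)$, a singleton. This needs no density or semicontinuity; the only ``choice'' it requires is a short reduction to the case where $\xi$ is transverse to $P$, arranged by replacing $\xi$ with $\exp(-tv^-)\xi$ for a suitable $t$, using that $\exp(tv^-)$ preserves $\Phoc$ and that $\Phot_{\mathsf{std}}\not\subset\hyp_\xi$. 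Your route instead identifies $V_1^\Cf$ with $\operatorname{Sym}^N(\Cf^2)$ so that $\Phoc$ becomes the rational normal curve and the singleton claim becomes ``$F_\xi$ is a pure $N$-th power''; this is a nice conceptual reformulation (it is also exactly what Corollary~\ref{cor_inter_split} extracts from the statement), but your transfer step is incomplete: you acknowledge, but do not close, the gap of showing that the orbit of $P^\opp$ under $\operatorname{Stab}_G(\Phot_{\mathsf{std}})$ (or under the elements preserving $\Phoc$) is dense in $\Contlam$. That gap is in fact fillable cheaply --- $U^+\cdot P^\opp$ is the big Bruhat cell of $\Fl_\Theta^\opp$, hence open and dense, so $U^+$ alone (which preserves $\Phoc$ by your Part~(1)) already gives the needed density, and the closedness of the locus ``$[F_\xi]$ lies on the Veronese cone'' is a standard semicontinuity --- but as written your argument leaves this unverified, whereas the paper's one-line conjugation $\Phoc\cap\iota^*(\xi)^\Cf = \rho(\exp(-Y))\bigl(\Phoc\cap\iota^*(P^\opp)^\Cf\bigr)$ sidesteps the issue entirely.
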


\begin{proof}[Proof]
Let us first prove (1). Note that this is not just a consequence of Lemma~\ref{lem_stab_u_std}, although it has a similar proof. As in the proof of Lemma~\ref{lem_action_U_on_A}, for all $Y \in \uu^+$, using Lemma~\ref{lem_auxiliary_lemma}, we can find $\lambda \in \mathbb{R}$ such that for all $z \in \mathbb{C} \smallsetminus \{-\lambda^{-1}\}$ (with $-\lambda^{-1} = \infty$ if~$\lambda = 0$), one has 
\begin{equation*}
        \rho\left(\exp(Y)\right) \cdot \FC\left([1:z]_c\right) 
        = \FC\left(\left[1:\frac{z}{1 + z \lambda}\right]_c \right) \in \Phoc.
\end{equation*}
Hence $ \rho\left(\exp(Y)\right) \cdot \FC\left([1:z]_c\right) \in \Phoc$ for all $z \in \mathbb{C} \smallsetminus \{ -\lambda^{-1}\}$. Taking the closure, we get $\rho\left(\exp(Y)\right) \cdot \Phoc \subset \Phoc$. The converse inclusion also holds by the same argument applied to $-Y$ instead of $Y$. Therefore $\rho\left(\exp(Y)\right) \cdot \Phoc = \Phoc$.

Now let us prove (2).

\noindent \textit{Step 1.} Let us first prove (2) for $\xi = P^- \in \Contlam$. Let $f_0 \in V^* \smallsetminus \{ 0 \}$ be any lift of $\iota^*(P^\opp)$ and let $x \in \iota^*(P^-)^{\Cf} \cap \Phoc$.

There exists $(z_n) \in \Cf^{\mathbb{N}}$ be such that $\FC \left( e^{z_n \F} \cdot [1:0]_c\right) \rightarrow x$ as $n \rightarrow + \infty$. Since
$$f_0^{\Cf}\big{(}e^{z \rho_*(v^-)}e_1\big{)} =  f_0(e_1) + z f_0\left(\rho_*(v^-)e_1\right) + \cdots + \frac{z^N}{N!} f_0\left(\rho_*(v^-)^N e_1\right) = f_0(e_1) \ne 0$$
for all $z \in \Cf$, we must have $|z_n| \rightarrow + \infty$. Then $e^{z_n \F} \cdot[1:0]_c  \rightarrow [0:1]_c$. Hence $x$ has to be equal to $\FC \left([0:1]_c\right)$.

\noindent \textit{Step 2.} Now let $\xi \in \Contlam$ be any point, and let $g \in G$ be such that $\xi = g^{-1} P^\opp$. Since~$\Phot_{\mathsf{std}} \not\subset \hyp_{\xi}$, there exists $t \in \mathbb{R}$ such that $g^{-1}\exp(tv^-)P \in \Affstd$. Since $\exp(tv^-)$ preserves $P^\opp$, one has $g^{-1}\exp(tv^-) P^\opp = \xi$. Hence, up to replacing $g$ with $g^{-1}\exp(tv^-)$, we may assume that $g P \in \Affstd$. Then, by~\eqref{eq_A_egal_exp}, we can write $g = h \exp(Y)$ with~$Y \in \uu^+$ and~$h \in P^-$. By point (1) of the lemma, one has $\rho\left(\exp(Y)\right) \cdot \Phoc = \Phoc$. On the other hand, since $\rho(h^{-1})$ preserves $\iota^*(P^-)$, its $\Cf$-extension preserves $\iota^*(P^-)^\Cf$. Thus~$\rho(h^{-1}) \cdot \iota^*(P^-)^{\Cf} = \iota^*(P^-)^{\Cf}$. This gives
\begin{align*}
     \Phoc\cap  \iota^*(\xi)^\Cf 
     &= \Phoc \cap  \left(\rho\left(\exp(-Y)\right) \rho\left(h^{-1}\right)\cdot\iota^*(P^\opp)^{\Cf} \right)\\
     &=  \rho\left(\exp(-Y)\right)\left(\rho(\exp(Y))\Phoc \cap   \iota^*(P^\opp)^{\Cf}\right) \\
     & = \rho\left(\exp(-Y)\right)\left(\Phoc \cap   \iota^*(P^\opp)^{\Cf}\right),
\end{align*}
the last equality holding by point (1). Then, by Step 2, the set~$\Phoc\cap  \iota^*(\xi)^{\Cf}$ is a singleton.
$\qed$
\end{proof}
Lemma~\ref{lem_inter_singleton}.(2) above admits the following corollary:

\begin{cor}\label{cor_inter_split}
Let $\xi \in \Contlam$. Then the intersection polynomial $\pol_{\xi}^\rho$ of $\xi$ is split. If moreover $\Phot_{\mathsf{std}} \cap \hyp_{\xi} \subset \Affstd$, then the unique complex root of $\pol_{\xi}^\rho$ is equal to the unique~$t \in \mathbb{R}$ satisfying $\isl([1:t]) \in \hyp_{\xi}$.
\end{cor}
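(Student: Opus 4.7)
The plan is to read off the complex roots of $\pol_{\xi}^\rho$ from the intersection $\Phoc \cap \iota^*(\xi)^\Cf$, which Lemma~\ref{lem_inter_singleton}.(2) controls. First I would observe that, extending the lift $f \in V^* \smallsetminus \{0\}$ of $\iota^*(\xi)$ chosen in the definition~\eqref{eq_inter_pol} of $\pol_\xi^\rho$ to a linear form $f^\Cf \in (V^\Cf)^*$, the polynomial identity
\begin{equation*}
    \pol_\xi^\rho(z) = f^\Cf\bigl(e^{z \rho_*(v^-)} \cdot e_1\bigr)
\end{equation*}
extends from $\Rf$ to all $z \in \Cf$ by polynomial identity. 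Since $\FC([1:z]_c) = [e^{z\rho_*(v^-)} \cdot e_1]_c$, this identifies a number $z \in \Cf$ as a root of $\pol_\xi^\rho$ precisely when $\FC([1:z]_c)$ lies in the projective hyperplane $\iota^*(\xi)^\Cf$, that is, in $\Phoc \cap \iota^*(\xi)^\Cf$. Injectivity of $\FC$ then puts the distinct complex roots of $\pol_\xi^\rho$ in bijection with (a subset of) this intersection.

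By Lemma~\ref{lem_inter_singleton}.(2), the set $\Phoc \cap \iota^*(\xi)^\Cf$ is a singleton, so $\pol_\xi^\rho$ has at most one distinct complex root. If $n(\xi) = 0$, then $\pol_\xi^\rho$ is a nonzero constant and is trivially split; in this case the real-root geometry forces $\Phot_{\mathsf{std}} \cap \hyp_\xi = \{\isl([0:1])\}$ by Lemma~\ref{lem_photons_affine} and the discussion following it, so the ``moreover" hypothesis is vacuous. If instead $n(\xi) \geq 1$, then all complex roots must coincide with a single $z_0 \in \Cf$, giving a factorization $\pol_\xi^\rho(t) = c(t - z_0)^{n(\xi)}$ with $c \in \Rf \smallsetminus\{0\}$. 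Since $\pol_\xi^\rho$ has real coefficients, complex conjugation sends roots to roots, so $\overline{z_0} = z_0$, i.e.\ $z_0 \in \Rf$. Hence $\pol_\xi^\rho$ splits over $\Rf$.

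For the second assertion, under the hypothesis $\Phot_{\mathsf{std}} \cap \hyp_\xi \subset \Affstd$, the remark following Lemma~\ref{lem_photons_affine} gives a unique $t_0 \in \Rf$ with $\isl([1:t_0]) \in \hyp_\xi$, and by the correspondence recalled in Section~\ref{sect_inter_poly} this $t_0$ is the unique real root of $\pol_\xi^\rho$. The unique complex root $z_0$, already shown to be real, must therefore equal $t_0$.

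I do not anticipate a genuine obstacle: Lemma~\ref{lem_inter_singleton}.(2) has done the geometric work, and the argument above is a bookkeeping step that translates this singleton property into an algebraic statement about $\pol_\xi^\rho$. The only mildly delicate point is verifying that a polynomial with real coefficients having a single complex root with multiplicity forces that root to be real, which is immediate from invariance of the root set under complex conjugation.
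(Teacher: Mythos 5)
Your proof is correct and takes essentially the same route as the paper: both extend the lift $f$ to a complex linear form, identify the complex roots of $\pol_\xi^\rho$ with points of $\Phoc \cap \iota^*(\xi)^\Cf$ via the injectivity of $\FC$, and invoke Lemma~\ref{lem_inter_singleton}.(2) to conclude there is a single complex root. You are slightly more careful than the paper in handling the degenerate case $n(\xi)=0$ and in making explicit that real coefficients force the unique root to be real, but these are small elaborations of the paper's argument, not a different strategy.
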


\begin{proof}[Proof] With the notation of Section~\ref{sect_inter_poly}, let $f \in V^*$ be the unique lift of $\xi$ such that~$f(\rho_*(v^-)^{n(\xi)} e_1) = n(\xi)!$. For all $z \in \mathbb{C}$, one has:
\begin{equation*}
    \mathsf{Q}_{\xi}^\rho(z) = f(e_1) + z f(\rho_*(v^-)e_1) + \cdots + z^{n(\xi)} = f^\Cf\big{(}e^{z \rho_*(v^-)} \cdot e_1 \big{)} 
\end{equation*}
Hence one has:
\begin{equation}\label{eq_equiv_roots}
    \mathsf{Q}_{\xi}^\rho(z) = 0  \ \Longleftrightarrow \ f^\Cf\big{(}e^{z \rho_*(v^-)} \cdot e_1 \big{)} = 0  \ \Longleftrightarrow \  \FC\left([1:z]_c\right) \in \Phoc \cap \iota^*(\xi)^\Cf.
\end{equation}
By Lemma~\ref{lem_inter_singleton}.(2), the intersection $\Phoc \cap \iota^*(\xi)^\Cf$ is a singleton, so the injectivity of $\FC$ and the equivalence of~\eqref{eq_equiv_roots} above give that $\mathsf{Q}_{\xi}^\rho$ has only one complex root. Therefore, the polynomial $\mathsf{Q}_{\xi}^\rho$ is split.

If moreover $\Phot_{\mathsf{std}} \cap \hyp_{\xi} \subset \Affstd$, then there exists $t \in \mathbb{R}$ such that $\isl([1:t]) \in \hyp_{\xi}$. Then~$\pol_{\xi}^\rho (t) = 0$, so $t \in \mathbb{R}$ is the unique complex root of $\pol_{\xi}^\rho$. $\qed$
\end{proof}

Recall that, by Lemma~\ref{lem_photons_affine}, given a point $\xi \in  \SB(G) \smallsetminus \Phot_{\mathsf{std}} = \Contlam$, the set~$\hyp_{\xi} \cap \Phot_{\mathsf{std}}$ is a singleton. We can then define a $G$-equivariant projection $\pstd: \SB(G) \rightarrow \Phot_{\mathsf{std}}$ by setting $\pstd(\xi) := \xi$ if $\xi \in \Phot_{\mathsf{std}}$, and $\pstd(\xi) := p$ where $\{p\} = \hyp_{\xi} \cap \Phot_{\mathsf{std}}$ if $\xi \in \Contlam$. Using Lemma~\ref{lem_inter_singleton} and Corollary~\ref{cor_inter_split}, we now establish a comparison between two cross ratios, involving the projection $\pstd$: 
\begin{lem}\label{lem_comp_crossratios}
Let $\xi_1, \xi_2 \in \SB(G)$, and for $i \in \{1,2\}$, let $b_i \in \mathbb{P}(\mathbb{R}^2)$ be such \mbox{  } that~$\pstd(\xi_i) = \isl(b_i)$. Let $a_1, a_2 \in \mathbb{P}(\mathbb{R}^2)$ be such that $b_1, a_1, a_2, b_2 \in \mathbb{P}(\mathbb{R}^2)$ are aligned in this order. Then one has
    \begin{equation*}
       \log \left|[\xi_1 : \isl(a_1) : \isl(a_2) : \xi_2]_{\rho} \right| = N \log (b_1: a_1: a_2: b_2).
    \end{equation*}
\end{lem}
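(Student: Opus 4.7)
The plan is to reduce to a generic configuration via density and continuity, invoke Corollary~\ref{cor_inter_split} to obtain an explicit formula for the relevant linear forms, and then compute both cross ratios by hand and compare term-by-term.

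\textbf{Reduction.} Both sides of the claimed identity depend continuously on $(\xi_1,\xi_2)$ on the open set where $\isl(a_j) \notin \hyp_{\xi_i}$ and $\pstd(\xi_i) \neq \isl(a_j)$ for all $i,j \in \{1,2\}$. Since $\mathcal{A}_\rho$ is open and dense in $\SB(G)$ by Lemma~\ref{lem_A_dense}, it suffices to prove the statement when $\xi_1,\xi_2 \in \mathcal{A}_\rho$. Using the $\plong(\SL_2(\mathbb{R}))$-action, which preserves $\Phot_{\mathsf{std}}$ and acts $\rho$-equivariantly so that both cross ratios are preserved, one may further arrange that $b_1, b_2 \neq [0:1]$; writing $b_i = [1:s_i]$ and $a_j = [1:t_j]$ with $s_i, t_j \in \mathbb{R}$ then encodes the alignment hypothesis as an ordering of four real numbers.

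\textbf{Explicit intersection polynomials.} For $\xi_i \in \mathcal{A}_\rho$, the polynomial $\pol_{\xi_i}^\rho$ is monic of degree $n(\xi_i) = N$. Corollary~\ref{cor_inter_split} asserts that it is split with a unique complex root; since Lemma~\ref{lem_photons_affine} already produces a real root corresponding to the unique intersection $\pstd(\xi_i) \in \Phot_{\mathsf{std}} \cap \hyp_{\xi_i}$, this unique root must be the real number $s_i$ with $\isl([1:s_i]) = \pstd(\xi_i)$. Hence $\pol_{\xi_i}^\rho(t) = (t - s_i)^N$. Unwinding the definition~(\ref{eq_inter_pol}), this means that the normalized lift $f_{\xi_i} \in V^* \setminus \{0\}$ of $\iota^*(\xi_i)$, determined by $f_{\xi_i}(\rho_*(v^-)^N e_1) = N!$, satisfies
\[
f_{\xi_i}\bigl(e^{t\rho_*(v^-)} e_1\bigr) = (t - s_i)^N \quad \forall\, t \in \mathbb{R}.
\]

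\textbf{Comparison of the two cross ratios.} Taking $\nu_{\isl(a_j)} := e^{t_j \rho_*(v^-)} e_1$, which represents $\iota(\isl(a_j))$ by~(\ref{forme_de_la_ddr1_2}), the previous step yields $f_{\xi_i}(\nu_{\isl(a_j)}) = (t_j - s_i)^N$. Substituting into the definition~(\ref{eq_cross_ratio}) of $[\cdot]_\rho$ produces
\[
[\xi_1 : \isl(a_1) : \isl(a_2) : \xi_2]_\rho \;=\; \left(\frac{(t_1 - s_1)(t_2 - s_2)}{(t_2 - s_1)(t_1 - s_2)}\right)^N,
\]
while the $\SL_2(\mathbb{R})$-invariant classical cross ratio, normalized by $([1:0]:[1:1]:[1:t]:[0:1]) = t$, evaluates to $(b_1:a_1:a_2:b_2) = \frac{(t_2 - s_1)(s_2 - t_1)}{(t_1 - s_1)(s_2 - t_2)}$. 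Taking absolute values and logarithms of both expressions yields the claimed identity. The technical heart of the argument is the splitness assertion of Corollary~\ref{cor_inter_split}, itself a consequence of Lemma~\ref{lem_inter_singleton} on the singleton intersection of $\Phoc$ with complexified hyperplanes; this is precisely what causes the Zimmer cross ratio to collapse to an $N$-th power of a real cross ratio on $\Phot_{\mathsf{std}}$, after which the comparison is elementary algebra.
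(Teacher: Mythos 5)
Your proof is correct in substance and follows essentially the same route as the paper's: reduce via density of $\mathcal{A}_\rho$ to the generic case where $n(\xi_i)=N$, use Corollary~\ref{cor_inter_split} to deduce $\pol_{\xi_i}^\rho(t)=(t-s_i)^N$, and then compute both cross ratios explicitly. The only presentational difference is that you do the density reduction once at the outset and compute in closed form, whereas the paper approximates $\xi_i$ by an explicit sequence $\xi_{i,n}\in\mathcal{A}_\rho$ and passes to the limit; these are the same argument. Two small remarks. First, the extra $\plong(\SL_2(\mathbb{R}))$-normalization to arrange $b_i\neq[0:1]$ is redundant: for $\xi\in\Contlam$ one has $\xi\in\mathcal{A}_\rho$ if and only if $[\rho_*(v^-)^Ne_1]=\iota(\isl([0:1]))\notin\iota^*(\xi)$, i.e.\ if and only if $\pstd(\xi)\neq\isl([0:1])$, so this is already guaranteed by the density reduction (whereas applying an $\SL_2(\mathbb{R})$-translate after that reduction could in principle move you out of $\mathcal{A}_\rho$). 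Second, be careful with the last step: your two explicit formulas actually give $[\xi_1:\isl(a_1):\isl(a_2):\xi_2]_\rho=(b_1:a_1:a_2:b_2)^{-N}$, so that $\log\bigl|[\xi_1:\isl(a_1):\isl(a_2):\xi_2]_\rho\bigr|=-N\log(b_1:a_1:a_2:b_2)$, not $+N\log(\cdot)$. The same sign discrepancy is already present in the paper's own computation (compare its first displayed equality in the proof with the normalization $([1:0]:[1:1]:[1:t]:[0:1])=t$), so this appears to be a typo in the statement of the lemma rather than an error in your argument; it is harmless downstream because Proposition~\ref{lem_key_lemma} only uses the identity after taking absolute values of both sides. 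Still, you should not write that the computation ``yields the claimed identity'' without flagging that the sign only matches up to this convention.
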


Given two points $x,y \in \Phot_{\mathsf{std}}$, 
Lemma~\ref{lem_comp_crossratios} expresses that the cross ratio $[\xi_1,x,y,\xi_2]_{\rho}$, where $\xi_1, \xi_2 \in \SB(G)$, depends only on the projections of $\xi_1$ and $\xi_2$ to $\Phot_{\mathsf{std}}$. 

\begin{rmk}After this paper was completed, we learnt that Beyrer--Guichard--Labourie--Pozzetti--Wienhard \cite{beyrer2024positivity} prove a similar property for flag manifolds with a $\Theta$-positive structure. Other versions are established in \cite[Lem.\ 10.4]{zimmer2015rigidity2} for Grassmannians $\operatorname{Gr}_p (\mathbb{R}^n)$ and in \cite[Lem.\ 2.9]{lee2017collar} for the full flag manifold of $\SL(n, \mathbb{R})$.
\end{rmk}

\begin{proof}[Proof of Lemma~\ref{lem_comp_crossratios}] Let us set $x := P = \isl(a_1)$, $y := \isl(a_2)$, $p_1 = \isl(b_1)$ and~$p_2 = \isl(b_2)$. We may assume that $a_1 = [1:0]$ and $a_2= [1:1]$, and that there exist $t_1, t_2 \in \mathbb{R}$ such that $b_i = [1: t_i]$ for $i \in \{ 1, 2\}$, and that $t_1 < 0 <1< t_2$, the lemma then following by a continuity argument. Then the four distinct points~$p_1, x, y, p_2$ are aligned on $\Phot_{\mathsf{std}} \cap \Affstd$ in this order. Note that $\iota(x) = [e_1]$. Let $e_2 :=e_1 + \rho_*(v^-)e_1 + \cdots +  (1/N!) \rho_*(v^-)^N e_1$. Then, by~\eqref{forme_de_la_ddr1_2}, one has $\iota(y) = [e_2]$.

Recall the open set $\mathcal{A}_{\rho}$ of~\eqref{eq_A_rho}. By Lemma~\ref{lem_A_dense}, the set $\mathcal{A}_{\rho}$ is a dense open subset of~$\SB(G)$. Hence for $i \in \{1, 2 \}$, we can find a sequence $(\xi_{i,n}) \in \mathcal{A}_{\rho}^{\mathbb{N}}$ such that $\xi_{i,n} \rightarrow \xi_i$. For all $n \in \mathbb{N}$, let $p_{i,n} := \pstd(\xi_{i,n})$. Then, by continuity of $\pstd$, one has $p_{i,n} \rightarrow p_i \in \Affstd$, so up to extracting we may assume that $p_{i,n} \in \Affstd$ for all $n \in \mathbb{N}$.

Let $f_i \in V^* \smallsetminus \{0\}$ (resp.\ $f_{i,n} \in V^* \smallsetminus \{0\}$) be a lift of $\iota^*(\xi_i)$ (resp.\  $\iota^*(\xi_{i,n})$). For every~$n \in \mathbb{N}$ we choose $f_{i,n}$ such that $f_{i,n}(\rho_*(v^-)^N e_1) = N!$. For any $n \in \mathbb{N}$ and $i \in \{ 1, 2 \}$, the intersection polynomial
\begin{equation}\label{eq_def_Q}
    \pol_{i,n}(z) := \pol_{\xi_{i,n}}^\rho(z) = 
    (f_{i,n})^{\Cf}(e^{z \rho_*(v^-)}e_1) = f_{i,n}(e_1) + z f_{i,n}(\rho_*(v^-)e_1) + \cdots + z^N 
\end{equation}
is nonzero, so there exists $t \in \mathbb{R}$ such that $\pol_{i,n}(t) = f_{i,n}(e^{t \rho_*v^-} e_1) \ne 0$. This implies in particular that $\Phot_{\mathsf{std}} \not\subset \hyp_{\xi_{i,n}}$, i.e.\ $\xi \in \Contlam$. By Corollary~\ref{cor_inter_split}, the polynomial $\pol_{i,n}$ is thus split. But we also know that $\Phot_{\mathsf{std}} \cap \hyp_{\xi_{i,n}} = \{p_{i,n}\}$ is contained in $\Affstd$. Hence by the ``moreover" part of Corollary~\ref{cor_inter_split}, and since $n(\xi_{i,n}) = N$, the polynomial $\pol_{i,n}$ can be written $\pol_{i,n} (z)= (z-t_{i,n})^N$, with $t_{i,n} \in \mathbb{R}$ satisfying $\isl ([1:t_{i,n}]) = p_{i,n}$. Since $(p_{i,n})$ converges to $p_i$, the sequence $(t_{i,n})$ converges to $t_i$. One then has:
 \begin{align*}
    \log (s_1: [1:0] : [1:1] : s_2) &= \log \left| \frac{t_1 \cdot (t_2-1) }{t_2 \cdot (t_1-1)} \right| \\ 
    &= \lim_{n \rightarrow + \infty}  \log \left| \frac{t_{1,n} \cdot (t_{2,n}-1) }{ t_{2,n} \cdot (t_{1,n}-1)} \right| \\ 
     &= \frac{1}{N}\lim_{n \rightarrow + \infty}  \log \left| \frac{\pol_{1,n}(0) \pol_{2,n}(1) }{\pol_{2,n}(0) \pol_{1,n}(1)} \right| \\
     &= \frac{1}{N}\lim_{n \rightarrow + \infty}  \log \left| \frac{f_{1,n}(e_1) f_{2,n}(e_2) }{f_{2,n}(e_1) f_{2,n}(e_2)} \right| \\
     & = \frac{1}{N}  \log \left| \frac{f_{1}(e_1) f_{2}(e_2) }{f_{2}(e_1) f_{2}(e_2)} \right| 
     = \frac{1}{N}\log \big{|} \left[\xi_1 : x : y : \xi_2 \right]_{\rho}\big{|}. \quad \qed
 \end{align*}

\end{proof}

\begin{proof}[Proof of Proposition~\ref{lem_key_lemma}] Up to translating $\O$ by an element of $G$, one may assume that $x,y \in \Phot_{\mathsf{std}}$. Then there exist $a_1, a_2 \in \mathbb{P}(\mathbb{R}^2)$ such that $x= \isl(a_1)$ and $ y = \isl(a_2)$. 

Recall that we denote by $I_{x,y}$ the connected component of $\Phot_{\mathsf{std}} \cap \O$ containing $x$ and~$y$. Let $p_1,p_2 \in \partial \O$ be the endpoints of $I_{x,y}$, such that $p_1, x,y, p_2$ are aligned on $\Phot_{\mathsf{std}}$ in this order. Then there exist $b_1, b_2 \in \mathbb{P}(\mathbb{R}^2)$ such that $b_1, a_1, a_2, b_2$ are aligned in this order and $p_1 = \isl(b_1)$, and $p_2 = \isl(b_2)$.

By dual convexity, for $i \in \{ 1,2 \}$ there exists $\xi_i\in \O^*$ such that $p_i \in \hyp_{\xi_i}$. Then, by Lemma~\ref{lem_comp_crossratios}, one has
 \begin{equation*}
     \ro_{\O}(x,y) =  \log \left| (b_1 : a_1 : a_2: b_2)\right|  = \frac{1}{N}\log \big{|} \left[\xi_1 : x : y : \xi_2 \right]_{\rho}\big{|}.
 \end{equation*}
By the definition of $C_{\O}$, this implies that $\ro_{\O}(x,y) \leq N^{-1} C_{\O}(x,y)$. 

For the converse inequality,  let $\eta_1, \eta_2 \in \O^*$ be such that $C_{\O}(x,y) = \log\left|[\eta_1 : x:y: \eta_2]_{\rho}\right|$. For $i \in \{ 1,2\}$, let $b_i' \in \mathbb{P}(\mathbb{R}^2)$ be such that $\isl(b_i') = \pstd(\eta_i)$. Then, again by Lemma~\ref{lem_comp_crossratios}:
\begin{equation*}
\big{\vert} \log \left| (b_1' : a_1 : a_2 : b_2') \right| \big{\vert}=  \frac{1}{N} \log \left| [\eta_1 : x: y: \eta_2]_{\rho} \right|.
\end{equation*}
Since $\eta_1, \eta_2 \in \O^*$, the two points $\isl(b_1'), \isl(b_2')$ are not contained in $I_{x,y}$. Thus one has
\begin{equation*}
    \big{\vert} \log \left| (b_1' : a_1 : a_2 : b_2') \right| \big{\vert} \leq \log \left| \left(b_1 : a_1 : a_2 : b_2 \right) \right| = \ro_{\O}(x,y).
\end{equation*}
 Hence one has $
    N^{-1}C_{\O}(x,y) \leq \ro_{\O}(x,y)$. We have proved that $N^{-1}C_{\O}(x,y) = \ro_{\O}(x,y)$.

Now let us prove that $\kob_{\O} \geq N^{-1}C_{\O}$. Let $x, y\in \O$ be any two points, and let~$(x_0, \cdots , x_M) \in \mathcal{C}_{x,y}(\O)$. Then one has
\begin{equation}\label{eq_comp_kob_carat}
\begin{split}
    \sum_i \ro_\O(x_i, x_{i+1}) = 
    \frac{1}{N}\sum_i C_{\O}(x_i, x_{i+1}) &\geq \sum_i \frac{1}{N} \sup_{\xi_1^i, \xi_2^i \in \O^*}\log[\xi_1^i : x_i: x_{i+1}: \xi_2^i]_{\rho} \\
    &\geq  \frac{1}{N} \sup_{\xi_1, \xi_2 \in \O^*} \sum_i \log[\xi_1 : x_i: x_{i+1}: \xi_2]_{\rho} \\  &\geq  \frac{1}{N} \sup_{\xi_1, \xi_2 \in \O^*}  \log[\xi_1 : x: y: \xi_2]_{\rho}   \\
    & = \frac{1}{N}C_{\O}(x,y).
\end{split}
\end{equation}
Since this is true for all $(x_0, \cdots, x_{M}) \in \mathcal{C}_{x,y}(\O)$, by taking the infimum we get the inequality~$\kob_{\O}(x,y) \geq N^{-1} C_{\O}(x,y)$. \\
\indent Now let us show that $\kob_{\O}$ is a length metric. In~\eqref{eq_comp_kob_carat}, take $x$ and $y$ to be two conjugate points. The fact that $C_{\O}(x,y) = N \ro_\O(x,y)$ and Equation~\eqref{eq_comp_kob_carat} imply that the segment~$[x,y]$ has $K_{\O}$-length $\ro_{\O}(x,y)$. Hence the $K_{\O}$-length of $\gamma = (x_0, \cdots, x_M) \in \mathcal{C}_{x,y}(\O)$ is 
\begin{equation}\label{eq_longueur_chaine}
    \leng(\gamma) = \sum_i \kob_{\O}(x_i, x_{i+1}).
\end{equation}
Then one has $\kob_{\O}(x,y) = \inf \left\{ \leng(\gamma) \mid \gamma \in \mathcal{C}_{x,y}(\O) \right\}$.

Now let $\mathcal{C}_{x,y}'(\O)$ the set of all rectifiable curves joining $x$ and $y$ in $\O$. By the definition of the length of a curve, one has $\kob_{\O}(x,y) \leq \inf \left\{ \leng(\gamma) \mid \gamma \in \mathcal{C}_{x,y}'(\O) \right\}$. Since chains are rectifiable (for the identification with continuous paths, see Section~\ref{section_chains_length}), this last inequality is an equality. Hence $\kob_{\O}$ is a length metric.
    $\qed$
\end{proof}

\subsection{Properness}\label{sect_prop_K} In this section we state a corollary of Proposition~\ref{lem_key_lemma}, whixh is the properness of the Kobayashi metric on a proper dually convex domain of $\SB(G)$. This fact is of independant interest and will not be used in the rest of the paper.

Let us fix a real finite-dimensional real irreducible linear representation $(V, \rho)$ of $G$ with highest weight $N \omega_r$ for some $N \in \mathbb{N}_{>0}$. Let $\O$ be a proper domain of $\SB(G)$, and let $C_{\O}$ be the Caratheodory metric on $\O$ induced by~$(V, \rho)$. In \cite[Thm 9.1]{zimmer2018proper}, Zimmer proves that the following three assertions are equivalent:
\begin{enumerate}
        \item $\O$ is dually convex;
        \item $C_{\O}$ is a proper metric;
        \item $C_{\O}$ is a complete metric.
\end{enumerate}
The equivalence $(1) \Leftrightarrow (3)$ is stated in \cite[Thm 9.11]{zimmer2018proper}, and the equivalence $(1) \Leftrightarrow (2)$ is a consequence of its proof.

By Proposition~\ref{lem_key_lemma}, one has $\kob_{\O} \geq N^{-1} C_{\O}$. Thus the Kobayashi metric $\kob_{\O}$ is also proper:

\begin{cor}\label{cor_kobayashi_geodesic}
    If $\O \subset \SB(G)$ is a proper dually convex domain, then $\kob_{\O}$ is a proper metric. In particular, if $\O \subset \SB(G)$ is a proper almost-homogeneous domain, then $\kob_{\O}$ is a proper metric.
\end{cor}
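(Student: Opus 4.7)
The plan is to combine Proposition~\ref{lem_key_lemma} with the Zimmer equivalence recalled at the beginning of this section, which characterizes dual convexity of a proper domain $\O$ by properness (equivalently, completeness) of its Caratheodory metric. This reduces the corollary to a routine comparison of closed balls.

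First, I would invoke Section~\ref{sect_irred_htt} to fix a finite-dimensional real irreducible linear representation $(V, \rho)$ of $G$ with highest weight $\chi = N \omega_r$ for some $N \in \mathbb{N}_{>0}$, and let $C_{\O}$ denote the Caratheodory metric on $\O$ induced by $(V, \rho)$. Since $\O$ is dually convex, Zimmer's theorem \cite[Thm 9.1]{zimmer2018proper} guarantees that $C_{\O}$ is a proper (and complete) metric on $\O$.

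Next, Proposition~\ref{lem_key_lemma} provides the key inequality $\kob_{\O} \geq N^{-1} C_{\O}$. From this, for any $x_0 \in \O$ and any $R > 0$, the closed $\kob_{\O}$-ball
\begin{equation*}
B_{\kob_{\O}}(x_0, R) = \{ y \in \O \mid \kob_{\O}(x_0, y) \leq R \}
\end{equation*}
is contained in the closed $C_{\O}$-ball $B_{C_{\O}}(x_0, NR)$, which is compact by properness of $C_{\O}$. Moreover, both $\kob_{\O}$ and $C_{\O}$ generate the standard topology on $\O$ (by Proposition~\ref{prop_generate_topo_std} and \cite[Thm 9.1]{zimmer2018proper} respectively), so $B_{\kob_{\O}}(x_0, R)$ is closed for that topology and hence compact as a closed subset of a compact set. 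This establishes that $\kob_{\O}$ is proper.

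Finally, the "in particular" statement follows immediately from Proposition~\ref{prop_zimmer_dual_convex}, which ensures that any proper quasi-homogeneous domain of $\SB(G)$ is dually convex, reducing to the first assertion. There is no genuine obstacle here: the substantial work has already been carried out in Proposition~\ref{lem_key_lemma} (which in turn rested on the intersection polynomial analysis of Section~\ref{sect_comp_cara_kob}); the present corollary is essentially a packaging of the inequality $\kob_{\O} \geq N^{-1} C_{\O}$ together with Zimmer's properness result for $C_{\O}$.
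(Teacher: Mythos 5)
Your proposal is correct and follows essentially the same route as the paper: fix a $\{\alpha_r\}$-proximal representation $(V,\rho)$ of highest weight $N\omega_r$, invoke Zimmer's equivalence to get properness of $C_\O$ from dual convexity, and then transfer properness to $\kob_\O$ via the inequality $\kob_\O \ge N^{-1}C_\O$ from Proposition~\ref{lem_key_lemma}, with the quasi-homogeneous case reducing to the dually convex case by Proposition~\ref{prop_zimmer_dual_convex}. The only addition you make is the explicit ball-containment argument and the remark that $\kob_\O$-balls are closed for the standard topology, which the paper leaves implicit; both are fine.
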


Proposition~\ref{lem_key_lemma} and Corollary~\ref{cor_kobayashi_geodesic} imply in particular that $\kob_{\O}$ is geodesic as soon as $\O$ is a proper almost-homogeneous (and even just dually convex) domain, although we will not use this fact.

\begin{rmk}
    The only consequence of Proposition~\ref{lem_key_lemma} and Corollary~\ref{cor_kobayashi_geodesic} we will use in the proof of Theorem~\ref{thm_main} is the fact that $\kob_{\O}$ is proper as soon as $\O$ is almost-homogeneous, and that the $K_{\O}$-length of a $1$-chain $(x,y) $ is in this case equal to $\ro_{\O}(x,y)$. Other approaches would have led to this conclusion: for instance, the theory of Euclidean Jordan algebras, or looking at explicit proximal representations of $G$ for each value of $\g$ in Table \ref{table_shilov_bnds}. However, the approach we choose makes it possible to compare the Kobayashi metric with the already existing Caratheodory metrics, and is generalizable to other flag manifolds in which Kobayashi constructions have already been made (see Remark~\ref{rmk_comp_kob_cara}).
\end{rmk}

\section{The dynamics at the boundary}\label{sect_dyn_bndr}

This section is divided into two parts. Section~\ref{sect_visuel_extr} is devoted to the definition and the analysis of a family of points of the boundary of a proper domain in $\SB(G)$, those of \emph{$\MRr$-extremal} points. In Section~\ref{sect_conical_extr}, we prove the existence of $\MRr$-extremal points satisfying a particular geometric property, namely \emph{strongly $\MRr$-extremal points}.

\subsection{$\MRr$-extremal points}\label{sect_visuel_extr}

In this section, following \cite{van2019rigidity}, we investigate the relation between the structure of the boundary of a proper almost-homogeneous domain and the dynamics of its automorphism group.

 In \cite{van2019rigidity}, a notion of $\MRr$-extremal point is defined using rank-one lines in the Grassmannians. A similar notion is introduced in \cite{chalumeau2024rigidity}, using the classical photons of the Einstein Universe. Here we define the analogous notion in the context of Shilov boundaries of $\HTT$ Lie groups:
\begin{definition}\label{def_pts_extremes}
    We say that a point $p \in \partial \O$ is \emph{$\MRr$-extremal} if for any photon $\Phot$ through~$p$, the relative interior of $\Phot \cap \partial \O$ in $\Phot$ does not contain $p$.
\end{definition}
 Following the notation of \cite{van2019rigidity}, we denote by $\Rr(\O)$ the set of $\MRr$-extremal points of $\O$. We will see in Section~\ref{sect_conical_extr} that this set is never empty.

Whenever $\O$ is almost-homogeneous, $\mathcal{R}$-extremal points satisfy a strong geometric property:  
\begin{thm}\label{lem_geom_prop_visuel_2}
    Assume that $\O$ is a proper almost-homogeneous domain of $\SB(G)$. Let $p \in \partial \O$ be an $\mathcal{R}$-extremal point. Then $\hyp_p \cap \O = \emptyset$.
\end{thm}

\begin{rmk}Theorem~\ref{lem_geom_prop_visuel_2} is specific to our context: given a point $p$ of an arbitrary flag manifold $\Fl_{\Theta}$, the set $\hyp_p$ is a subset of $\Fl_{\Theta}^\opp$, which cannot be $G$-equivariantly identified with $\Fl_{\Theta}$ in general. There are versions of Theorem~\ref{lem_geom_prop_visuel_2} for flag manifolds that are not self-opposite, but they express a weaker geometric property for~$\MRr$-extremal points (see e.g.\ \cite{galiay2025convex}). For instance, for the flag manifold $\mathbb{P}(\mathbb{R}^n)$ with $n \geq 2$, which is as far as possible from being self-opposite, this weaker version of Theorem~\ref{lem_geom_prop_visuel_2} expresses that any~$\MRr$-extremal point of $\partial \O$ is contained in a supporting projective hyperplane to $\O$, which is already a consequence of the convexity of $\O$.
\end{rmk}

 For the proof of Theorem~\ref{lem_geom_prop_visuel_2}, we follow the strategy of \cite[Thm 7.4]{van2019rigidity}. We will need the following definition:

\begin{definition}
Let $\O$ be a proper domain of $\SB(G)$ and $x,y \in \O$, and $N \in \mathbb{N}^*$. Let us define
$$\kob_{\O}^N (x,y) := \inf\left\{\leng(\gamma) \big\vert \, \gamma \in \mathcal{C}_{x,y}^{N}(\O)\right\}.$$
Recall that $\leng(\gamma)$ is the $K_{\O}$-length for the metric $\kob_{\O}$ of the path $\gamma$ (see Section~\ref{section_chains_length}). The quantity $\kob_{\O}^N (x,y)$ is finite if, and only if, the set $\mathcal{C}_{x,y}^{N}(\O)$ is nonempty.
\end{definition}
 The map $\kob_{\O}^N: \O \times \O \rightarrow \Rf \cup \{ \infty \}$ is $\Aut(\O)$-invariant. The sequence $(\kob_{\O}^N(x,y))_{N \in \mathbb{N}}$ is nonincreasing, eventually finite,  and one has $\kob_{\O}(x,y) = \lim_{N \rightarrow + \infty} \kob_{\O}^N (x,y)$.

\begin{lem}\label{lem_asymptotic_behav_extremal}
   Let $\O \subset \SB(G)$ be a proper dually convex domain. Let $p \in \partial \O$ be an $\MRr$-extremal point, and let $q \in \overline{\O}$. Let $(x_k), (y_k) \in \O^{\mathbb{N}}$ be two sequences such that $x_k \rightarrow p$ and $y_k \rightarrow q$, and such that there exist $N \in \mathbb{N}$ and $M>0$ such that $\kob_{\O}^N (x_k, y_k) \leq  M$ for all $k \in \mathbb{N}$. Then $p=q$.
\end{lem}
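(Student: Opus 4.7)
The plan is to proceed by induction on $N$, the case $N=1$ carrying the main content. First I would handle the inductive step: given a chain $(x_k=x_k^0,\ldots,x_k^N=y_k)$ of length $N$ with total Kobayashi length at most $M$, I extract a subsequence so that each $x_k^i$ converges to some $x_\infty^i\in\overline{\O}$, and then apply the base case to each consecutive pair in turn. Once $x_\infty^i=p$ has been established, $p$ is still $\MRr$-extremal, so the base case applies again to $(x_k^i,x_k^{i+1})$ and yields $x_\infty^{i+1}=p$; iterating gives $q=x_\infty^N=p$.

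For the base case, the points $x_k$ and $y_k$ lie on a common photon $\Phot_k$, in the same connected component $I_k$ of $\Phot_k\cap\O$ with endpoints $a_k^\pm\in\partial\O$. Using compactness of $\overline{\O}$, extracting subsequences of the affine lines $\Phot_k\cap\Affstd$, and invoking properness of $\kob_\O$ on proper dually convex domains (Proposition~\ref{lem_key_lemma} combined with \cite[Thm~9.1]{zimmer2018proper}), I may assume $\Phot_k\to\Phot_\infty$ for some photon $\Phot_\infty$, $a_k^\pm\to a_\infty^\pm\in\partial\O$, and $y_k\to q\in\partial\O$. Passing to the limit in $\ro_\O(x_k,y_k)=\log(a_k^-:x_k:y_k:a_k^+)\leq M$, if $p\ne q$ then $a_\infty^-,p,q,a_\infty^+$ are four distinct points aligned in this order along $\Phot_\infty$.

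I then argue by contradiction, assuming $p\ne q$. By dual convexity, pick $\xi_p\in\O^*$ with $p\in\hyp_{\xi_p}$, and split into two cases on the limit photon. If $\Phot_\infty\subset\hyp_{\xi_p}$, then the arc $(a_\infty^-,a_\infty^+)\subset\Phot_\infty$, being a Hausdorff limit of the open intervals $I_k\subset\O$, lies in $\overline{\O}\cap\hyp_{\xi_p}\subset\partial\O$; this places $p$ in the relative interior of $\Phot_\infty\cap\partial\O$, contradicting the $\MRr$-extremality of $p$. Otherwise $\Phot_\infty\not\subset\hyp_{\xi_p}$, and Lemma~\ref{lem_photons_affine} (applied via a $G$-translate sending $\Phot_{\mathsf{std}}$ to $\Phot_\infty$) shows that $\Phot_\infty\cap\hyp_{\xi_p}$ is a singleton, necessarily $\{p\}$; hence $q\notin\hyp_{\xi_p}$. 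I then choose $\eta\in\O^*$ transverse to both $p$ and $q$, which exists because $\O^*$ has nonempty interior while the loci $\{\eta:p\in\hyp_\eta\}$ and $\{\eta:q\in\hyp_\eta\}$ are proper algebraic subvarieties of $\SB(G)$. Fix a finite-dimensional irreducible representation $(V,\rho)$ of $G$ with highest weight $N_0\omega_{\alpha_r}$. Then
\[
[\xi_p:x_k:y_k:\eta]_{\rho}=\frac{f_{\xi_p}(\iota(x_k))\,f_\eta(\iota(y_k))}{f_{\xi_p}(\iota(y_k))\,f_\eta(\iota(x_k))}\longrightarrow 0,
\]
since the factor $f_{\xi_p}(\iota(x_k))\to f_{\xi_p}(\iota(p))=0$ while the other three factors converge to nonzero limits. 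Hence $\bigl|\log\bigl|[\xi_p:x_k:y_k:\eta]_\rho\bigr|\bigr|\to+\infty$, contradicting the bound $\bigl|\log\bigl|[\xi_p:x_k:y_k:\eta]_\rho\bigr|\bigr|\leq C_\O(x_k,y_k)\leq N_0\,\kob_\O(x_k,y_k)\leq N_0\,M$ coming from Proposition~\ref{lem_key_lemma}.

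The hard part is the dichotomy on the limit photon. It is precisely the degenerate case $\Phot_\infty\subset\hyp_{\xi_p}$ where $\MRr$-extremality is genuinely used; in the generic case, the contradiction is cross-ratio based, and rests on the rigidity of photons from Section~\ref{sect_photons_def} (a photon meets a generic $\hyp_\xi$ in a single point) together with the Caratheodory--Kobayashi comparison of Proposition~\ref{lem_key_lemma}.
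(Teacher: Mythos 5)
Your inductive step is the same as the paper's (choose near-optimal chains, observe that each consecutive pair has bounded Kobayashi distance, apply the base case iteratively), and your setup for the base case is also the same: pass to limits along the photons $\Phot_k$, obtaining limit endpoints and a bounded cross-ratio on $\Phot_\infty$. The preliminary deduction that the four limit points $a_\infty^-, p, q, a_\infty^+$ are distinct when $p\neq q$ is correct (if an endpoint coalesced with $p$ or with $q$, the limiting Hilbert cross-ratio would blow up), and both you and the paper use a version of this.

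Where you genuinely diverge is in how the contradiction is extracted. The paper argues directly that $\MRr$-extremality of $p$, applied to the arc $[a_\infty^-,a_\infty^+]\subset\Phot_\infty\cap\overline{\O}$, forces $p\in\{a_\infty^-,a_\infty^+\}$, and then reads off $q=p$ from the bounded intrinsic cross-ratio on $\Phot_\infty$; this is a purely photon-internal argument. You instead invoke dual convexity to pick a supporting hypersurface $\hyp_{\xi_p}$ at $p$ and dichotomize: if $\Phot_\infty\subset\hyp_{\xi_p}$ then the limit arc sits inside $\partial\O$ and $\MRr$-extremality of $p$ is contradicted; otherwise Lemma~\ref{lem_photons_affine} localizes $\Phot_\infty\cap\hyp_{\xi_p}=\{p\}$, so $q$ is transverse to $\xi_p$, and the Carath\'eodory cross-ratio $[\xi_p:x_k:y_k:\eta]_\rho$ degenerates to $0$, contradicting the bound $C_\O\leq N_0\kob_\O$ of Proposition~\ref{lem_key_lemma}. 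This is a correct alternative. Your route is somewhat longer and leans more on the ambient representation-theoretic machinery (Carath\'eodory metric, the embedding $\iota$, $\iota^*$) rather than the photon-intrinsic Hilbert cross-ratio, but it has the merit of making explicit where dual convexity enters (only in producing $\xi_p$), isolating precisely the degenerate case $\Phot_\infty\subset\hyp_{\xi_p}$ as the place where $\MRr$-extremality is used, and working verbatim with the definition of $\MRr$-extremality in terms of $\partial\O$ as written. Two minor points to tighten if you write this up: the Hausdorff limit you invoke should be that of the closed arcs $\overline{I_k}$, and the convergence $[\xi_p:x_k:y_k:\eta]_\rho\to 0$ should be phrased with convergent lifts $\nu_{x_k},\nu_{y_k}$ rather than the projective points $\iota(x_k),\iota(y_k)$.
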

\begin{proof}[Proof]
    For any $k \in \mathbb{N}$, let $\gamma_k = (x_k^0 := x_k ,\dots, x_k^{N} := y_k) \in \mathcal{C}^{N}_{x_k,y_k}(\O)$ be such that 
    $$\sum_{i=0}^{N-1} \kob_{\O}(x_i^k, x_{i+1}^k) = \leng(\gamma_k) \leq \kob_{\O}^N (x_k, y_k) + 1 \leq M + 1,$$
    the first equality holding because of~\eqref{eq_longueur_chaine}.
    Then, one has $\kob_{\O}(x_k^i, x_k^{i+1}) \leq M + 1$ for all $0 \leq i \leq  N-1$. Hence one can assume that $N = 1$, and the lemma follows by induction.
    
    Let us then assume that $N=1$. For all $k$, the two points $x_k $ and $y_k$ lie in the same connected component of the intersection $I_k := \Phot_k \cap \O$ of a photon $\Phot_k$ with $\O$. Let~$a_k,b_k$ be the endpoints of $I_k$ such that $a_k, x_k, y_k, b_k$ are aligned in this order. If $g_k \in G$ is such that $\Phot_k = g_k \cdot \Phot_{\mathsf{std}}$ and if we define $\isl^k := \isl_{g_k^{-1}}$ (recall~\eqref{eq_param_photons_2}), then there exist~$r_k, s_k, t_k, u_k \in \mathbb{P}(\mathbb{R}^2)$, aligned in this order, such that 
    \begin{equation*}
            \isl^k(r_k) = a_k; \quad \isl^k(s_k) = x_k; \quad
            \isl^k(t_k) = y_k; \quad
            \isl^k(u_k) = b_k.
    \end{equation*}
Then Proposition~\ref{lem_key_lemma} implies that
    \begin{equation*}
       \log (r_k : s_k : t_k:u_k) = \ro_{\O}(x_k, y_k) =  \kob_{\O}(x_k, y_k) \leq M.
    \end{equation*}
Up to extracting, we may assume that there exist $a_{\infty}, b_{\infty} \in \partial \O$ such that $a_k \rightarrow a_{\infty}$ and $b_k \rightarrow b_{\infty}$ as $k \rightarrow + \infty$, and also that there exist $r,s,t,u \in \mathbb{P}(\mathbb{R}^2)$ such that~$(r_k,s_k, t_k, u_k) \rightarrow (r,s,t,u)$. For all $k \in \mathbb{N}$, the points $a_k, x_k, y_k, b_k$ lie on the same photon in this order, so $a_{\infty}, p,q, b_{\infty}$ lie on the same photon, in this order. Since $p$ is $\MRr$-extremal, it must be equal to either $a_{\infty}$ or $b_{\infty}$. If it is equal to $b_{\infty}$, then~$q \in [p, b_{\infty}]$ is automatically equal to $p$. If $p = a_{\infty}$, then we must have $s = r$. Since the sequence~$\left(\log (r_k : s_k : t_k:u_k) \right)$ is bounded, this implies that $s = t$. Hence $p = q$. $\qed$
\end{proof}

We can now prove an analogue of Fact~\ref{lem_vey}:

\begin{lem}\label{lem_analogue_vey}
    Let $\O$ be a proper almost-homogeneous domain. Then for all $p \in \Rr(\O)$ there exists $(g_n) \in \Aut(\O)^{\mathbb{N}}$ such that for every compact subset $\mathcal{K} \subset \O$, one has $g_n \cdot \mathcal{K} \rightarrow \{p\}$ for the Hausdorff topology.
\end{lem}

\begin{proof}
        Since $\Omega$ is almost-homogeneous we can find $x\in \Omega$ and some sequence $(g_k)\in\Aut(\O)^{\mathbb{N}}$ such that 
    $g_k\cdot x\underset{k\rightarrow+\infty}{\longrightarrow}p.$
    Now let $y \in \O$ and $N \in \mathbb{N}$ such that $\kob_{\O}^N(x,y) < + \infty$. Then, by $\Aut(\O)$-invariance of $\kob_{\O}^N$, one has 
\begin{equation*}
    \kob_{\O}^N(g_k \cdot x, g_k \cdot y) = \kob_{\O}^N(x, y) \quad \forall k \in \mathbb{N}.
\end{equation*}
Thus by Lemma~\ref{lem_asymptotic_behav_extremal}, we have $g_k \cdot y \rightarrow p$. This holds for all $y \in \O$.

Let $\mathcal{K} \subset \O$ be a compact subset. If the sequence~$g_k \cdot \mathcal{K}$ does not converge to $\{p\}$ for the Hausdorff topology, then there is a neighborhood~$\mathcal{V}$ of $p$ in $\SB(G)$ and a sequence $(y_k) \in \mathcal{K}^{\mathbb{N}}$ such that $g_k \cdot y_k \notin \mathcal{V}$ for all~$k \in \mathbb{N}$. Since $\mathcal{K}$ is a compact subset of $\O$ by Corollary~\ref{cor_kobayashi_geodesic}, up to extracting we may assume that there exists $y \in \mathcal{K}$ such that $y_k \rightarrow y$. Then $(g_k \cdot y)$ converges to~$p$. But~$\kob_{\O}(g_k \cdot y_k, g_k \cdot y) \rightarrow 0$, so $g_k \cdot y_k \rightarrow p$. But this is impossible, since we assumed that $y_k \notin \mathcal{V}$ for all $k$.

Hence $g_k \cdot \mathcal{K} \rightarrow \{ p\}$ for the Hausdorff topology. $\qed$
\end{proof}

We can now prove Theorem~\ref{lem_geom_prop_visuel_2}:

\begin{proof}[Proof of Theorem~\ref{lem_geom_prop_visuel_2}]
Applying Lemma~\ref{lem_analogue_vey} to a compact subset $\mathcal{K} \subset \O$ with nonempty interior, and using the KAK decomposition, up to extracting, we may assume that there exists $q \in \SB(G)$ such that $(g_k)$ is $(p,q)$-contracting, in the sense that 
$$g_{k|\SB(G) \smallsetminus \hyp_q} \rightarrow p,$$
uniformly on compact subsets of $\SB(G) \smallsetminus \hyp_q$ (see e.g.\ \cite[Appendix A]{weisman2022extended} and \cite[Prop.\ 4.16]{kapovich2017anosov}). Since $\O$ is proper, the dual $\O^*$ of $\O$ has nonempty interior (see Section~\ref{sect_dual_proper_domain}). Hence there exists $z \in \O^* \smallsetminus \hyp_q$. Then $g_k \cdot z \rightarrow p$. But $\O^*$ is $\Aut(\O)$-invariant, so $g_k \cdot z \in \O^*$ for all $k \in \mathbb{N}$. Hence $p \in \O^*$ and by the definition of the dual we must have $\hyp_p \cap \O = \emptyset$. $\qed$
\end{proof}

\subsection{Strongly  $\MRr$-extremal points}\label{sect_conical_extr}
Let $\Omega\subset \SB(G)$ be a domain which is proper in $\Affstd$. We say that a point $p\in\partial\Omega$ is \emph{strongly $\MRr$-extremal} if either $ \mathbf{C}^-(p) \cap \overline{\O} = \{p\}$ or $\mathbf{C}^+(p) \cap \overline{\O} = \{p\}$. 

In general there are less strongly  $\MRr$-extremal points than $\MRr$-extremal points. However, the next lemma shows that strongly  $\MRr$-extremal points always exist.
\begin{lem}\label{lem_existence_extremal}
    Let $\Omega$ be a domain which is proper in $\Affstd$. Then for any $x\in \Omega$ there exist two strongly $\MRr$-extremal points $p \in \J^-(x)$ and $q \in \J^+(x)$.
\end{lem}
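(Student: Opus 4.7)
The plan is to select the desired strongly $\MRr$-extremal points by maximizing and minimizing a well-chosen affine functional on the compact set $\overline{\O}$, exploiting the causal partial order on $\Affstd$. Since $c^0 \subset \uu^-$ is open and properly convex, its dual cone has nonempty interior, so I can pick a linear form $\phi : \uu^- \to \Rf$ with $\phi(v) > 0$ for every $v \in \overline{c^0} \smallsetminus \{0\}$. Transferring $\phi$ via $\varphi_{\mathsf{std}}^{-1}$ gives an affine function $\widetilde{\phi}$ on $\Affstd$. The set $\overline{\O} \cap \J^+(x)$ is a nonempty compact subset of $\Affstd$: it contains $x$, lies in the compact set $\overline{\O}$ (since $\O$ is proper in $\Affstd$), and $\J^+(x) = \varphi_{\mathsf{std}}(\varphi_{\mathsf{std}}^{-1}(x) + \overline{c^0})$ is closed in $\Affstd$. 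Let $q$ be a point of $\overline{\O} \cap \J^+(x)$ at which $\widetilde{\phi}$ attains its maximum.

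I then verify that $q$ has the required properties. First, $q \in \partial \O$: otherwise $q$ belongs to the open set $\O$, and one can choose $q' \in \O \cap \I^+(q)$ arbitrarily close to $q$; by transitivity (Fact~\ref{lem_causality_order_relation}) one has $q' \in \J^+(x)$, while $\varphi_{\mathsf{std}}^{-1}(q') - \varphi_{\mathsf{std}}^{-1}(q) \in c^0$ gives $\widetilde{\phi}(q') > \widetilde{\phi}(q)$, contradicting maximality. Second, I claim $\J^+(q) \cap \overline{\O} = \{q\}$: for any $r$ in this intersection, transitivity yields $r \in \J^+(x) \cap \overline{\O}$, hence $\widetilde{\phi}(r) \leq \widetilde{\phi}(q)$; on the other hand $\varphi_{\mathsf{std}}^{-1}(r) - \varphi_{\mathsf{std}}^{-1}(q) \in \overline{c^0}$ forces $\widetilde{\phi}(r) \geq \widetilde{\phi}(q)$, and then the strict positivity of $\phi$ on $\overline{c^0} \smallsetminus \{0\}$ forces $r = q$. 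Since $\mathbf{C}^+(q) \subset \J^+(q)$ and $q \in \mathbf{C}^+(q)$ (using $0 \in \partial c^0$, which holds because the open properly convex cone $c^0$ does not contain $0$), I conclude $\mathbf{C}^+(q) \cap \overline{\O} = \{q\}$, so that $q$ is strongly $\MRr$-extremal and lies in $\J^+(x)$.

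An entirely symmetric argument, minimizing $\widetilde{\phi}$ over the compact set $\overline{\O} \cap \J^-(x)$, produces $p \in \J^-(x) \cap \partial\O$ with $\mathbf{C}^-(p) \cap \overline{\O} = \{p\}$. I do not foresee any serious obstacle: the proof is essentially a compactness plus causal-monotonicity argument. The two points that require a moment's care are the existence of a linear form $\phi$ strictly positive on $\overline{c^0} \smallsetminus \{0\}$ (guaranteed by the proper convexity of $c^0$) and the use of transitivity of the causal order to upgrade the local maximality of $\widetilde{\phi}(q)$ in $\overline{\O} \cap \J^+(x)$ to the global statement $\J^+(q) \cap \overline{\O} = \{q\}$.
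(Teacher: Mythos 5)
Your proof is correct and takes essentially the same approach as the paper: both extremize a linear functional that is strictly positive on $\overline{c^0}\smallsetminus\{0\}$ over the compact set $\overline{\O}\cap\J^{\pm}(x)$ and use the strict positivity together with causal transitivity to conclude strong $\MRr$-extremality. Your version is slightly more explicit (you verify $q\in\partial\O$ and deduce the stronger fact $\J^{+}(q)\cap\overline{\O}=\{q\}$ before passing to $\mathbf{C}^{+}(q)$), but the underlying idea is identical.
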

\begin{proof}[Proof]
Up to translating $\O$ in $\Affstd$, we may assume that $x = P$. Since $c^0$ is a properly convex cone of $\uu^-$, there is a nonzero linear map $\psi$ of $\uu^-$ such that $\overline{c^0} \smallsetminus \{0\} $ is contained in $\{\psi >0\}$. Let $X \in \uu^-$ be the element of $\overline{\varphi_{\mathsf{std}}^{-1}(\O)} \cap (-\overline{c^0})$ such that $\psi(X)$ is minimal. Then $p := \exp(X)P $ lies in $\partial \O$. 

Let us show that $p$ is strongly $\MRr$-extremal. Let $y \in \mathbf{C}^- (p)$. Write $y = \exp(Y) P$ with $Y \in X- \partial c^0$. Then one has
\begin{equation}\label{egalssi}
 \psi(X-Y) \geq 0,\text{ with equality if, and only if, }y=p.
\end{equation}
If moreover $y \in \partial \O$, then $y \in \mathbf{J}^-(x) \cap \overline{\O}$, so $\psi(Y) \geq \psi(X)$. Then, by~\eqref{egalssi}, one has~$y = p$. Hence $p$ is strongly $\MRr$-extremal. $\qed$
\end{proof}

\begin{rmk} \label{rmk_strong_extr_pts}
\begin{enumerate}
\item Lemma~\ref{lem_existence_extremal} is a generalization of~\cite[Lem.\ 6.4]{chalumeau2024rigidity}, where the $\mathcal{R}$-extremal points are called \emph{photon-extremal}. 
    \item By Lemma~\ref{lem_photons_affine}, Remark~\ref{rmk_photons_included_in_cone} and the fact that $c^0$ is a properly convex cone in $\uu^-$, strongly $\MRr$-extremal points are always $\MRr$-extremal, but the converse is false in general. For instance, for $G = \SO(n,2)$, take $p,q \in \Affstd$ with $q \in \mathbf{I}^+(p)$. Then $\Diams(p,q)$ has exactly two strongly $\MRr$-extremal points, namely $p$ and $q$. The points of $\mathbf{C}^+(p) \cap \mathbf{C}^-(q)$ are $\MRr$-extremal but not strongly $\MRr$-extremal.
    \item Contrary to the notion of $\MRr$-extremality, that of strong $\MRr$-extremality is only defined for a domain $\O$ which is proper in $\Affstd$. It is not clear at first that this second notion is invariant under $\Aut(\O)$. Using Lemma~\ref{lem_loc_action_U+} it is actually possible to show that this is the case whenever $\O$ is dually convex, although we will not use this fact. We will only prove this invariance in the \emph{almost-homogeneous case}; see Lemma  \ref{lem_action_strongly_extremal_pts}.
\end{enumerate}
\end{rmk}

\section{End of the proof of Theorem~\ref{thm_main}} \label{sect_end_mainth}

In this section we finish the proof of Theorem~\ref{thm_main}. We take $\O \subset \SB(G)$ an almost-homogeneous domain, proper in $\Affstd$.

Let $x \in \O$ and let $p_0 \in \partial \O \cap \mathbf{J}^-(x)$ and $q_0 \in \partial \O \cap \mathbf{J}^+(x)$ be two strongly $\MRr$-extremal points of $\partial \O$ given by Lemma~\ref{lem_existence_extremal}. Then in particular $p_0, q_0 \in \Rr(\O)$ (Remark~\ref{rmk_strong_extr_pts}.(2)), so by Theorem~\ref{lem_geom_prop_visuel_2} 
\begin{equation}\label{eq_omega_inter_vide}
    \O \cap \hyp_{p_0} = \O \cap \hyp_{q_0} = \emptyset.
\end{equation}
By reflexivity, one has $x \in \mathbf{J}^+(p_0)$. By~\eqref{eq_omega_inter_vide}, we also know that $x \notin \hyp_{p_0}$. Then, by Fact~\ref{fact_cone_in_schubert_subvrty}, one has $x \notin \mathbf{C}(p_0)$, and hence $x \in \mathbf{I}^+(p_0)$. Similarly, one has $x \in \mathbf{I}^-(q_0)$. Hence $x \in \Diams(p_0, q_0)$. By connectedness of $\O$, we then have the inclusion
\begin{equation}\label{eq_omega_inclus_diam}
    \O \subset \Diams(p_0,q_0).
\end{equation}

The goal of the rest of this section is to prove the converse inclusion. First observe that $p_0$ and $q_0$ are characterized among $\MRr$-extremal points of $\partial \O$ by a geometric property:
\begin{lem}\label{lem_caract_strongly_extremal_points}
    Let $p \in \Rr(\O)$ be such that $\mathbf{I}^+(p) \cap \O \ne \emptyset$ (resp.\ $\mathbf{I}^-(p) \cap \O \ne \emptyset$). Then $p = p_0$ (resp.\ $p = q_0$).
\end{lem}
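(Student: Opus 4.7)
The plan is to derive $p = p_0$ from the antisymmetry of the causal order (Fact~\ref{lem_causality_order_relation}) by establishing both $p \in \mathbf{J}^+(p_0)$ and $p_0 \in \mathbf{J}^+(p)$. One of these two inclusions will be essentially free from what has already been proved, while the other will come from the $\MRr$-extremality hypothesis on $p$.

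The easy direction is $p \in \mathbf{J}^+(p_0)$: indeed, by~\eqref{eq_omega_inclus_diam} one has $\O \subset \Diams(p_0, q_0) \subset \mathbf{I}^+(p_0)$, so taking the closure in $\Affstd$ yields $\overline{\O} \subset \mathbf{J}^+(p_0)$, and in particular $p \in \mathbf{J}^+(p_0)$ since $p \in \partial \O$.

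For the reverse inclusion $p_0 \in \mathbf{J}^+(p)$, I would show the stronger statement $\O \subset \mathbf{I}^+(p)$. Since $p \in \Rr(\O)$, Proposition~\ref{lem_extr_are_visual} makes $p$ visual, and Lemma~\ref{lem_geom_prop_visuel} then gives $\hyp_p \cap \O = \emptyset$. Combined with $\overline{\O} \subset \Affstd$, this reads $\O \subset \Affstd \smallsetminus \hyp_p$. Fact~\ref{fact_cone_in_schubert_subvrty} identifies $\mathbf{I}^+(p)$ as a connected component of $\Affstd \smallsetminus \hyp_p$; since $\O$ is connected and meets $\mathbf{I}^+(p)$ by hypothesis, the entire domain $\O$ must lie in that component, whence $\O \subset \mathbf{I}^+(p)$ and therefore $p_0 \in \overline{\O} \subset \mathbf{J}^+(p)$. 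Antisymmetry then forces $p = p_0$, and the assertion about $q_0$ follows by reversing past and future throughout.

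The only delicate point is the step of pinning down which connected component of $\Affstd \smallsetminus \hyp_p$ contains $\O$. For a general $\HTT$ Lie group, $\Affstd \smallsetminus \hyp_p$ may have strictly more than two connected components (see Example~\ref{ex_eplicit_diamonds}.(1)), so knowing only that $\O$ avoids $\hyp_p$ would not locate $\O$; the hypothesis $\mathbf{I}^+(p) \cap \O \ne \emptyset$ is precisely what selects the correct component. Beyond that, every step reduces to a lemma already available, so I do not anticipate any further obstacle.
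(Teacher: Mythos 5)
Your proof is correct and essentially identical to the paper's. The paper's proof follows the same chain: $\MRr$-extremality $\Rightarrow$ (via Lemma~\ref{lem_geom_prop_visuel}) $\hyp_p \cap \O = \emptyset$, then connectedness plus the hypothesis $\mathbf{I}^+(p) \cap \O \ne \emptyset$ forces $\O \subset \mathbf{I}^+(p)$, giving $p_0 \in \mathbf{J}^+(p)$; combined with $p \in \overline{\O} \subset \mathbf{J}^+(p_0)$ from~\eqref{eq_omega_inclus_diam}, antisymmetry concludes. You make explicit the intermediate appeal to Proposition~\ref{lem_extr_are_visual} (which the paper leaves implicit when invoking Lemma~\ref{lem_geom_prop_visuel}), and you correctly flag why the hypothesis on $\mathbf{I}^+(p) \cap \O$ is needed to pin down the connected component, but these are refinements of the same argument rather than a different route.
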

\begin{proof}[Proof]Let us prove the Lemma for $\mathbf{I}^+(p) \cap \O \ne \emptyset$, the proof being similar for proving~$\mathbf{I}^-(p) \cap \O \ne \emptyset$. Since $p $ is $\MRr$-extremal, by Theorem~\ref{lem_geom_prop_visuel_2}, one has $\hyp_p \cap \O = \emptyset$, so by connectedness, the set $\O$ is included in one of the connected components of $\Affstd \smallsetminus \hyp_p$. Since $\mathbf{I}^+(p) \cap \O \ne \emptyset$, one has $\O \subset \mathbf{I}^+(p)$. But then one has $p_0 \in \overline{\O} \subset \mathbf{J}^+(p)$. By~\eqref{eq_omega_inclus_diam}, we also have $p \in \overline{\O} \subset  \mathbf{J}^+(p_0)$. By antisymmetry this implies $p=p_0$. $\qed$
\end{proof}

Using Lemma~\ref{lem_loc_action_U+}, we can now prove:

\begin{prop}\label{lem_action_strongly_extremal_pts}
    Let $g \in \Aut(\O)$. Then $g \cdot p_0, g \cdot q_0 \in \{p_0, q_0\}$.
\end{prop}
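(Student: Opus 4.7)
The plan is to show that $p_0' := g \cdot p_0$ lies in $\{p_0, q_0\}$; the argument for $g \cdot q_0$ will be strictly analogous. The three ingredients are: (i) $\mathcal{R}$-extremality is $G$-equivariant (any $g \in G$ sends photons to photons and $\partial \O$ to $g \cdot \partial \O$, so $p \in \mathcal{R}(\O) \Leftrightarrow g \cdot p \in \mathcal{R}(g \cdot \O)$); (ii) Lemma~\ref{lem_caract_strongly_extremal_points}, which characterizes $p_0$ and $q_0$ among $\mathcal{R}$-extremal points of $\partial \O$ via the nonemptiness of $\mathbf{I}^{\pm}(\cdot) \cap \O$; (iii) Lemma~\ref{lem_loc_action_U+}, which provides local control of the future/past under the action of an element of $G$.

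First I would check that $p_0' \in \overline{\O} \subset \Affstd$ and $p_0' \in \partial \O$: indeed $g \in \Aut(\O)$ preserves both $\O$ and $\overline{\O}$, and the latter is contained in $\Affstd$ by hypothesis. Combined with (i), this gives $p_0' \in \mathcal{R}(\O) \cap \Affstd$, so Lemma~\ref{lem_caract_strongly_extremal_points} applies to $p_0'$.

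Next, by Lemma~\ref{lem_loc_action_U+} applied to the point $p_0 \in \Affstd$, the element $g \in G$ (with $g \cdot p_0 \in \Affstd$, which we just verified), and $\delta_1 = +$, there exist $\delta_2 \in \{-,+\}$ and a neighborhood $\mathcal{U}$ of $p_0$ in $\SB(G)$ such that
\begin{equation*}
g \cdot \left(\mathcal{U} \cap \mathbf{I}^+(p_0)\right) \subset \mathbf{I}^{\delta_2}(p_0').
\end{equation*}
Now, since $\O \subset \Diams(p_0, q_0) \subset \mathbf{I}^+(p_0)$ and $p_0 \in \overline{\O}$, the intersection $\O \cap \mathcal{U}$ is nonempty and entirely contained in $\mathcal{U} \cap \mathbf{I}^+(p_0)$. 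Picking any $y$ in it, the point $g \cdot y$ lies in $\O$ (as $g \in \Aut(\O)$) and in $\mathbf{I}^{\delta_2}(p_0')$. Therefore $\mathbf{I}^{\delta_2}(p_0') \cap \O \neq \emptyset$, and Lemma~\ref{lem_caract_strongly_extremal_points} forces $p_0' = p_0$ if $\delta_2 = +$ and $p_0' = q_0$ if $\delta_2 = -$. In either case $g \cdot p_0 \in \{p_0, q_0\}$.

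The argument for $g \cdot q_0$ is identical after replacing $p_0$ by $q_0$, $\delta_1 = +$ by $\delta_1 = -$, and using $\O \subset \mathbf{I}^-(q_0)$. The only place requiring genuine care is verifying the hypothesis $g \cdot p_0 \in \Affstd$ of Lemma~\ref{lem_loc_action_U+}, which is precisely where quasi-homogeneity (via the inclusion $\overline{\O} \subset \Affstd$ recorded at the start of this section) enters; everything else is a direct concatenation of earlier lemmas.
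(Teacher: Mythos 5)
Your proof is correct and follows the same route as the paper's own argument: verify $g\cdot p_0 \in \overline{\O}\subset\Affstd$, apply Lemma~\ref{lem_loc_action_U+} to get a sign $\delta_2$, produce a point of $\O\cap\mathbf{I}^{\delta_2}(g\cdot p_0)$, then conclude via Lemma~\ref{lem_caract_strongly_extremal_points}. You are slightly more explicit than the paper in two places --- verifying that $g\cdot p_0\in\mathcal{R}(\O)$ via $G$-equivariance of $\mathcal{R}$-extremality, and invoking $\O\subset\mathbf{I}^+(p_0)$ to place $y$ in $\mathcal{U}\cap\mathbf{I}^+(p_0)$ --- both of which the paper uses implicitly.
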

\begin{proof}[Proof]
    Let us prove the proposition only for $p_0$, the case of $q_0$ being similar.
    Up to translating $\O$ in $\Affstd$, one can assume that $p_0 = P$. Since $gP = g \cdot p_0 \in \overline{\O} \subset \Affstd$, by Lemma~\ref{lem_loc_action_U+} there is a neighborhood $\mathcal{U}$ of $P$ such that $g \cdot (\mathcal{U} \cap \I^+(P)) \subset \I^\delta(gP)$ for some $\delta \in \{ -, +\}$. Since $P \in \partial \O$, there exists $z \in \mathcal{U} \cap \O$. Since $g \in \Aut(\O)$, one has~$g \cdot z \in \O$. Hence $g \cdot z \in \O \cap \mathbf{I}^{\delta}(g P) \ne \emptyset$. Then, by Lemma~\ref{lem_caract_strongly_extremal_points}, we must have~$g \cdot p_0 = p_0$ if $\delta = +$, and $g \cdot p_0 = q_0$ if $\delta = -$.  $\qed$
\end{proof}

We have shown in Proposition~\ref{lem_action_strongly_extremal_pts} that for any $g \in \Aut(\O)$, the element $g$ stabilizes the pair $\{ p_0, q_0 \}$. Then $\Aut(\O)$ preserves the set $\SB(G) \smallsetminus (\hyp_{p_0} \cup \hyp_{q_0})$, and hence permutes its connected component. Since $\O \subset \Diams(p_0, q_0)$ is $\Aut(\O)$-invariant, the group~$\Aut(\O)$ preserves the connected component $\Diams(p_0, q_0)$ of $\SB(G) \smallsetminus (\hyp_{p_0} \cup \hyp_{q_0})$. Then:
\begin{equation*}\label{eq_auto_inclus}
\Aut(\O) \leq \Aut(\Diams(p_0, q_0)).    
\end{equation*}
Since $\Diams(p_0, q_0)$ is proper, Lemma~\ref{lem_inclusion_equality_almost_homogeneous} implies that $\O = \Diams(p_0, q_0)$. This concludes the proof of Theorem~\ref{thm_main}.

\section{Application: closed manifolds with proper development}\label{section_cpt_manifolds_proper_dvt} In this section we prove Corollary~\ref{main_cor}. A manifold $M$ is a \emph{$(G, \SB(G))$-manifold}, if there exists a (maximal) atlas of charts $(U, \psi_{U})_{U \in \mathcal{A}}$ on $M$ with values in $\SB(G)$, such that for any $U,V \in \mathcal{A}$ with $U \cap V \ne \emptyset$, the map $\psi_V \circ \psi_U^{-1}$ is the restriction of an element of $G$ to $\varphi_U(U \cap V)$. In this case there exists a map $\dev:\widetilde{M}\rightarrow\SB(G)$, called the \emph{developing map} (unique up to postcomposition by elements of $G$), where $\widetilde{M}$ is the universal cover of $M$. We say that $M$ is \emph{proper} if $\dev(\widetilde{M})$ is a proper domain of $\SB(G)$. This property is independent of the the choice of the developing map for $M$.

\begin{proof}[Proof of Corollary~\ref{main_cor}] The corollary is straightforward if $G$ is of real rank 1, hence we may assume that the real rank of $G$ is $r \geq 2$.

Let $\pi_1(M)$ be the fundamental group of $M$ and let $\hol: \pi_1(M) \rightarrow G$ be the holonomy of $M$, so that the map $\dev$ is $\hol$-equivariant. Let $\O=\dev(\widetilde{M})$. Since $M$ is closed, there is a compact fundamental domain $\mathcal{K}\subset\widetilde{M}$ intersecting every $\pi_1(M)$-orbit. By~$\hol$-equivariance of $\dev$, the compact set $\dev(\mathcal{K})\subset\O$ intersects any $\hol(\pi_1(M))$-orbit. Hence $\O$ is almost-homogeneous and thus almost-homogeneous. Theorem~\ref{thm_main} implies that $\O$ is a diamond and is~$\left(H_1, H_2\right)$-equivariantly diffeomorphic to the symmetric space $\mathbb{R} \times X_{L_{s}}$, where $H_1, H_2$ are finite-index subgroupds of $\Aut(\O), \Iso \left(\Rf \times X_{L_s}\right)$ respectively. Up to taking a finite cover of $M$, we may assume that $H_1 = \Aut(\O)$ and $H_2 = \Iso(\Rf \times X_{L_s})$.

Let $g_\O$ be the invariant Riemannian metric of $\O$ equal to $g_{\mathbb{R}}\oplus g_{X_{L_{s}}}$ under the previous identification and let $g=\dev^*g_\O$. The metric $g$ is invariant under $\pi_1(M)$, so it defines a Riemannian metric on $M$. This metric must be complete since $M$ is closed, so $g$ is also complete. The map $\dev$ is a local isometry between complete Riemannian manifolds, so it is a covering map. Since $\O$ is simply connected, the covering map is a diffeomorphism onto its image and $M$ is a quotient of $\O$ by a cocompact lattice of~$\Aut(\O)$. 

Since $\O$ is equivariantly diffeomorphic to  
 $\Rf \times X_{L_s}$, the manifold $M$ is a quotient of~$\Rf \times X_{L_s}$ by a cocompact lattice of $\Iso(\Rf) \times L_s = (\Rf \rtimes (\mathbb{Z}/2\mathbb{Z})) \times L_s$. It is a classical fact that the cocompact lattices of a product $\mathbb{R} \times G$, where $G$ is a simple Lie group, are virtually products of a cocompact lattice of $\mathbb{R}$ by a cocompact lattice of $G$. This fact applied to $\mathbb{R} \times L_s$ completes the proof of the corollary. $\qed$
\end{proof}

\bibliographystyle{alpha}
\bibliography{bibliography}
\vspace{1cm}
\small\noindent \textsc{Institut des Hautes Etudes Scientifiques,
35 rte de Chartres, 91440 Bures-sur-Yvette, France.} \emph{email address:} \texttt{galiay@ihes.fr}

\end{document}